\documentclass[reqno]{amsart}
\usepackage{epsfig}
\usepackage{amsmath, amsfonts }
\usepackage{amssymb}
\usepackage{amscd}
\usepackage{eufrak}
\usepackage{amssymb}
\usepackage{latexsym}
\usepackage[all]{xy}
\author{Ndouné Ndouné  }
\title{Cluster algebras Arising from the infinity-gon}

\frenchspacing

\newtheorem{theo}{Theorem}[section]
\newtheorem{prop}{Proposition}[section]
\newtheorem{definition}{Definition}[section]
\newtheorem{expl}{Example}[section]

\newtheorem{lem}{Lemma}[section]
\newtheorem{cor}{Corollary}[section]
\newtheorem{rem}{Remark}[section]
\usepackage{graphicx}
\usepackage{t1enc}

\begin{document}
\maketitle

\begin{abstract}
We introduce a handy construction of cluster algebras of type $\mathbb{A}_{\infty}$,
we give a complete classification of the cluster algebras arising from the infinity-gon, and finally we construct the category of the diagonals
of the infinity-gon and show that it is triangle-equivalent to the infinite cluster category of type
$\mathbb{A}_{\infty}$ described by Holm and J{\o}rgensen.

\end{abstract}

\maketitle

\section{Introduction}
Since their invention by Fomin and Zelevinsky in [\ref{fz}] cluster algebras have found applications in various areas of
 mathematics like Lie theory, quiver representations, Calabi-Yau algebras, Teichmüller theory, tropical geometry, Poisson
geometry, integrable systems, combinatorics and  mathematical physics see [\ref{pal}], [\ref{dup}], [\ref{geshva}],
 [\ref{fogo}].

              A cluster algebra is a commutative ring with a distinguished set of generators, called $cluster$ $variables$. The set of all cluster variables
 is constructed recursively from an initial set using a procedure
called $mutation$. Generators are organized into clusters and each cluster contains
exactly $n$ clusters variables. The study of cluster structures for 2-Calabi-Yau categories in [\ref{birs}] led however to the
introduction of cluster algebras with countable clusters.
 The infinite cluster category $\mathcal{D}$ of Dynkin type $\mathbb{A}_{\infty}$ was
constructed by P. Jorgensen in [\ref{pj}].
The cluster tilting subcategories of $\mathcal{D}$ were classified by using the triangulations of infinity-gon in [\ref{holjor}].
The Caldero-Chapoton map has been introduced in [\ref{cc}] and [\ref{ck}] by Caldero, Chapoton and Keller to formalize the
connection between the Fomin-Zelevinsky cluster algebras and the cluster category of Buan, Marsh, Reineke, Reiten and Todorov. The analogue of this map
was constructed between infinite cluster algebras of type $\mathbb{A}_{\infty}$ and the infinite cluster category $\mathcal{D}$ by
Jorgensen and Palu, see [\ref{jorpal}].

This paper is devoted to the study of cluster algebras of type $\mathbb{A}_{\infty}$, cluster algebras arising from the  triangulations and the categorification of the infinity-gon.
In this paper we first give a handy construction of each cluster algebra $\mathcal{B}$ of type $\mathbb{A}_{\infty}.$
More specifically we prove that the cluster algebra $\mathcal{B}$ is a particular subalgebra of the projective limit algebra $\mathcal{A}$ of a particular projective system $(\mathcal{A}^{i},p_{i,j})_{i,j\geq 1}$, where $\mathcal{A}^{n}$ is a cluster algebra of type $\mathbb{A}_n$, and $p_{i,j}:\mathcal{A}^{j}\longrightarrow \mathcal{A}^{i}$ is a surjective cluster morphism in the sense of Assem, Dupont and Schiffler.

\begin{theo}
The cluster algebra $\mathcal{B}$ is the proper $\mathbb{Z}$-subalgebra of the algebra $\mathcal{A}=\underset{\longleftarrow}{\lim}\mathcal{A}^{n}$, consisting of the
ultimately constant elements of $\mathcal{A}$.
\end{theo}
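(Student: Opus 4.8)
The plan is to realize $\mathcal{B}$ explicitly as a subalgebra of $\mathcal{A}$ through a single natural monomorphism, and then to match its image with the ultimately constant elements. First I would record the structural input coming from the handy construction above: each surjective cluster morphism $p_{i,j}\colon\mathcal{A}^{j}\to\mathcal{A}^{i}$ collapses exactly the cluster variables attached to the diagonals that are ``born'' between stage $i$ and stage $j$, while fixing the common core; consequently it admits a canonical section $s_{j,i}\colon\mathcal{A}^{i}\hookrightarrow\mathcal{A}^{j}$ exhibiting $\mathcal{A}^{i}$ as the sub cluster algebra of $\mathcal{A}^{j}$ generated by the surviving cluster variables. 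These sections are compatible, $s_{k,j}\,s_{j,i}=s_{k,i}$ and $p_{i,j}\,s_{j,i}=\mathrm{id}$, and under the resulting inclusions the diagonals of the finite polygons exhaust the diagonals of the infinity-gon, which identifies $\bigcup_{n}\mathcal{A}^{n}$ with $\mathcal{B}$.

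Next I would define the comparison map $\Phi\colon\mathcal{B}\to\mathcal{A}$. Any $b\in\mathcal{B}$ is a $\mathbb{Z}$-linear combination of finitely many monomials in finitely many cluster variables, so there is a least $N$ with $b\in\mathcal{A}^{N}$; set $\Phi(b)_{n}=p_{n,N}(b)$ for $n\le N$ and $\Phi(b)_{n}=s_{n,N}(b)$ for $n\ge N$. The compatibility relations above guarantee that $(\Phi(b)_{n})_{n}$ is a coherent family, i.e.\ an element of the projective limit, and that $\Phi$ is a homomorphism of $\mathbb{Z}$-algebras. Injectivity is immediate, since $\Phi(b)_{N}=b$; hence $\Phi$ identifies $\mathcal{B}$ with a $\mathbb{Z}$-subalgebra of $\mathcal{A}$.

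I would then pin down the image. By construction every $\Phi(b)$ stabilizes: for $n\ge N$ its $n$-th component is $s_{n,N}(b)$, so it is ultimately constant. Conversely, if $(a_{n})\in\mathcal{A}$ is ultimately constant, say $a_{n}=s_{n,N}(a_{N})$ for all $n\ge N$, then $b:=a_{N}\in\mathcal{A}^{N}\subseteq\mathcal{B}$ satisfies $\Phi(b)=(a_{n})$, the lower components matching because $a_{n}=p_{n,N}(a_{N})$ holds automatically in the limit. Thus $\Phi(\mathcal{B})$ is exactly the set of ultimately constant elements. To see the inclusion is proper, I would exhibit a coherent family that never stabilizes: writing $y_{k}$ for a cluster variable born at stage $k$, so that $p_{n,n+1}(y_{n+1})=1$, the family $a_{n}=\prod_{k\le n} y_{k}$ satisfies $p_{n,n+1}(a_{n+1})=a_{n}$ while $a_{n+1}\neq s_{n+1,n}(a_{n})$, whence it lies in $\mathcal{A}\setminus\Phi(\mathcal{B})$.

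I expect the main obstacle to be the first paragraph: verifying that the Assem--Dupont--Schiffler cluster morphisms $p_{i,j}$ really do act as ``forget the newly born diagonals, keep the core'' and admit the compatible sections $s_{j,i}$, since this is precisely what makes ``ultimately constant'' well defined and simultaneously makes $\Phi$ an algebra map. Once the sections and the identification $\bigcup_{n}\mathcal{A}^{n}=\mathcal{B}$ are in hand, the remaining steps are bookkeeping with the universal property of the projective limit.
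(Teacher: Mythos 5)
Your construction stands or falls on the first paragraph, and that is where it fails: the compatible $\mathbb{Z}$-algebra sections $s_{j,i}\colon\mathcal{A}^{i}\hookrightarrow\mathcal{A}^{j}$ do not exist, because $\mathcal{A}^{i}$ is not a subalgebra of $\mathcal{A}^{j}$. Already for $i=1$, $j=2$: the paper computes $\mathcal{A}^{1}=\mathbb{Z}[x_{1},\tfrac{2}{x_{1}}]$, and $\tfrac{2}{x_{1}}\notin\mathcal{A}^{2}$ (specializing $x_{2}=-1$ sends every generator of $\mathcal{A}^{2}$ into $\mathbb{Z}[x_{1}]$, but fixes $\tfrac{2}{x_{1}}$). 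Worse, no algebra section $s_{2,1}$ can exist at all: it would have to send the unit $\tfrac{2}{x_{1}}$ of $\mathcal{A}^{1}$ to an element $u$ with $s_{2,1}(x_{1})\cdot u=2$, i.e.\ exhibit $2$ as a product of units of $\mathbb{Z}[x_{1}^{\pm1},x_{2}^{\pm1}]$, which is impossible. The underlying geometric intuition is also off: $p_{i,j}$ is not ``forget the new diagonals, keep the core''; the cluster variables of $\mathcal{A}^{n}$ produced by mutating near the boundary vertex $n$ (e.g.\ $\tfrac{1+x_{n-1}}{x_{n}}$) are genuinely different from the corresponding variables of $\mathcal{A}^{n+1}$ and of $\mathcal{B}$ (where the exchange gives $\tfrac{x_{n+1}+x_{n-1}}{x_{n}}$). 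Consequently the algebras are not nested, $\bigcup_{n}\mathcal{A}^{n}\neq\mathcal{B}$, and your surjectivity step ``$b:=a_{N}\in\mathcal{A}^{N}\subseteq\mathcal{B}$'' uses a false inclusion; likewise ``ultimately constant'' cannot be rephrased as $a_{n}=s_{n,N}(a_{N})$.

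The paper avoids this trap by never trying to embed a whole finite-type algebra $\mathcal{A}^{N}$ into the tower. It defines the comparison map only on the initial cluster, sending $x_{i}$ to the ultimately constant tuple $y_{i}=(1,\dots,1,x_{i},x_{i},\dots)$, and extends along mutations of the infinite seed $(Y,Q)$; a cluster variable of $\mathcal{B}$ involves only finitely many $x_{i}$ and its exchange relations eventually stabilize, so its image has a constant tail without any appeal to sections. What survives of your proposal is the correct observation that for $b$ in (the image of) $\mathcal{B}$ one can take $\Phi(b)_{n}=b$ for $n\gg0$ and $\Phi(b)_{n}=p_{n,N}(b)$ below, and the non-stabilizing family $a_{n}=x_{1}\cdots x_{n}$ witnessing properness, which is exactly the paper's example. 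To repair the argument you would need to (i) prove that every cluster variable of $\mathcal{B}$ lies in $\mathcal{A}^{n}$ for all large $n$ (this is essentially the stability analysis in the proof of Proposition~2.1), and (ii) give an actual argument that every ultimately constant element of $\mathcal{A}$ lies in the image of $\mathcal{B}$ --- a point which, it must be said, the paper itself asserts rather than proves.
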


We next continue the study of cluster algebras arising from the infinity-gon started by Grabowski and Gratz in [\ref{gs}]. In this study, we associate to each triangulation $T$ of the infinity-gon, the cluster algebra
$\mathcal{A}(T)$ as done by Fomin, Shapiro and Thurston in [\ref{fst}] for the case of the marked surfaces with a finite marked points. For the case of marked surfaces with a finite number of marked points, Fomin, Shapiro and Thurston have shown that the cluster algebra associated to a triangulated surface does not depend upon the choice of triangulation. We shall show that this result does not hold for cluster algebras arising from the triangulations of the infinity-gon. However, we give a complete classification of the clusters algebras arising from the infinity-gon using the notion of congruence between triangulations inside the set of all triangulations. In [\ref{fz1}] Fomin and Zelevinsky have considered the notion
of $strong$ $isomorphisms$, by which they mean an isomorphism of the cluster algebras which
maps cluster to cluster.

\begin{theo}
Let $T$ and $T'$ be two triangulations of the infinity-gon, and $\mathcal{A}(T)$ and $\mathcal{A}(T')$ the associated cluster algebras.
Then $T$ and $T'$ are congruent if and only if the clusters algebras $\mathcal{A}(T)$ and $\mathcal{A}(T')$ are strongly isomorphic.
\end{theo}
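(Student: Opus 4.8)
The plan is to prove the two implications separately, using throughout that a strong isomorphism, by definition, carries clusters to clusters and therefore intertwines the mutation operation. Recall that the seed attached to a triangulation $T$ has as its cluster the family of variables indexed by the arcs (diagonals) of $T$, and as its exchange data the quiver read off from the triangles of $T$, with mutation at an arc corresponding to flipping that arc. Thus $\mathcal{A}(T)$, together with its exchange graph and the quiver labelling each vertex, is a faithful algebraic shadow of the flip combinatorics of $T$, and it is of type $\mathbb{A}_{\infty}$, so the projective-limit description given in the first theorem above applies to it. Both directions will amount to translating between the geometric data (arcs, crossings, flips, and their behaviour at infinity) and this algebraic data.

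\textbf{Forward direction} ($T$ and $T'$ congruent $\Longrightarrow$ $\mathcal{A}(T)\cong\mathcal{A}(T')$ strongly). A congruence between $T$ and $T'$ supplies a bijection $\beta$ between the arcs of $T$ and the arcs of $T'$ that preserves non-crossing and the adjacency of triangles, hence induces an isomorphism of the associated initial quivers. I would use $\beta$ to define $\phi(x_\gamma)=x_{\beta(\gamma)}$ on the initial cluster variables, extend it to a $\mathbb{Z}$-algebra homomorphism $\phi:\mathcal{A}(T)\to\mathcal{A}(T')$, and then verify by induction on the mutation distance from the initial seed that $\phi$ commutes with mutation. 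The base case is precisely the compatibility of $\beta$ with the initial quivers; the inductive step uses that $\beta$ transports each exchange relation of $T$ to the corresponding exchange relation of $T'$ because flips are carried to flips. Since $\phi$ sends the initial cluster to the initial cluster and respects mutation, it sends every cluster to a cluster, and the same construction applied to $\beta^{-1}$ yields a two-sided inverse; hence $\phi$ is a strong isomorphism.

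\textbf{Backward direction} ($\mathcal{A}(T)\cong\mathcal{A}(T')$ strongly $\Longrightarrow$ $T$ and $T'$ congruent). Let $\phi:\mathcal{A}(T)\to\mathcal{A}(T')$ be a strong isomorphism. Because $\phi$ maps clusters to clusters compatibly with mutation, it induces an isomorphism of exchange graphs preserving, at each vertex, the quiver up to the relabelling coming from $\phi$. I would transport the initial cluster of $T$ through $\phi$; it lands in the cluster of some triangulation $T''$ that is flip-equivalent to $T'$. The resulting bijection of arcs $T\to T''$ preserves compatibility and triangle-adjacency, so it is an isomorphism of flip complexes, and since $T''$ and $T'$ are flip-equivalent they are congruent. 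It then remains to promote the abstract isomorphism of arc-complexes $T\to T''$ to a genuine congruence: one reconstructs the linear/cyclic order on the vertices from the combinatorics of the arcs and checks that the bijection is realised by an order-compatible map of the vertex set.

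The main obstacle is exactly this reconstruction step in the backward direction, and specifically its behaviour at infinity. For a finite polygon the flip complex rigidly determines the polygon, but the examples the paper uses to show that the Fomin--Shapiro--Thurston independence fails exhibit non-congruent triangulations whose finite portions are isomorphic, the discrepancy living entirely in the asymptotic (fountain) structure. Hence the heart of the argument is to show that a strong isomorphism cannot disturb this asymptotic structure: the countable families of arcs accumulating at a limit point must be detected algebraically—for instance through the ultimately constant elements supplied by the projective-limit description of the first theorem, or through which infinite families of cluster variables can simultaneously be completed to a cluster—so that $\phi$ is forced to respect them. Making this detection precise, and verifying that the recovered vertex bijection is continuous at the points at infinity, is the crux; the remaining verifications then reduce to the known finite-type rigidity applied to an exhaustion of $T$ by finite sub-polygons.
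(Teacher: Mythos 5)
Your forward direction is essentially the paper's: from the admissible bijection $\theta$ one sets $\varphi_\theta(x_\gamma)=u_{\theta(\gamma)}$, checks that this carries the initial cluster of $T$ to that of $T'$ and intertwines mutation with flips, and extends to a $\mathbb{Z}$-algebra isomorphism. That half is fine and matches the paper.

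The backward direction contains a genuine gap: you correctly locate the difficulty in the asymptotic (fountain) structure, but you do not supply the argument --- you only assert that this structure ``must be detected algebraically'' and that making the detection precise ``is the crux.'' An identified obstacle is not a proof. Moreover you are aiming at a stronger target than the paper's definition of congruence requires: here a congruence is merely a flip-preserving bijection of the set of all arcs $\overline{\mathbb{S}}$ carrying $T$ to $T'$; no order-compatible map of the vertex set is demanded, so your proposed reconstruction of the linear order on the vertices is both harder than necessary and left unexecuted. The paper closes the implication by a different and more economical route: it first classifies triangulations of the infinity-gon into types $(I)$, $(II)$, $(III)_k$ according to the number of connected components (classes of arcs related by finite sequences of flips) and the presence and span $k$ of a frozen arc, and proves (Proposition 3.2) that two triangulations are congruent if and only if they have the same type. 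The backward implication then reduces to showing that a strong isomorphism preserves the type, which is done by a four-case contrapositive: a strong isomorphism preserves the relation ``related by a finite sequence of mutations'' (hence the partition into connected components), sends frozen variables to frozen variables, and preserves the cardinality of the finite component bounded by the frozen arc. Without this type classification, or an equivalent algebraically detectable invariant, your backward direction remains an outline rather than a proof.
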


We also define the category of diagonals of the infinity-gon as the one of the ($n$+3)-gon constructed by Caldero, Chapoton and Schiffler in [\ref{ccs}]. It is well-known, that the category of diagonals of the ($n$+3)-gon is equivalent to the cluster category of Buan, Marsh, Reineke, Reiten, and Todorov [\ref{bmrrt}] for the quiver of type $\mathbb{A}_{n}$. Here we show that the category of diagonals of the infinity-gon $\mathcal{C}$ is equivalent to the infinite cluster category $\mathcal{D}$ of type $\mathbb{A}_{\infty}$  of J{\o}rgensen [\ref{pj}]. Note that the category $\mathcal{C}$ appears implicitly in [\ref{pj}].

\begin{theo}
The categories $\mathcal{C}$ and $\mathcal{D}$ are triangle-equivalent.
\end{theo}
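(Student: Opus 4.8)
The plan is to exhibit an explicit functor $F\colon\mathcal{C}\to\mathcal{D}$, to check first that it is an equivalence of additive categories, and then to promote it to a triangle functor. Both $\mathcal{C}$ and $\mathcal{D}$ are Krull--Schmidt, and by construction their indecomposable objects are parametrized by the same combinatorial data, namely the diagonals of the infinity-gon, i.e. the arcs $\{i,j\}$ with $i,j\in\mathbb{Z}$ and $|i-j|\geq 2$. I would therefore define $F$ on indecomposables by sending a diagonal, viewed as an object of $\mathcal{C}$, to the corresponding indecomposable object of $\mathcal{D}$, and extend additively to finite direct sums. Density (essential surjectivity) is then immediate from the bijection on indecomposables together with additivity and the Krull--Schmidt property.

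The first substantial step is full faithfulness. Since both Hom-bifunctors are additive, it suffices to compare $\operatorname{Hom}_{\mathcal{C}}(X,Y)$ with $\operatorname{Hom}_{\mathcal{D}}(FX,FY)$ for indecomposable $X,Y$, that is, for pairs of diagonals. On the $\mathcal{C}$ side these morphism spaces are computed from the combinatorial (mesh) description of the category of diagonals, exactly as in the $(n+3)$-gon case of Caldero--Chapoton--Schiffler; on the $\mathcal{D}$ side they are given by J{\o}rgensen's explicit computation of the Hom-spaces between the indecomposables of the infinite cluster category. I would show that in both categories a nonzero morphism between two diagonals exists precisely under the same crossing/pivoting condition, that each such Hom-space is one-dimensional, and that $F$ carries the generating elementary morphisms of $\mathcal{C}$ to a compatible system of generators in $\mathcal{D}$, so that composition is respected. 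This reduces full faithfulness to a local check around each mesh, identical to the polygon case but now carried out uniformly over the infinite Auslander--Reiten quiver, which here has the shape $\mathbb{Z}\mathbb{A}_{\infty}$.

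It remains to make $F$ into a triangle functor. First I would identify the suspensions: the shift $[1]$ of $\mathcal{D}$ acts on indecomposables as a combinatorial rotation of diagonals, and I would verify that the analogous rotation on $\mathcal{C}$ intertwines with $F$, producing a natural isomorphism $F\circ[1]\cong[1]\circ F$. Next I would show that $F$ sends the Auslander--Reiten triangles of $\mathcal{C}$ to those of $\mathcal{D}$: the meshes of the two AR quivers coincide under the bijection on vertices, and a suspension-commuting additive equivalence matching AR triangles sends distinguished triangles to distinguished triangles, since in these algebraic, $2$-Calabi--Yau, Krull--Schmidt categories the triangulated structure is determined by the suspension together with the AR triangles. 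Equivalently, one may transport the triangulation of $\mathcal{D}$ along the additive equivalence $F$ and verify that the transported structure agrees with the intended one on $\mathcal{C}$ by comparing the mapping cones of the elementary morphisms.

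The main obstacle is precisely this last passage from an additive equivalence to a triangle equivalence. In the finite polygon case the cluster category is an orbit category whose triangulated structure is supplied by Keller's theorem, whereas here $\mathcal{D}$ is constructed directly and $\mathcal{C}$ carries only a combinatorial mesh structure a priori; one must therefore either realize $\mathcal{C}$ inside an algebraic model compatible with J{\o}rgensen's, or argue the uniqueness of the triangulated structure from the AR data. A secondary difficulty is the non-compactness of the infinity-gon: one must control the behaviour of morphisms and cones at the two ends, ensuring that no new limit morphisms appear and that every distinguished triangle of $\mathcal{D}$ is built, in the appropriate local sense, from the meshes that $F$ already matches.
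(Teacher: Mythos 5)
Your proposal follows essentially the same route as the paper: the functor is defined on indecomposables by the standard-coordinate bijection $(m,n)\mapsto\Sigma^{-n}X_{n-m-2}$, essential surjectivity is immediate from this bijection, and full faithfulness is obtained by matching the one-dimensional Hom-spaces under the same crossing condition and checking compositions via Holm--J{\o}rgensen's forward/backward morphism calculus. The ``main obstacle'' you identify, promoting the additive equivalence to a triangle equivalence, is resolved in the paper by exactly the first option you mention, namely transporting the triangulated structure of $\mathcal{D}$ to $\mathcal{C}$ along $F$ (since $\mathcal{C}$ is defined only as a $K$-linear combinatorial category, there is no prior triangulation on it to compare against).
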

As a consequence of this result, we give a description of the Auslander-Reiten triangles in geometric terms inspired by [\ref{bruzh}].
 Our paper is
organized as follows.

        In section 2, we introduce a special projective system of clusters algebras of type $\mathbb{A}_n$ and give the relation between the projective limit
of this system and the corresponding cluster algebra of type $\mathbb{A}_{\infty}$; this gives rise to a handy construction of clusters algebras of type $\mathbb{A}_{\infty}$. This construction is handy because
the algebra $\mathcal{B}$ is expressed as a classical sub-algebra of a finite product of $\mathbb{Z}$-algebras.

        In section 3, we give a complete classification of the cluster algebras arising from the infinity-gon.

         Finally, in section 4, we construct the category $\mathcal{C}$ of diagonals of the infinity-gon and show that it is triangle-equivalent to
the category $\mathcal{D}$ . Therefore, inspired by [\ref{bruzh}], and we give a description of Auslander-Reiten triangles of $\mathcal{C}$ using the diagonals of the infinity-gon.

\begin{center}ACKNOWLEDGMENTS
\end{center}
        I would like to thank my supervisors Ibrahim Assem and Vasilisa Shramchenko  for their patience and availability.
I thank Grégoire Dupont for interesting and helpful discussions, David Smith for valuable remarks. I also would like to thank Yann Palu and Peter J{\o}rgensen
for some clarifications of the infinite cluster category of type $\mathbb{A}_{\infty}.$

\section{Cluster algebras of type $\mathbb{A}_{\infty}$}

\subsection{Basic construction}

We recall that a $quiver$ is a quadruple $Q=(Q_0,Q_1,s,t)$ consisting of two sets,
$Q_0$(whose elements are called $points$) and $Q_1$(whose elements are called $arrows$)
and two functions $s,t:Q_1\longrightarrow Q_0$ associating to each arrow $\alpha\in Q_1$
its so-called $source$ $s(\alpha)$ and $target$ $t(\alpha).$ If $i=s(\alpha)$ and $j=t(\alpha),$
we denote this situation by $i\overset{\alpha}{\longrightarrow}
 j$. Given a point $i$, we set  $i^{+} = \{\alpha\in Q_1| s(\alpha)=i\}$ and
 $i^{-}=\{\alpha\in Q_1| t(\alpha)=i\}$. We say that a quiver $Q$ is $locally$ $finite$ if
 for each $i\in Q_0$, the sets $i^{+}$ and $i^{-}$ are finite.

             Let $Q$ be a countably infinite, but locally finite quiver without cycles of length at most two,
  and let
   $X = \{x_{n}| n\geq 1\}$ be a countable set of undeterminates.  where we agree that the point
$i$ of the quiver $Q$ corresponds to the variable $x_i$. We define the mutation $\mu_k$ in $k\in Q_0$ exactly as in the case of a finite
quiver, that is $\mu_{k}(X,Q)=(X^{'},Q^{'})$, where $Q^{'}$ is the quiver obtained from $Q$
by performing the following operations:\\
- for any path of $i\longrightarrow j\longrightarrow k$ of length two having $k$ as midpoint,
we insert a new arrow $i\longrightarrow j.$\\
- all arrows incident to the point $k$ are reversed, \\
- all newly occurring cycles of length two are deleted.\\
Clearly, $Q^{'}$ is still locally finite.

             On the other hand, $X^{'}$ is a countable set of variables defined as follows:\\
$X^{'}=(X\setminus \{x_k\})\cup \{x_{k}^{'}\}$ where $x_{k}^{'}$ is obtained from $X$ by the
so-called $exchange$ $relation$ $$x_{k}x_{k}^{'}=\underset{\alpha\in i^{+}} \prod x_{t(\alpha)}+
\underset{\alpha\in i^{-}} \prod x_{s(\alpha)}.$$  These operations are performed inside the
field $\mathcal{F}=\mathbb{Q}(X)$ of rational functions over the undeterminates $x_{n}$, called the $ambient$ $field$.
One verifies exactly as in the case of a finite quiver that $\mu_{k}^{2}(X,Q)=(X,Q).$

      From now on, let $Q$ be a quiver having as underlying graph the infinite tree $\mathbb{A}_{\infty}$

    $$
\xymatrix @=20pt
{
&1\ar@{-}[r]
&2\ar@{-}[r]
&3\ar@{-}[r]
&...\ar@{-}[r]
&n-1\ar@{-}[r]
& n\ar@{-}[r]
&...
}
$$
and let $X = \{x_{n}| n\in\Bbb{N}\}$ be a countable set of undeterminates.
          Each pair $(X^{'},Q^{'})$ obtained from $(X,Q)$ by a finite sequence of mutations is
called a $seed$, and the set $X^{'}$ is called a $cluster$. The elements of $X^{'}$ are called $cluster$
$variables$. The pair $(X,Q)$ is called the $initial$ $seed$, and $X$ is called the $initial$ $cluster$.
\begin{definition}
The cluster algebra $\mathcal{B}=\mathcal{A}(X,Q)$ is the $\mathbb{Z}$-subalgebra of $\mathcal{F}$
generated by the set $\mathcal{X}$ which is the union of all possible sets of variables obtained from
$X$
by finite sequences of mutations.\end{definition}

            Gekhtman, Shapiro and Vainshtein showed in [\ref{geshva}] that for every seed $(\tilde{X},\tilde{Q})$
of a given cluster algebra, the quiver $\tilde{Q}$ is uniquely defined by the cluster $\tilde{X}$. Because mutation is a local
operation, this result remains true for the countable seeds above.
Our objective in this first section is to give a handy construction of the cluster algebra
$\mathcal{B}=\mathcal{A}(X,Q)$.

\subsection{A projective system of cluster algebras}

In this subsection we denote by $\overrightarrow{\mathbb{A}}_{n}$
the linearly oriented quiver of type $\mathbb{A}_{n}$, $1\longrightarrow 2\longrightarrow 3\longrightarrow
\cdot \cdot \cdot \longrightarrow n$
and by $X^{n} = \{x_{1},x_{2},..., x_{n}\}$ an associated set of variables. Let
$\mathcal{F}^{n}=\mathbb{Q}(x_{1},x_{2}, ... , x_{n})$ be the field of rational functions on the $x_{i}$ for $1\leq i\leq n$ (with rational coefficients)
and $\mathcal{X}^{n}$ be the union of all possible sets
of variables obtained from $X^{n}$ by successive mutations. This data defines a cluster algebra
$\mathcal{A}^{n}=\mathcal{A}(X^{n},\overrightarrow{\mathbb{A}}_{n})$ having $(X^{n},\overrightarrow{\mathbb{A}}_{n})$ as initial seed. We recall that
the Laurent phenomenon asserts that each cluster variable in $\mathcal{A}^{n}$ can be expressed as a
Laurent polynomial in the $x_i$, with $1\leq i\leq n$, that is, such a variable is of the form

\begin{displaymath} \frac{p(x_{1},x_{2}, ... , x_{n})}{\prod\limits_{l=1}^{n} x_{l}^{d_l}}
 \end{displaymath}
 where $p\in \mathbb{Z}[x_{1},x_{2}, ..., x_{n}]$ and $d_{l}\geq 0$ for all $l$, with $1\leq l \leq n$. The $positivity$ $theorem$ asserts that
 all coefficients of the polynomial $P$ are non-negative integers. The positivity theorem holds because the cluster algebra $\mathcal{A}^{n}$ is of type
 $\mathbb{A}_{n}$.

        Now let $i,j$ be positive integers with $i\leq j,$ we define the map
 $p_{i,j}:$ $\mathcal{A}^{j}\longrightarrow \mathcal{A}^{i}$
 on the generators of $\mathcal{A}^{j}$ as follows

\begin{displaymath} p_{i,j}\left(\frac{p(x_{1},x_{2}, ... , x_{i},x_{i+1},...,x_{j})}
{\prod\limits_{l=1}^{j} x_{l}^{d_l}}\right)
=\frac{p(x_{1},x_{2}, ... , x_{i},1,...,1)}{\prod\limits_{l=1}^{i} x_{l}^{d_l}}
.\end{displaymath}
Since $p_{i,j}$ is an evaluation, it is a morphism of $\mathbb{Z}$-algebras.
Clearly, we have $p_{i,i}=id_{\mathcal{A}^{i}}$ and if $i< j$, $p_{i,j}=p_{i,i+1}p_{i+1,i+2}...p_{j-1,j}.$
Thus, if $i\leq j\leq k$, then $p_{i,j}p_{j,k}=p_{i,k}.$ Our first objective is to prove that, if $i\leq j$,
then $p_{i,j}$ is actually a surjective morphism from $\mathcal{A}^{j}$ to $\mathcal{A}^{i}.$

\begin{prop} With the above notation, $\mathcal{A}^{i}=\mathbf{Z}[p_{i,j}(\mathcal{X}^{j})].$ In particular,
$p_{i,j}$ is a surjective morphism of $\mathbb{Z}$-algebras from $\mathcal{A}^{j}$ to $\mathcal{A}^{i}.$

\end{prop}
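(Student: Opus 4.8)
The plan is to deduce both assertions from the single claim that every cluster variable of $\mathcal{A}^{i}$ is the image under $p_{i,j}$ of a cluster variable of $\mathcal{A}^{j}$, that is $\mathcal{X}^{i}\subseteq p_{i,j}(\mathcal{X}^{j})$. Granting this, since $\mathcal{A}^{j}=\mathbb{Z}[\mathcal{X}^{j}]$ and $p_{i,j}$ is a morphism of $\mathbb{Z}$-algebras with image in $\mathcal{A}^{i}$, one obtains $\mathcal{A}^{i}=\mathbb{Z}[\mathcal{X}^{i}]\subseteq\mathbb{Z}[p_{i,j}(\mathcal{X}^{j})]=p_{i,j}(\mathcal{A}^{j})\subseteq\mathcal{A}^{i}$, whence equality and surjectivity at once. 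Using $p_{i,j}=p_{i,i+1}p_{i+1,i+2}\cdots p_{j-1,j}$, it suffices to establish the one-step lifting $\mathcal{X}^{i}\subseteq p_{i,i+1}(\mathcal{X}^{i+1})$: a variable $v\in\mathcal{X}^{i}$ then lifts successively to cluster variables of $\mathcal{A}^{i+1},\mathcal{A}^{i+2},\dots,\mathcal{A}^{j}$, and composing the $p$'s returns $v$.

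For the one-step lifting I would exploit that the quiver $\overrightarrow{\mathbb{A}}_{i+1}$ restricts on the vertices $\{1,\dots,i\}$ to $\overrightarrow{\mathbb{A}}_{i}$, the extra vertex $i+1$ being attached as a sink by the arrow $i\to i+1$. Given $v\in\mathcal{X}^{i}$, choose a mutation sequence $\mathbf{k}=(k_{1},\dots,k_{r})$ with all $k_{\ell}\in\{1,\dots,i\}$ producing $v$ at some vertex $m$ of the seed $\mu_{k_{r}}\cdots\mu_{k_{1}}(X^{i},\overrightarrow{\mathbb{A}}_{i})$, and apply the \emph{same} sequence, which never touches $i+1$, to the initial seed $(X^{i+1},\overrightarrow{\mathbb{A}}_{i+1})$. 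Writing $w$ for the resulting variable at vertex $m$, I claim $p_{i,i+1}(w)=v$. The proof is an induction on $r$ carried by two invariants: the variable sitting at vertex $i+1$ is never mutated and hence stays equal to $x_{i+1}$, so $p_{i,i+1}$ sends it to $1$; and at each stage the full subquiver on $\{1,\dots,i\}$ of the mutated $\overrightarrow{\mathbb{A}}_{i+1}$-seed coincides with the corresponding mutated $\overrightarrow{\mathbb{A}}_{i}$-quiver.

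The inductive step is where the evaluation does its work. When one mutates at $k\le i$, the exchange relation in $\mathcal{A}^{i+1}$ reads $w_{k}w_{k}'=\prod_{k\to c}w_{c}+\prod_{c\to k}w_{c}$, where $c$ runs over the neighbours of $k$ and may include $i+1$, while the corresponding relation in $\mathcal{A}^{i}$ involves only the neighbours $c\le i$. Applying $p_{i,i+1}$ and using the inductive hypotheses $p_{i,i+1}(w_{c})=v_{c}$ for $c\le i$ together with $p_{i,i+1}(w_{i+1})=1$, every factor coming from an arrow between $k$ and $i+1$ collapses to $1$, so the two exchange relations are identified and $p_{i,i+1}(w_{k}')=v_{k}'$ follows upon dividing by $v_{k}\neq 0$. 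The main obstacle, and the point demanding the most care, is precisely the bookkeeping around vertex $i+1$: one must check that mutating only at vertices $\le i$ alters the subquiver on $\{1,\dots,i\}$ exactly as the corresponding $\overrightarrow{\mathbb{A}}_{i}$-mutation does, no spurious arrow being created with midpoint $i+1$ since $i+1$ is never a mutation vertex, and that all surviving $x_{i+1}$-contributions to the exchange monomials indeed disappear under the specialization $x_{i+1}\mapsto 1$. Once this is verified, every $v\in\mathcal{X}^{i}$ is realized as $p_{i,i+1}(w)$ with $w\in\mathcal{X}^{i+1}$, which completes the one-step lifting and hence the Proposition.
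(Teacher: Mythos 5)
Your one-step lifting argument is sound: replaying an $\overrightarrow{\mathbb{A}}_{i}$-mutation sequence inside $(X^{i+1},\overrightarrow{\mathbb{A}}_{i+1})$, noting that the variable at the untouched sink stays $x_{i+1}$ and that the full subquiver on $\{1,\dots,i\}$ evolves exactly as in $\mathcal{A}^{i}$, does give $\mathcal{X}^{i}\subseteq p_{i,i+1}(\mathcal{X}^{i+1})$. This is essentially the same observation the paper packages as the ``stability'' of the points $y_{i,j}$ with $i+j\leq n-1$ in the Auslander--Reiten quiver, and it yields the inclusion $\mathcal{A}^{i}\subseteq \mathbb{Z}[p_{i,j}(\mathcal{X}^{j})]$, hence the surjectivity claim.

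The gap is in the other inclusion. Your chain of containments ends with ``$\mathbb{Z}[p_{i,j}(\mathcal{X}^{j})]=p_{i,j}(\mathcal{A}^{j})\subseteq\mathcal{A}^{i}$'', and that last step is exactly what is \emph{not} automatic: $p_{i,j}$ is a priori only an evaluation map into the Laurent polynomial ring $\mathbb{Z}[x_{1}^{\pm1},\dots,x_{i}^{\pm1}]$, which is strictly larger than $\mathcal{A}^{i}$ (already $\mathcal{A}^{1}=\mathbb{Z}[x_{1},2/x_{1}]$ does not contain $x_{1}^{-1}$), so one must check that every cluster variable of $\mathcal{A}^{j}$ specializes into $\mathcal{A}^{i}$. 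Your lifting argument only reaches the cluster variables of $\mathcal{A}^{i+1}$ obtained by mutation sequences that avoid the vertex $i+1$; it says nothing about the variables that require mutating there. Those are precisely the variables the paper spends most of its proof on: in steps $(a)$, $(b)$, $(c)$ it computes $p_{n-1,n}(y_{i,j})$ for $i+j\geq n$ along the Auslander--Reiten quiver and finds the images $y'_{i,j}$, $1+y'_{1,n-1}$ and $y'_{0,k}+y'_{k,n-k}$ --- sums which do lie in $\mathcal{A}^{n-1}$ but are not cluster variables and whose membership in $\mathcal{A}^{n-1}$ is the nontrivial content of the equality. Without some version of this computation (or another argument that specialization of a single cluster variable to $1$ in type $\mathbb{A}$ lands in the smaller cluster algebra), the stated equality $\mathcal{A}^{i}=\mathbb{Z}[p_{i,j}(\mathcal{X}^{j})]$, and indeed the well-definedness of $p_{i,j}$ as a map into $\mathcal{A}^{i}$, remains unproved.
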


\begin{proof}
Because of the above equalities, it suffices to show that, for each $n\geq 2,$ we have
$\mathcal{A}^{n-1}=\mathbb{Z}[p_{n-1,n}(\mathcal{X}^{n})].$ This is done by induction on $n$.

           Assume first that $n=2.$ In this case $\mathcal{A}^{2}=\mathbb{Z}[x_{1},x_{2},\frac{1+x_{2}}{x_{1}},\frac{1+
{x_1}+x_{2}}{x_{1}x_{2}},\frac{1+x_{1}}{x_{2}}]$ while $\mathcal{A}^{1}=\mathbb{Z}[x_{1},\frac{2}{x_{1}}].$
The morphism $p_{1,2}:\mathcal{A}^{2}\longrightarrow \mathcal{A}^{1}$ is defined on the generators as follows:\\
$p_{1,2}(x_{1})=x_{1}$, $p_{1,2}(x_{2})=1$, $p_{1,2}(\frac{1+x_{2}}{x_{1}})=\frac{2}{x_{1}},$
$p_{1,2}(\frac{1+x_{1}+x_{2}}{x_{1}})=1+\frac{2}{x_{1}},$ and $p_{1,2}(\frac{1+x_{1}}{x_{2}})=1+x_{1}.$
Thus, clearly,  $\mathcal{A}^{1}=\mathbb{Z}[p_{1,2}(\mathcal{X}^{2})]$ so that $p_{1,2}:\mathcal{A}^{2}
\longrightarrow \mathcal{A}^{1}$ is a surjective morphism of $\mathbb{Z}$-algebras.

       We now assume that, for every $j<n,$ we have  $\mathcal{A}^{j-1}=\mathbb{Z}[p_{j-1,j}(\mathcal{X}^{j})]$ and
show that $\mathcal{A}^{n-1}=\mathbb{Z}[p_{n-1,n}(\mathcal{X}^{n})].$ For this purpose, we use the
categorification of the cluster algebras $\mathcal{A}^{n},$ and $\mathcal{A}^{n-1},$ as in [\ref{bmrrt}]. The Auslander-Reiten
quiver $\Gamma^{n}$ of the cluster category attached to $\mathcal{A}^{n}$ is of the form

\begin{center}
{\small
\[\xymatrix@R=6pt@C=1pt{
&&&&&&& &&&& \\
&& \ldots \ar[dr] & & . \ar[dr] & & y_{_{0,n}} \ar[dr] & & y_{_{1,n}} \ar[dr] & &
\cdot \ar[dr] & & \ldots && \\
&&& . \ar[dr] \ar[ur] & & ... \ar[dr] \ar[ur] & & y_{_{1,n-1}} \ar[dr] \ar[ur] & & y_{_{2,n-1}} \ar[dr] \ar@{.>}[ur] & &
\cdot \ar[dr] \ar[ur] \\
&& \ldots  \ar[ur] \ar[dr] & & y_{_{0,3}} \ar[ur] \ar[dr] & & ... \ar[ur] \ar[dr] & & . \ar[ur] \ar[dr]& &
\ar@{.>}[ur] \ar[dr] & &  \ldots \\
&&& y_{_{0,2}} \ar[ur] \ar[dr] & & y_{_{1,2}} \ar[ur] \ar[dr] & & ... \ar[ur] \ar[dr] & & y_{_{n-2,2}} \ar[ur] \ar[dr] & &
y_{_{n-1,2}} \ar@{.>}[ur] \ar[dr] \\
&& y_{_{0,1}} \ar[ur] & & y_{_{1,1}} \ar[ur] & & y_{_{2,1}} \ar[ur] & & ... \ar[ur] & & y_{_{n-1,1}} \ar[ur] & & y_{_{n,1}} \\
}\]
}
\end{center}
where we agree to identify each point of $\Gamma^{n}$ with the corresponding cluster variable and $y_{0,i}=x_{i}$
for each $i$ such that $1\leq i\leq n;$ and we denote by $y_{i,j}$ with $0\leq i\leq n$, $1\leq j\leq n$ and $i+j\leq n+1$,
the clusters variables of $\mathcal{A}^{n}.$ Because the quiver $\overrightarrow{\mathbb{A}}_{n}$ is of Dynkin type $\mathbb{A}_{n}$,
the cluster algebra $\mathcal{A}^{n}$ is of finite type and the
Auslander-Reiten quiver $\Gamma^{n}$ lies on a Moebius strip. Thus $y_{0,i}=y_{i,s}$, where $i+s=n+1$ and $1\leq i,s\leq n$.

        We denote by $\Gamma^{n-1}$ the Auslander-Reiten quiver of the cluster category attached to $\mathcal{A}^{n-1}.$ It
 is of the form

\begin{center}
{\small
\[\xymatrix@R=6pt@C=1pt{
&&&&&&& &&&& \\
&& \ldots \ar[dr] & & . \ar[dr] & & y'_{_{0,n-1}} \ar[dr] & & y'_{_{1,n-1}} \ar[dr] & &
\cdot \ar[dr] & & \ldots && \\
&&& . \ar[dr] \ar[ur] & & ... \ar[dr] \ar[ur] & & y'_{_{1,n-2}} \ar[dr] \ar[ur] & & y'_{_{2,n-2}} \ar[dr] \ar@{.>}[ur] & &
\cdot \ar[dr] \ar[ur] \\
&& \ldots  \ar[ur] \ar[dr] & & y'_{_{0,3}} \ar[ur] \ar[dr] & & ... \ar[ur] \ar[dr] & & . \ar[ur] \ar[dr]& &
\ar@{.>}[ur] \ar[dr] & &  \ldots \\
&&& y'_{_{0,2}} \ar[ur] \ar[dr] & & y'_{_{1,2}} \ar[ur] \ar[dr] & & ... \ar[ur] \ar[dr] & & y'_{_{n-3,2}} \ar[ur] \ar[dr] & &
y'_{_{n-2,2}} \ar@{.>}[ur] \ar[dr] \\
&& y'_{_{0,1}} \ar[ur] & & y'_{_{1,1}} \ar[ur] & & y'_{_{2,1}} \ar[ur] & & ... \ar[ur] & & y'_{_{n-2,1}} \ar[ur] & & y'_{_{n-1,1}} \\
}\]
}

\end{center}
where again each point is identified with the corresponding cluster variable. Therefore $y_{0,i}^{'}=x_{i}$ for all $i$
such that $1\leq i\leq n-1;$ and we denote by $y'_{i,j}$ with $0\leq i\leq n-1$, $1\leq j\leq n-1$ and $i+j\leq n$,  the clusters
variables of $\mathcal{A}^{n-1}.$ Because the quiver $\overrightarrow{\mathbb{A}}_{n-1}$ is of Dynkin type $\mathbb{A}_{n-1}$, the cluster algebra
$\mathcal{A}^{n-1}$ is of finite type and the
Auslander-Reiten quiver $\Gamma^{n-1}$ lies on a Moebius strip. Thus, $y'_{0,i}=y'_{i,s}$,
where $r+s=n$ and $1\leq i,s\leq n$.

       We say that a point $y_{i,j}$ of $\Gamma^{n}$ is $stable$ provided $p_{n-1,n}(y_{i,j})=y'_{i,j}.$ Clearly, all the points
$y_{0,i}$ with $1\leq i\leq n-1$ are stable. It then follows from the definition of mutation that, for every pair
$(i,j)$, such that $i+j\leq n-1$, the point $y_{i,j}$ is stable.\\
Now it remains to consider the points $\{y_{i,j}| n\leq i+j\leq n+1\}$ of $\Gamma^{n}.$ Because $p_{n-1,n}$
is an evaluation, we may write $p_{n-1,n}(y_{i,j})=y_{i,j}(1)$ for brevity.

         The proof is completed in the following three steps $(a)$ $(b)$ $(c)$.

        $(a)$ We first claim that, if $i+j=n$ and $n\geq 1,$ then $y_{i,j}(1)=y_{i,j}^{'}.$ This is done by induction on
$i.$

       If $i=1,$ we have $y_{1,n-1}=\frac{1+y_{1,n-2}y_{0,n}}{y_{0,n-1}}$. The evaluation of $y_{1,n-1}$ at 1 is given by:
\begin{eqnarray*}y_{1,n-1}(1)&=&\frac{1+y_{1,n-2}y_{0,n}}{y_{0,n-1}}(1)\\&=&\frac{1+y_{1,n-2}}
{y_{0,n-1}}\\&=&y'_{1,n-1},\end{eqnarray*}
where we used the stability of $y_{0,n-1}$ and $y_{1,n-2}.$

         Assume now the result valid for all $j\leq i.$ We have
$y_{i+1,n-i-1}=\frac{1+y_{i+1,n-i-2}y_{i,n-i}}{y_{i,n-i-1}}$. The evaluation of $y_{i+1,n-i-1}$ at 1 is given by:
\begin{eqnarray*}y_{i+1,n-i-1}(1)&=&\frac{1+y_{i+1,n-i-2}y_{i,n-i}}{y_{i,n-i-1}}(1)
\\&=&\frac{1+y_{i+1,n-i-2}(1)y_{i,n-i}(1)}{y_{i,n-i-1}(1)}
\\&=&\frac{1+y'_{i+1,n-i-2}y'_{i,n-i}}{y'_{i,n-i-1}}\\&=&y'_{i+1,n-i-1},\end{eqnarray*}
where we used the induction hypothesis and the stability of $y_{i+1,n-i-2}$ and $y_{i+1,n-i-1}.$ This establishes our
claim for the step $(a)$.

      $(b)$ Next we consider the particular case of the variable $y_{1,n}.$ Because $y_{1,n}=\frac{1+y_{1,n-1}}{y_{0,n}}$,
 we have
\begin{eqnarray*}y_{1,n}(1)&=&\frac{1+y_{1,n}(1)}{y_{0,n}(1)}
\\&=&1+y'_{1,n-1}\end{eqnarray*} using point $(a).$\\
$(c)$ Finally, we prove that, if $i+j=n+1$ and $i\geq 2,$ then $p_{n-1,n}(y_{i,j})=y'_{0,i}+y'_{i,n-i}.$ Assume
first $i=2,$ then $y_{2,n-1}y_{1,n-1}=1+y_{1,n}y_{2,n-2}$ yields
\begin{eqnarray*}y_{2,n-1}(1)&=&\frac
{1+y_{1,n}(1)y_{2,n-2}(1)}{y_{1,n-1}(1)}
\\&=&\frac{1+(1+y'_{1,n-1})y'_{2,n-2}}{y'_{1,n-1}}\\&=&\frac{1+y'_{2,n-2}}{y'_{1,n-1}}+y'_{2,n-2}
\\&=&y'_{0,1}+y'_{2,n-2}\end{eqnarray*}
where we use points $(a)$ and $(b).$ Finally, if $i\geq 3,$ then
\begin{eqnarray*}y_{i+1,n-i}(1)&=&\frac{1+y_{i+1,n-i-1}(1)y_{i,n-i+1}(1)}{y_{i,n-i}(1)}\\&=&\frac
{1+y'_{i+1,n-i-1}(y'_{i,n-i+1}+y'_{i,n-i})}{y'_{i,n-i}}
\\&=&\frac{1+y'_{i+1,n-i-1}y'_{i,n-i+1}}{y'_{i,n-i}}+y'_{i+1,n-i-1}\\&=&y'_{0,i+1}+y'_{i+1,n-i-1}\end{eqnarray*}
This completes the proof of our claim.

Finally, it follows easily from $(a)$, $(b)$ and $(c)$ that $\mathcal{A}^{n-1}=\mathbb{Z}[p_{n-1,n}(\mathcal{X}^{n})],$ as
asserted.

\end{proof}

      We want to show that $p_{i,j}$ is a morphism of cluster algebras in the sense of Assem, Dupont and Schiffler in [\ref{asdush}].
In order to define cluster morphisms, we recall the definitions of rooted cluster algebras and rooted cluster morphisms due to
I. Assem, G. Dupont and R. Schiffler.

\begin{definition}
A seed is a triple $\Sigma=(X,ex,B)$ such that:\\
(1) $X$ is a countable set of undeterminates over $\mathbb{Z},$ called the clusters of $\Sigma;$\\
(2) $ex\subset X$ is a subset of $X$ whose elements are the exchangeable variables of $\Sigma;$\\
(3) $B=(b_{x,y})_{x,y\in X}\in M_{X}(\mathbb{Z})$ is a (locally finite) skew-symmetrisable matrix called the exchange matrix of $\Sigma.$

\end{definition}

The elements of $X\backslash ex$ are called the $frozen$ $variables$.
Note that in the above definition, the matrix $B$ can be replaced by a (locally finite) quiver without loops and 2-cycles.

    Let $\Sigma=(X,ex,Q)$ be a seed. We say that $(x_1,x_2,...,x_l)$ is $\Sigma$-$admissible$ if $x_1$ is exchangeable in $\Sigma$ and if, for
every $i\geq 2,$ the variable $x_i$ is exchangeable in $\mu_{x_{i-1}}\circ...\circ \mu_{x_1}(\Sigma)$. The mutations are made along finite admissible
sequences of variables.

    A rooted cluster algebra is defined similarly as the Fomin-Zelevinsky cluster algebras, but the definition of rooted cluster algebras authorises seeds whose clusters are empty.
 Such seeds are called empty seeds and by convention the rooted cluster algebra corresponding to an empty seed is $\mathbb{Z}.$ The rooted cluster algebra is always viewed with its initial seed.
To know more about the different points of view between Fomin-Zelevinsky cluster algebras and rooted cluster algebras, we refer to [\ref{asdush}, Remark 1.7].

    Let $\Sigma=(X,ex,Q)$ and $\Sigma'=(X',ex',Q')$ be two seeds and let $f:\mathcal{A}(\Sigma)\longrightarrow \mathcal{A}(\Sigma')$ be a map. A sequence
$(x_1,x_2,...,x_l)\subset \mathcal{A}(\Sigma)$ is $(f,\Sigma,\Sigma')$-$biadmissible$ if it is $\Sigma$-admissible and $(f(x_1),...,f(x_l))$ is $\Sigma'$-admissible.
 The following definition is due to Assem, Dupont and Schiffler.
 \begin{definition}
A rooted cluster morphism from $\mathcal{A}(\Sigma)$ to $\mathcal{A}(\Sigma')$ is a ring homomorphism from $\mathcal{A}(\Sigma)$ to $\mathcal{A}(\Sigma')$ such that:\\
(CM1) $f(X)\subset X'\cup \mathbb{Z}$\\
(CM2) $f(ex)\subset ex'\cup \mathbb{Z}$\\
(CM3) For every $(f,\Sigma,\Sigma')$-biadmissible sequence $(x_1,x_2,...,x_l)$, we have $\mu_{x_{l}}\circ...\circ \mu_{x_1,\Sigma}(y)=\mu_{f(x_{l})}\circ...\circ \mu_{f(x_1),\Sigma'}(f(y)).$

\end{definition}

The rooted cluster algebras and the rooted cluster morphisms forms a category, see [\ref{asdush}].

       Let $\Sigma^{n}=(X^{n},ex^{n},Q^{n})$, with $ex^{n}=X^{n}$, then $\Sigma^{n}$ is a seed of the rooted cluster algebra
$\mathcal{A}(\Sigma^{n})$; the cluster algebra $\mathcal{A}(\Sigma^{n})$ coincides with the cluster algebra $\mathcal{A}^{n}$. We also have $\Sigma^{n-1}=\Sigma^{n}\backslash \{x_n\}$,
where the seed $\Sigma^{n}\backslash \{x_n\}$ is defined in [\ref{asdush}, Section 6.2].

\begin{cor}
Each $\mathbb{Z}$-morphism $p_{i,j}$ is a surjective rooted cluster morphism.
\end{cor}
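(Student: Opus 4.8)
The plan is to verify that each $p_{i,j}$ satisfies the three axioms (CM1), (CM2), (CM3) of a rooted cluster morphism, having already established surjectivity in the preceding Proposition. Since $p_{i,j}$ factors as a composite $p_{i,i+1}\circ p_{i+1,i+2}\circ\cdots\circ p_{j-1,j}$ and the rooted cluster morphisms form a category, it suffices to treat the elementary case $p_{n-1,n}:\mathcal{A}(\Sigma^{n})\longrightarrow\mathcal{A}(\Sigma^{n-1})$, the evaluation sending $x_n\mapsto 1$ and fixing $x_1,\dots,x_{n-1}$. First I would recall that $p_{n-1,n}$ is already known to be a $\mathbb{Z}$-algebra homomorphism (it is an evaluation) and surjective, so only the seed-compatibility conditions remain.

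For (CM1) and (CM2), I would observe that $\Sigma^{n-1}=\Sigma^{n}\setminus\{x_n\}$ in the deletion formalism of Assem--Dupont--Schiffler, and that $p_{n-1,n}$ acts on the initial cluster $X^{n}=\{x_1,\dots,x_n\}$ by sending $x_i\mapsto x_i\in X^{n-1}$ for $i\leq n-1$ and $x_n\mapsto 1\in\mathbb{Z}$. Hence $p_{n-1,n}(X^{n})\subset X^{n-1}\cup\mathbb{Z}$, giving (CM1); and since $ex^{n}=X^{n}$ while $ex^{n-1}=X^{n-1}$, the same computation yields $p_{n-1,n}(ex^{n})\subset ex^{n-1}\cup\mathbb{Z}$, giving (CM2). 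These are essentially bookkeeping once the deletion description of $\Sigma^{n-1}$ is invoked.

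The substantive axiom is (CM3): for every $(p_{n-1,n},\Sigma^{n},\Sigma^{n-1})$-biadmissible sequence $(x_{k_1},\dots,x_{k_l})$ one must show that applying the mutations in $\Sigma^{n}$ and then evaluating agrees with first evaluating and then mutating in $\Sigma^{n-1}$, i.e. $p_{n-1,n}\circ\mu_{x_{k_l}}\circ\cdots\circ\mu_{x_{k_1}}=\mu_{p(x_{k_l})}\circ\cdots\circ\mu_{p(x_{k_1})}\circ p_{n-1,n}$ on cluster variables. The natural strategy is to reduce to a single mutation step and prove the commutation $p_{n-1,n}(\mu_{x_k}(y))=\mu_{p_{n-1,n}(x_k)}(p_{n-1,n}(y))$ by inspecting the exchange relation. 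Here biadmissibility forces the mutated variable $x_k$ to be exchangeable both in $\Sigma^{n}$ and, after evaluation, in $\Sigma^{n-1}$; in particular one never mutates at a variable whose image is the scalar $1$ in a way that would break admissibility. The key point is that the quiver mutation and the exchange relation are \emph{local}, so evaluating $x_n\mapsto 1$ commutes with the exchange polynomial precisely because setting a neighbouring variable to $1$ in the numerator $\prod x_{t(\alpha)}+\prod x_{s(\alpha)}$ reproduces the exchange relation in $\mathcal{A}^{n-1}$ — this is exactly the compatibility already exhibited concretely in parts $(a)$, $(b)$, $(c)$ of the Proposition, where each evaluated cluster variable $y_{i,j}(1)$ was shown to land on the corresponding variable (or sum of variables) of $\mathcal{A}^{n-1}$.

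The main obstacle will be (CM3), and more precisely controlling the boundary cluster variables — those $y_{i,j}$ with $i+j\in\{n,n+1\}$ — whose images under $p_{n-1,n}$ are \emph{not} single cluster variables of $\mathcal{A}^{n-1}$ but rather sums such as $1+y'_{1,n-1}$ or $y'_{0,i}+y'_{i,n-i}$, as computed in steps $(b)$ and $(c)$. For such variables one must check that biadmissibility rules out mutating at them in a manner that would require $\mu$ of a non-cluster-variable, or else that the commutation still holds formally; the careful verification is that any $(p,\Sigma^{n},\Sigma^{n-1})$-biadmissible sequence only mutates at variables whose images are genuine exchangeable variables of $\Sigma^{n-1}$, so that the right-hand side of (CM3) is well-defined and the locality of mutation closes the argument. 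Once this case analysis is dispatched using the explicit evaluations from the Proposition, the general $p_{i,j}$ follows by composing elementary morphisms in the category of rooted cluster algebras, and surjectivity is inherited from the Proposition.
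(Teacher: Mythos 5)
Your proposal is correct in substance but takes a more self-contained route than the paper. The paper's proof is two lines: it reduces to the elementary specialisation $p_{n-1,n}\colon x_n\mapsto 1$ exactly as you do, but then simply invokes [ADS, Proposition 6.10] (specialisation of a variable to $1$ yields a surjective rooted cluster morphism onto $\mathcal{A}(\Sigma^n\setminus\{x_n\})$) for the elementary step, and [ADS, Proposition 2.5] for closure of surjective rooted cluster morphisms under composition. What you propose for (CM1)--(CM3) is, in effect, a re-derivation of that cited Proposition 6.10; the paper buys brevity by outsourcing it, while your version makes the mechanism visible, in particular the way biadmissibility automatically screens out mutations at variables whose images under $p_{n-1,n}$ are sums such as $1+y'_{1,n-1}$ rather than cluster variables --- a point the paper never has to confront. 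If you carry out your direct verification, one step should be made explicit that your sketch only gestures at: for the induction on the length of a biadmissible sequence to close, you need not only that setting $x_n=1$ in the exchange binomial $\prod x_{t(\alpha)}+\prod x_{s(\alpha)}$ reproduces the exchange relation of $\Sigma^{n-1}$, but also that deleting the vertex $n$ from the quiver commutes with mutation at any $k\neq n$ (new arrows created by $\mu_k$ between vertices $i,j\neq n$ are the same before and after deletion, and arrows incident to $n$ are discarded either way), so that after each mutation the two seeds remain matched. With that added, your argument is complete and equivalent to the paper's, just with the black box opened.
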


\begin{proof}
By the Proposition 2.1, the map $p_{n-1,n}$ is a $\mathbb{Z}$-morphism induced by the specialisation of $x_n$ to 1. Because of [\ref{asdush}, Proposition 6.10], the
$\mathbb{Z}$-morphism $p_{n-1,n}$ is a surjective rooted cluster morphism. Since $p_{i,j}=p_{i,i+1}\circ p_{i+1,i+2}\circ...\circ p_{j-1,j}$ and each $p_{n-1,n}$ is a surjective
rooted cluster morphism, by [\ref{asdush}, Proposition 2.5]  $p_{i,j}$ is also a surjective rooted cluster morphism.

\end{proof}

\begin{cor}
The family $(\mathcal{A}^{i},p_{i,j})_{i,j\geq 1}$ forms a projective system of cluster algebras.
\end{cor}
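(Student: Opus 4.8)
The plan is to regard this corollary as a packaging statement: the substantive work is already done, and it remains only to check that the data $(\mathcal{A}^{i}, p_{i,j})_{i \leq j}$ satisfies the axioms of a projective system in the category of rooted cluster algebras. Recall that such a system, indexed by a directed poset, consists of an object for each index, a transition morphism $p_{i,j}$ for each comparable pair $i \leq j$, and the two compatibility conditions $p_{i,i} = \mathrm{id}_{\mathcal{A}^{i}}$ and $p_{i,j} \circ p_{j,k} = p_{i,k}$ whenever $i \leq j \leq k$. Here the index set is the set of positive integers ordered by $\leq$, which is directed since it is totally ordered.

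I would then dispatch each requirement in turn. Each object $\mathcal{A}^{i}$ is a cluster algebra (of finite type $\mathbb{A}_{i}$), so the objects lie in the correct category. By the preceding corollary, each $p_{i,j}$ with $i \leq j$ is a surjective rooted cluster morphism, so the transition maps are morphisms in the category of rooted cluster algebras of Assem, Dupont and Schiffler [\ref{asdush}]. Finally, the two compatibility conditions were already recorded immediately after the definition of the maps: the identity $p_{i,i} = \mathrm{id}_{\mathcal{A}^{i}}$ is clear, and the transitivity $p_{i,j} \circ p_{j,k} = p_{i,k}$ follows from the factorisation $p_{i,j} = p_{i,i+1} \circ \cdots \circ p_{j-1,j}$, that is, from the fact that specialising $x_{i+1}, \ldots, x_{k}$ to $1$ in several stages agrees with doing so in one.

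The single point that deserves a word of justification --- and the only place where I would pause --- is that these equalities, originally established as identities of $\mathbb{Z}$-algebra homomorphisms, remain valid as identities in the category of rooted cluster algebras. This is forced by the construction of that category in [\ref{asdush}]: a rooted cluster morphism is in particular a ring homomorphism, composition is computed as composition of the underlying ring maps, and the identity morphism is the identity ring map. Hence an equality of underlying $\mathbb{Z}$-algebra maps between rooted cluster morphisms is automatically an equality of rooted cluster morphisms, so the compatibility relations transport verbatim. I therefore anticipate no genuine obstacle: the corollary follows by assembling Proposition 2.1, the preceding corollary, and the already-noted identity and transitivity relations.
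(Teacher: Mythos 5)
Your proposal is correct and matches the paper's (entirely implicit) justification: the paper states this corollary without proof, relying exactly on the identities $p_{i,i}=\mathrm{id}_{\mathcal{A}^{i}}$ and $p_{i,j}\circ p_{j,k}=p_{i,k}$ recorded when the maps were defined, together with the preceding corollary that each $p_{i,j}$ is a surjective rooted cluster morphism. Your additional remark that equalities of underlying $\mathbb{Z}$-algebra maps persist as equalities of rooted cluster morphisms is a sound and worthwhile clarification, but it does not change the route.
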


         We denote by $\mathcal{A}= \underset{\longleftarrow}{\lim}\mathcal{A}^{n}$ the corresponding projective limit
 in the category of $\mathbb{Z}$-algebras, thus
 $$\mathcal{A} = \{(a_{n})_{n\geq 1}\in\ \underset{n\geq 1}{\Pi}
\mathcal{A}^{n}| p_{i,j}(a_j)=a_i\}.$$ We also denote by $p_{i}:\mathcal{A}\longrightarrow \mathcal{A}^{i}$ the
canonical morphisms induced by the projective limit. Let $a_l$ be a cluster variable of $\mathcal{A}^{i}$, then the element $(a_1,a_2,...,a_l,a_l,...)$ with $a_j=a_l$ for $j\geq l\geq i$,
is an element of $\mathcal{A}$ and $p_i(a_1,a_2,...,a_l,a_l,...)=a_i$; therefore $p_i$ is a surjective homomorphism of $\mathbb{Z}$-algebras. The $p_{i}$ are called the canonical projections morphisms.
 We thus have a commutative diagram
\[
\xymatrix{
\ldots \ar@{->>}[r] & \mathcal{A}^i \ar@{->>}[r]^-{p_{_{i-1,i}}} & \ldots \ar@{->>}[r]&  \mathcal{A}^2 \ar@{->>}[r]^-{p_{_{1,2}}} & \mathcal{A}^1 \\
                    & \mathcal{A} \ar@{->>}[u]^-{p_i} \ar@{->>}[urr]^-{p_2} \ar@{->>}[urrr]_{p_1}
}
\]

It was shown that the category of rooted cluster algebras admits countable coproducts  [\ref{asdush}, Lemma 5.1] and
does not generally admit products [\ref{asdush}, Proposition 5.4].
\begin{definition}
An element $(a_{n})_{n\geq 1}$ of  $\mathcal{A}=\underset{\longleftarrow}{\lim}\mathcal{A}^{n}$ is ultimately constant
if there exists $j\in \mathbb{N}$ such that
$a_n=a_j$
for all $n\geq j$.
\end{definition}

Let $Q$ be a linearly oriented quiver of type $\mathbb{A}_{\infty}$ having 1 as unique source. Here $(X,Q)$ is a seed of the cluster algebra $\mathcal{B}$.
We want to understand the relation between the cluster algebra $\mathcal{B}$ and the $\mathbb{Z}$-algebras $\mathcal{A}.$ Because the algebra $\mathcal{A}$
is a projective limit of cluster algebras of finite type, it allows to express the cluster algebra $\mathcal{B}$ in term of $\mathcal{A}.$
Our first result is the following.
\begin{theo}
The cluster algebra $\mathcal{B}$ is the proper $\mathbb{Z}$-subalgebra of the algebra $\mathcal{A}$, consisting of the
ultimately constant elements of $\mathcal{A}=\underset{\longleftarrow}{\lim}\mathcal{A}^{n}$.
\end{theo}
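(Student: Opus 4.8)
The plan is to establish a $\mathbb{Z}$-algebra isomorphism between $\mathcal{B}$ and the subalgebra $\mathcal{A}_c\subset\mathcal{A}$ of ultimately constant elements, and then to verify that this inclusion is proper. The natural candidate map sends a cluster variable of $\mathcal{B}$ to its compatible family of images in the finite-type algebras. More precisely, first I would observe that $\mathcal{B}=\mathcal{A}(X,Q)$ has as ambient field $\mathcal{F}=\mathbb{Q}(X)$ with $X=\{x_n\mid n\geq 1\}$, and that every cluster variable $z\in\mathcal{X}$ obtained by a finite sequence of mutations from $(X,Q)$ is, by the Laurent phenomenon for the infinite quiver, a Laurent polynomial involving only finitely many of the $x_n$; say it involves only $x_1,\dots,x_j$. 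The key point is that mutations are local, so applying the same finite mutation sequence inside the finite quiver $\overrightarrow{\mathbb{A}}_n$ (for $n\geq j$ large enough that the sequence stays within the first $n$ vertices) produces the corresponding cluster variable $z^{(n)}\in\mathcal{A}^n$, and the specialisation maps $p_{n-1,n}$ (setting $x_n=1$) are exactly compatible with this, since once $n$ exceeds the support of $z$ the variable no longer changes. Thus I would define $\Phi\colon\mathcal{B}\to\mathcal{A}$ by $\Phi(z)=(z^{(n)})_{n\geq 1}$, where $z^{(n)}$ is the image of $z$ under the natural truncation, and check that this lands in $\mathcal{A}_c$: the sequence is ultimately constant because once $n$ is past the support, $p_{n,n+1}$ fixes the entry.

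Next I would verify that $\Phi$ is a well-defined injective $\mathbb{Z}$-algebra homomorphism. Well-definedness and the homomorphism property follow because each truncation $\mathcal{B}\to\mathcal{A}^n$ is a specialisation (an evaluation of the $x_m$ for $m>n$ at $1$, restricted to the subalgebra generated by variables supported in the first $n$ coordinates) and these are compatible with the $p_{i,j}$ by the multiplicativity $p_{i,j}=p_{i,i+1}\cdots p_{j-1,j}$ already established before Proposition 2.1. For injectivity, I would use that $\mathcal{B}$ sits inside the ambient field $\mathcal{F}=\mathbb{Q}(X)$: if $\Phi(b)=0$ then every truncation of $b$ to finitely many variables vanishes, and since $b$ is itself a finite $\mathbb{Z}$-linear combination of products of finitely many cluster variables — hence supported on finitely many $x_n$ — recovering $b$ from a single sufficiently large truncation shows $b=0$.

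The central step is surjectivity onto $\mathcal{A}_c$, and this is where I would lean hardest on the earlier results. Given an ultimately constant element $a=(a_n)_{n\geq 1}\in\mathcal{A}$ with $a_n=a_j$ for all $n\geq j$, I want to produce $b\in\mathcal{B}$ with $\Phi(b)=a$. Proposition 2.1 tells me that $\mathcal{A}^i=\mathbb{Z}[p_{i,j}(\mathcal{X}^j)]$, so the components $a_i$ are genuine $\mathbb{Z}$-polynomial expressions in images of cluster variables, and the stabilisation phenomenon analysed in steps $(a),(b),(c)$ of that proof shows precisely how cluster variables behave under the specialisation $p_{n-1,n}$. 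The idea is that the stable component $a_j\in\mathcal{A}^j$ lifts canonically to the cluster variable (or polynomial combination) of $\mathcal{B}$ with the same Laurent expression in $x_1,\dots,x_j$, and because $a$ is ultimately constant this lift is consistent with all earlier coordinates via the compatibility relations $p_{i,j}(a_j)=a_i$. The main obstacle I anticipate is a bookkeeping subtlety: I must confirm that the lift of $a_j$ to $\mathcal{B}$ genuinely lies in the subalgebra generated by \emph{mutation-reachable} cluster variables of $(X,Q)$, not merely in the Laurent ring; this requires matching the Auslander–Reiten combinatorics of $\Gamma^j$ (the Moebius-strip identification $y_{0,i}=y_{i,s}$) against the diagonals/cluster variables that are actually attainable in the infinite seed, and ensuring the identifications forced by finite type do not create spurious elements in $\mathcal{A}_c$ that fail to lift. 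Finally, properness of the inclusion $\mathcal{B}\cong\mathcal{A}_c\subsetneq\mathcal{A}$ is the easy endpoint: I would exhibit an explicit non-ultimately-constant compatible family — for instance one built from the mutated variables $p_{i,j}(y_{i,j})$ whose support grows with $n$ — witnessing $\mathcal{A}_c\neq\mathcal{A}$.
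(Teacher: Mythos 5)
Your proposal follows essentially the same route as the paper: the paper realises your truncation map $\Phi$ by introducing the elements $y_i=(1,\dots,1,x_i,x_i,\dots)$ of $\mathcal{A}$, forming the cluster algebra $\tilde{\mathcal{A}}=\mathcal{A}(Y,Q)$ that they generate, showing that $x_i\mapsto y_i$ extends to a cluster isomorphism $\mathcal{B}\cong\tilde{\mathcal{A}}$, and witnessing properness by the explicit non-ultimately-constant compatible family $(x_1,\,x_1x_2,\,x_1x_2x_3,\dots)$. The surjectivity step you flag as the main obstacle --- that every ultimately constant element of $\mathcal{A}$ genuinely lies in the mutation-generated subalgebra --- is simply asserted in the paper without further argument, so your treatment is no less complete than the published one.
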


\begin{proof}
Assume that $Q$ is the quiver having as underlying graph the infinite tree $\mathbb{A}_{\infty}$
with linear orientation and for unique source the vertex 1. Recall that $\mathcal{B}=\mathcal{A}(X,Q)$
is a cluster algebra of seed $(X,Q)$, where  $X = \{x_{n}, n\geq 1\}$. Let $Y = \{y_{n}, n\geq 1\}$ be a new set of undeterminates whose
elements are defined by $y_1=(x_{1},x_{1},x_{1},...)$,
 $y_2=(1,x_{2},x_{2},x_{2},...)$
$y_3=(1,1,x_{3},x_{3},x_{3},...)$,..., $y_i=(1,1,1,...,1,x_{i},x_{i},x_{i},...),$....

      By definition, all  $y_i$ are elements of $\mathcal{A}.$
 Let
$\mathcal{\tilde{F}}=\mathbb{Q}(Y)$ be the field of rational functions over $y_{i}$(with rational coefficients),
we call $\mathcal{\tilde{F}}$ the $ambient$ $field$.

        The cluster algebra $\mathcal{\tilde{A}}=\mathcal{A}(Y,Q)$ is the $\mathbb{Z}$-subalgebra of $\mathcal{\tilde{F}}$
generated by the set $\mathcal{Y}$ which is the union of all possible sets of variables obtained from
$Y$
by successive mutations. We define the map $\varphi:\mathcal{B}\longrightarrow \mathcal{\tilde{A}}$ by setting
$\varphi(x_i)=y_i$ and we extend it to all cluster variables of $\mathcal{B}$ by respecting mutations, that is if $z_i=\mu_{x_{i_k}}...\mu_{x_{i_2}}\mu_{x_{i_1}}(x_i)$
then $\varphi(z_i)=\mu_{\varphi(x_{i_k})}...\mu_{\varphi(x_{i_2})}\mu_{\varphi(x_{i_1})}(\varphi(x_i))$. We extend again $\varphi$ to
an injective morphism of $\mathbb{Z}$-algebras. Thus $\varphi$ is a monomorphism of $\mathbb{Z}$-algebras.

              Let $x=p(x_{1},x_{2},...,x_{k})$ be an element of $\mathcal{X}$ then
$y=p(y_{1},y_{2},...,y_{k})$ is an element of $\mathcal{Y}.$ By definition we have $\varphi(x)=y$.

This shows that the morphism $\varphi$
is an isomorphism between $\mathcal{B}$ and  $\varphi(\mathcal{B})=\mathcal{\tilde{A}}.$

Each ultimately constant element of
$\mathcal{A}$ belongs
to $\mathcal{\tilde{A}}.$ But the element $a=(x_{1},x_{1}x_{2},x_{1}x_{2}x_{3},...,x_{1}x_{2}x_{3}...x_{k},...)$ is an element
 of $\mathcal{A}$
which does not belong to $\mathcal{\tilde{A}}$. Therefore $\mathcal{\tilde{A}}$ is a proper $\mathbb{Z}$-subalgebra of the algebra $\mathcal{A}.$

            Now let $\underline{x} = \{q_{n}|n\geq 1\}$ be a cluster of $\mathcal{B},$ by the definition of $\varphi$ we have
$\varphi(\mu_{q_{k}}(\underline{x}))= \mu_{\varphi(q_{k})}(\varphi(\underline{x}))$ and $\varphi(\underline{x})$ is a cluster
of $\tilde{\mathcal{A}}.$
It follows that $\varphi$ is a cluster isomorphism. For more details about cluster isomorphisms, we refer to [\ref{asdush}].
This completes the proof.
\end{proof}

 \begin{rem}
Let $Q$ be a quiver of type $\mathbb{A}_{\infty}$ and  $Q^{n}$ the full sub-quiver of $Q$ whose set of vertices is $Q^{n}_{0}=\{1,2,...,n\}$.
We denote by $\mathcal{B'}$ the cluster algebra of seed $(X,Q)$ and $\mathcal{A'}^{n}$ the cluster algebra of seed $(X^{n},Q^{n}).$ We reproduce the above construction
with $\mathcal{B'}$ playing the role of $\mathcal{B}$ and $\mathcal{A'}^{n}$ playing the role of $\mathcal{A}^{n}$. Then the Theorem 2.1 remains true for any cluster algebra of type $\mathbb{A}_{\infty}.$
\end{rem}

\begin{cor}
The Laurent phenomenon and the positivity theorem hold for the cluster algebra $\mathcal{B}$.
\end{cor}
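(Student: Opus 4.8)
The plan is to reduce the statement for $\mathcal{B}$ to the Laurent phenomenon and positivity theorem for the finite-type cluster algebras $\mathcal{A}^n$, which were recalled in Section 2.2, by exploiting the fact emphasized earlier that mutation is a local operation on the locally finite quiver $Q$ of type $\mathbb{A}_{\infty}$. First I would fix an arbitrary cluster variable $z\in\mathcal{X}$. By definition $z$ is obtained from the initial cluster $X$ by a finite sequence of mutations $\mu_{x_{i_k}}\circ\cdots\circ\mu_{x_{i_1}}$, and I set $M=\max\{i_1,\ldots,i_k\}$, the largest vertex at which a mutation is performed. Since the underlying graph of $Q$ is the linear tree $1-2-3-\cdots$, every vertex $\ell$ has as its only neighbours $\ell-1$ and $\ell+1$; hence a mutation at a vertex $\ell\leq M$ only involves the variables $x_{\ell-1},x_{\ell},x_{\ell+1}$ and only alters the quiver among the vertices $\leq M+1$.

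Next I would check that the whole computation localizes to the full subquiver of $Q$ supported on $\{1,\ldots,M+1\}$, which is precisely the linearly oriented quiver $\overrightarrow{\mathbb{A}}_{M+1}$. The point is that, because no mutation is ever performed at the endpoint $M+1$ nor at any vertex beyond it, the arrow joining $M+1$ to $M+2$ and the entire infinite tail of $Q$ are never touched and never enter any exchange relation used to produce $z$: mutations at vertices $\leq M$ only reference arrows incident to those vertices and $2$-paths through them, all of whose endpoints stay within $\{1,\ldots,M+1\}$. Consequently, running the same mutation sequence inside the field $\mathcal{F}$ yields exactly the same rational function as running it inside $\mathcal{F}^{M+1}=\mathbb{Q}(x_1,\ldots,x_{M+1})$ for $\mathcal{A}^{M+1}$. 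Thus $z$ lies in $\mathbb{Q}(x_1,\ldots,x_{M+1})$ and coincides with a genuine cluster variable of $\mathcal{A}^{M+1}$. I would isolate this observation as a short lemma (or an explicit appeal to the locality of mutation already noted in the text).

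Finally I would invoke the finite-type results recalled in Section 2.2: as $\mathcal{A}^{M+1}$ is of Dynkin type $\mathbb{A}_{M+1}$, the Laurent phenomenon gives that $z$ is a Laurent polynomial $p(x_1,\ldots,x_{M+1})/\prod_{l=1}^{M+1}x_l^{d_l}$ with $p\in\mathbb{Z}[x_1,\ldots,x_{M+1}]$ and $d_l\geq 0$, while the positivity theorem (valid because the type is $\mathbb{A}_{M+1}$) gives that all coefficients of $p$ are non-negative integers. Since this holds for every cluster variable of $\mathcal{B}$, both the Laurent phenomenon and positivity follow for $\mathcal{B}$. The only delicate point is the second step, namely the rigorous justification that truncating the quiver at $M+1$ reproduces the very same element of $\mathcal{F}$; but this is a direct consequence of the locality of mutation, so no genuine obstacle remains. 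One could alternatively phrase the same reduction through the isomorphism $\varphi$ of Theorem 2.1 by identifying $z$ with the component $p_{M+1}(\varphi(z))\in\mathcal{A}^{M+1}$, but the direct localization argument is self-contained and cleaner.
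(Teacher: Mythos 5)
Your proof is correct, and it arrives at the same reduction as the paper: every cluster variable of $\mathcal{B}$ is in fact a cluster variable of some finite-type algebra $\mathcal{A}^{n}$, so the Laurent phenomenon and positivity for type $\mathbb{A}_{n}$ transfer immediately. The difference is only in how this reduction is justified. The paper routes through Theorem 2.1: it identifies the cluster variable $x$ with an ultimately constant element $(a_1,\dots,a_m,a_m,\dots)$ of the projective limit, reads off its stable value $a_m\in\mathcal{A}^{n}$, and transports the Laurent expression back through the isomorphism $\varphi$. You instead give a direct localization argument: taking $M$ to be the largest mutated vertex, you observe that no exchange relation ever involves $x_{M+2},x_{M+3},\dots$ and that the quiver beyond vertex $M+1$ is never altered, so the entire mutation sequence is literally a computation inside $\mathcal{A}^{M+1}\subset\mathbb{Q}(x_1,\dots,x_{M+1})$. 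Your version is more self-contained (it does not depend on Theorem 2.1) and actually makes explicit the step the paper leaves implicit, namely \emph{why} $x$ ``is identified to a cluster variable of $\mathcal{A}^{n}$''; the paper's version has the advantage of reusing the machinery it has already built and of making the statement visibly compatible with the projective-limit description of $\mathcal{B}$. One small caution: if you want the corollary for an arbitrary orientation of $\mathbb{A}_{\infty}$ (as in Remark 2.1), your locality argument still works verbatim, but the truncated quiver is then the corresponding full subquiver $Q^{M+1}$ rather than the linearly oriented $\overrightarrow{\mathbb{A}}_{M+1}$; this does not affect the conclusion since any orientation of $\mathbb{A}_{M+1}$ is still of finite Dynkin type.
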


\begin{proof}
Let $x$ be a cluster variable of the cluster algebra $\mathcal{B}$. Then there exists a non-negative integer $n$ such that $x$ is identified to a cluster variable $a_m$ of $\mathcal{A}^{n}.$ We have $x=(a_1,a_2,...,a_m,a_m,...)$ with $m\leq n$. Thus by Theorem 2.1, $a_m=q(x_1,x_2,...,x_n)$ if and only if $x=q(y_1,y_2,...,y_n)$, where $Q$ is a Laurent polynomial.
Since $a_m$ is a cluster variable of $\mathcal{A}^{n},$ and it is well-known that the Laurent phenomenon and the positivity theorem hold for the cluster algebra of type $\mathbb{A}_{n}$, then
the Laurent phenomenon and the positivity theorem hold for the cluster algebra  $\mathcal{B'}.$

\end{proof}

\section{Cluster algebras arising  from infinity-gon}
Fomin, Shapiro and Thurston initiated a study of the cluster algebras arising from triangulations of a surface with
boundary and finitely many marked points in [\ref{fst}]. In this approach, it was shown that the cluster algebra associated to a triangulation
of a marked surface $(S,M)$ depends only on the surface $(S,M)$ and not on the choice of triangulation. As we shall see
this is not true in the case of the infinity-gon. Our objective in this section is to classify the cluster algebras arising
from the infinity-gon.

\subsection{Triangulations of the infinity-gon}
In this subsection, we classify the triangulations of the infinity-gon $\mathbb{S}$ using the notions of connected component and
frozen arc which will be defined later.

            We adopt the same philosophy as that of [\ref{holjor}], that is, we view the integers as the vertices of the infinity-gon
and the pairs of integers as the arcs.

            Let $(m,n)$ be an arc of the infinity-gon, with $m< n$. If $n-m=1,$ we
say that the arc $(m,n)$ is a $boundary$ arc, and if $n-m\geq 2,$ we say that $(m,n)$ is a $internal$ $arc$ of the infinity-gon. The
In the following internal arcs will simply called arcs. Two arcs $(m,n)$ and $(p,q)$
are said to $cross$ if we have either $m< p< n< q$  or $p< m< q< n.$ A $triangulation$
of $\mathbb{S}$ is a maximal set of non-crossing arcs.

\begin{definition}
A triangulation $T$ of $\mathbb{S}$ is called a $zigzag$ $triangulation$ if it is of the form $T= \{(-n+n_{0},n+n_{0}),(-n+n_{0}-1,n+n_{0})/ n\geq 1\}$
or $T= \{(-n+n_{0},n+n_{0}),(-n+n_{0},n+n_{0}+1)/ n\geq 1\},$ where $n_0$ is a given integer.
\end{definition}

\begin{expl}

 Let $T$ and $T'$ be the sets of arcs defined by  $T= \{(-n,n),(-n,n+1)/ n\geq 1\}$ and $T'= \{(-n,-1),(-1,1),(1,n)/ n\geq 3\}.$
 One can show that $T$ and $T'$ are two triangulations of $\mathbb{S}$ whose illustrations are respectively the following:
\begin{center}

\[
\xymatrix @-4.0pc @! {
       \rule{0ex}{7.5ex} \ar@{--}[r]
     & *{}\ar@{-}[r]
     & *{\rule{0.1ex}{0.8ex}} \ar@{-}[r] \ar@/^3.0pc/@{-}[rrrrrrrrrrrr]
     & *{\rule{0.1ex}{0.8ex}} \ar@{-}[r]\ar@/^2.50pc/@{-}[rrrrrrrrrrr]
     & *{\rule{0.1ex}{0.8ex}} \ar@{-}[r]\ar@/^2.0pc/@{-}[rrrrrrrrr]
     & *{\rule{0.1ex}{0.8ex}} \ar@{-}[r] \ar@/^1.3pc/@{-}[rrrrrrr]
     & *{\rule{0.1ex}{0.8ex}} \ar@{-}[r] \ar@/^0.8pc/@{-}[rrrrr]
     & *{} \ar@{-}[r] \ar@{-}[r] \ar@/^0.4pc/@{-}[rrr]
     & *{} \ar@{-}[r]\ar@/^-0.4pc/@{-}[llrr]
     & *{\rule{0.1ex}{0.8ex}} \ar@{-}[r] \ar@/^-0.3pc/@{-}[lllr]
     & *{\rule{0.1ex}{0.8ex}} \ar@{-}[r] \ar@/^-0.6pc/@{-}[llll]
     & *{\rule{0.1ex}{0.8ex}} \ar@{-}[r] \ar@/^-1.0pc/@{-}[llllll]
     & *{\rule{0.1ex}{0.8ex}} \ar@{-}[r] \ar@/^-1.5pc/@{-}[llllllll]
     & *{\rule{0.1ex}{0.8ex}} \ar@{-}[r] \ar@/^-2.3pc/@{-}[llllllllll]
     & *{}\ar@{--}[r]
     & *{}
                    }
\]
\end{center}

\begin{center}

\[
\xymatrix @-4.0pc @! {
       \rule{0ex}{7.5ex} \ar@{--}[r]
     & *{}\ar@{-}[r]
     & *{\rule{0.1ex}{0.8ex}} \ar@{-}[r] \ar@/^3.0pc/@{-}[rrrrr]
     & *{\rule{0.1ex}{0.8ex}} \ar@{-}[r]\ar@/^2.50pc/@{-}[rrrr]
     & *{\rule{0.1ex}{0.8ex}} \ar@{-}[r]\ar@/^2.0pc/@{-}[rrr]
     & *{\rule{0.1ex}{0.8ex}} \ar@{-}[r] \ar@/^1.0pc/@{-}[rr]
     & *{\rule{0.1ex}{0.8ex}} \ar@{-}[r]
     & *{a} \ar@{-}[r]
     & *{\rule{0.1ex}{0.8ex}} \ar@{-}[r]
     & *{b} \ar@{-}[r]\ar@/^-1.0pc/@{-}[ll]
     & *{\rule{0.1ex}{0.8ex}} \ar@{-}[r]
     & *{\rule{0.1ex}{0.8ex}} \ar@{-}[r] \ar@/^-1.0pc/@{-}[ll]
     & *{\rule{0.1ex}{0.8ex}} \ar@{-}[r] \ar@/^-1.5pc/@{-}[lll]
     & *{\rule{0.1ex}{0.8ex}} \ar@{-}[r] \ar@/^-2.0pc/@{-}[llll]
     & *{\rule{0.1ex}{0.8ex}} \ar@{-}[r] \ar@/^-2.5pc/@{-}[lllll]
     & *{}\ar@{--}[r]
     & *{}
                    }
\]
.\end{center}
The triangulation $T$ is a $zigzag$ triangulation.
\end{expl}
The following definition is due to Holm and J{\o}rgensen in [\ref{holjor}].
\begin{definition}
Let $T$ be a triangulation of $\mathbb{S}$.\\
$(a)$If for each integer $n$ there are only finitely many arcs in $T$ which are incident to $n$, then $T$ is called $locally$ $finite$.\\
$(b)$If $n$ is an integer such that $T$ contains infinitely many arcs of the form $(m,n)$, then $n$ is called a $left$-$fountain$ of $T$.\\
$(c)$If $n$ is an integer such that $T$ contains infinitely many arcs of the form $(n,p)$, then $n$ is called a $right$-$fountain$ of $T.$\\
$(d)$If $n$ is both a left-fountain and a right-fountain of $T$, then it is called a $fountain$.

\end{definition}

It is shown in [\ref{holjor}]  that if a triangulation of $\mathbb{S}$ has a right-fountain, then it also has a left-fountain and vice versa.
The following result in [\ref{holjor}, Lemma 3.3] characterizes the triangulations of infinity-gon.
\begin{lem}(Holm-J{\o}rgensen)
 Let $T$ be a triangulation of $\mathbb{S}$. Then $T$ has at most
one right-fountain and at most one left-fountain.
\end{lem}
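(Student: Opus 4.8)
The plan is to argue by contradiction, using the non-crossing condition of a triangulation directly. Suppose first that $T$ has two distinct right-fountains, say $n_1 < n_2$. Since $n_1$ is a right-fountain, $T$ contains infinitely many arcs of the form $(n_1,s)$; as the free endpoints $s$ are distinct integers all exceeding $n_1$, they cannot accumulate and so are unbounded above. Hence I may fix an arc $(n_1,s) \in T$ with $s > n_2$. Likewise, since $n_2$ is a right-fountain, its right endpoints are unbounded above, so I may fix an arc $(n_2,t) \in T$ with $t > s$.

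Now I would simply test the crossing condition for these two arcs. The chosen vertices satisfy the strict chain $n_1 < n_2 < s < t$. Reading this against the definition of crossing with $(m,n) = (n_1,s)$ and $(p,q) = (n_2,t)$, the pattern $m < p < n < q$ becomes $n_1 < n_2 < s < t$, which holds. Thus $(n_1,s)$ and $(n_2,t)$ cross, contradicting the fact that $T$ is a set of pairwise non-crossing arcs. This proves that $T$ has at most one right-fountain.

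The left-fountain case is the mirror image. If $n_1 < n_2$ were two distinct left-fountains, then the left endpoints of the arcs incident to $n_1$ (respectively $n_2$) form unbounded-below sets of integers, by the same accumulation argument. I would first choose an arc $(r,n_2) \in T$ with $r < n_1$, and then an arc $(r',n_1) \in T$ with $r' < r$, producing the chain $r' < r < n_1 < n_2$; with $(m,n) = (r',n_1)$ and $(p,q) = (r,n_2)$ this is exactly the crossing pattern $m < p < n < q$, again contradicting non-crossing. Hence $T$ has at most one left-fountain as well.

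The only point requiring care is the unboundedness observation that lets me place the chosen endpoints beyond $n_2$ (respectively below $n_1$); this is immediate, since infinitely many arcs sharing a fixed vertex have distinct free endpoints lying on one side of that vertex, and distinct integers bounded on that side would be finite in number. I expect no genuine obstacle here: the essential content is that a fountain forces its arcs to fan out monotonically, so two fountains on the same side must interleave and therefore cross.
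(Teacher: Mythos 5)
Your proof is correct. The key observation --- that a right-fountain's free endpoints form an infinite set of integers bounded below, hence unbounded above, so one can always choose an arc at $n_1$ reaching past $n_2$ and then an arc at $n_2$ reaching past that --- does produce the interleaving pattern $n_1 < n_2 < s < t$, which is exactly the crossing condition $m<p<n<q$ from the paper's definition, and the left-fountain case is the honest mirror image. Note, however, that the paper does not prove this lemma at all: it is stated as a quotation of Lemma~3.3 of Holm and J{\o}rgensen's paper and used as a black box. Your argument is essentially the standard one from that source (two fountains on the same side force two arcs to cross), so what you have produced is a correct, self-contained replacement for the citation rather than an alternative to an argument appearing in this paper.
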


         Now we introduce the notion of connected component of the triangulations of $\mathbb{S}$ before giving a classification of the
triangulations of $\mathbb{S}$. Let $T$ be a triangulation of $\mathbb{S}$ and $\gamma$ an arc of $T.$ We say that the arc $\gamma'$ is the $flip$ of the arc $\gamma$ if the set
$(T\backslash \{\gamma\})\cup\{\gamma'\}$ is a triangulation of $\mathbb{S}.$

         Following J{\o}rgensen and Palu in [\ref{jorpal}], we say that  the arc $\omega=(s,t)$ $spans$ the arc $\delta=(u,v)$ if $s\leq u< v< t$ or $s< u< v\leq t.$ We denote by
         $\mathcal{B}(\omega)$ the set of all arcs spanned by the given arc $\omega.$

\begin{definition}
Let $T$ be a triangulation, and $\tau$ an arc of $T$. We say that an arc $\gamma$ belongs to the connected component of $\tau$ if there exists
a finite sequence of flips $\mu_1,\mu_2,...,\mu_k$ such that $\gamma=\mu_k...\mu_2\mu_1(\tau).$\end{definition}
 We denote by $C_{\tau}$ the connected component of
$\tau.$ The connected components of the arcs of $T$ are called simply the $connected$ $components$ of $T$.

        An arc $\gamma$ of $\mathbb{S}$ is $reachable$ by $T$ if it belongs to a connected components of $T$. If this is not the case then we say that $\gamma$ is
$unreachable$ by $T$. An arc of $T$ which cannot be flipped to any other arc is called a $frozen$ arc.

\begin{lem}
Any triangulation of $\mathbb{S}$ has at most one frozen arc.
\end{lem}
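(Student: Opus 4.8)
The plan is to characterize frozen arcs in terms of fountains and then to invoke the lemma of Holm and J{\o}rgensen recalled above, which guarantees that $T$ has at most one left-fountain and at most one right-fountain. Since $T$ is a maximal set of non-crossing arcs, every internal arc $\gamma=(m,n)$ of $T$ is an edge of exactly two complementary regions of $\mathbb{S}$: the bounded one, whose marked points are $m+1,\dots,n-1$ and whose arcs form the finite set $\mathcal{B}(\gamma)$, and the unbounded one, which contains the limit point(s) of the infinity-gon. The bounded region has only finitely many vertices, so by maximality it is a triangle $(m,k,n)$; hence $\gamma$ admits a flip if and only if the unbounded region adjacent to it is also a triangle. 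Thus I would first record the equivalence: $\gamma$ is frozen if and only if its unbounded adjacent region $F$ is an infinite face of $T$.

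The key step is to show that the endpoints of a frozen arc are forced to be fountains: if $\gamma=(m,n)$ is frozen, then $m$ is a left-fountain and $n$ is a right-fountain. First I would prove that the infinite face $F$ has no finite boundary vertex other than $m$ and $n$. Indeed, if $F$ had three distinct finite boundary vertices $a<b<c$, then since the interior of $F$ contains no arc of $T$, the arc $(a,c)$ would cross none of the arcs of $T$; and $(a,c)$ cannot already be an edge of $F$, for then $b$ would be cut off from the infinite face $F$. Adding $(a,c)$ would therefore contradict the maximality of $T$. Hence the only finite vertices on the boundary of $F$ are $m$ and $n$. Consequently this boundary consists of $(m,n)$ together with two infinite families of arcs accumulating at a limit point, one family incident to $m$ and one incident to $n$. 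The family at $m$ is a fan of arcs $(j,m)$ with $j<m$ running off to $-\infty$, so $T$ contains infinitely many arcs of the form $(j,m)$ and $m$ is a left-fountain; symmetrically the family at $n$ is a fan of arcs $(n,k)$ with $k>n$, so $n$ is a right-fountain. That the two fans must run in opposite directions is forced by the non-crossing condition, since two arcs emanating past $\gamma$ on the same side would cross.

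Granting this, the conclusion follows at once. Suppose, for contradiction, that $T$ had two distinct frozen arcs $(m_1,n_1)$ and $(m_2,n_2)$. By the previous step $m_1$ and $m_2$ are both left-fountains of $T$, so the Holm--J{\o}rgensen lemma forces $m_1=m_2$; likewise $n_1$ and $n_2$ are both right-fountains, whence $n_1=n_2$. The two arcs therefore coincide, a contradiction, and $T$ has at most one frozen arc. In particular a locally finite triangulation, having no fountain, has no frozen arc, and a triangulation with a left-fountain at $a$ and a right-fountain at $b$ has at most the single frozen arc $(a,b)$.

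The main obstacle will be the geometric step of the second paragraph, namely the passage from \emph{$F$ is an infinite face with only two finite boundary vertices} to \emph{$m$ is a left-fountain and $n$ is a right-fountain}. Making this rigorous requires a careful description of the boundary of an infinite face in the model of the infinity-gon used above: one must verify that the boundary arcs accumulating at a limit point form honest fans based at $m$ and at $n$ rather than some other accumulation pattern, and track the cyclic order of the arcs around $m$ and $n$ so that the fan at $m$ indeed consists of arcs arriving from the left and the fan at $n$ of arcs leaving to the right. The maximality argument ruling out a third finite boundary vertex is the crux; once it is in place, the remainder is bookkeeping with the angular order of arcs around the two endpoints.
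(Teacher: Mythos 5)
Your argument is correct in outline and reaches the conclusion by a genuinely different route from the paper. The paper's proof runs a case analysis over the global structure of $T$ (locally finite; left- and right-fountain at adjacent vertices; left- and right-fountain at distance at least two), counts connected components in each case, and only in the last paragraph argues abstractly that two frozen arcs would each bound a finite component and hence force two left- or two right-fountains, contradicting the Holm--J{\o}rgensen lemma. You instead work locally at the frozen arc itself: you characterize a frozen arc $\gamma=(m,n)$ as one whose unbounded adjacent face $F$ is not a triangle, use maximality of $T$ to show $F$ then has no finite boundary vertex besides $m$ and $n$, deduce that $m$ is a left-fountain and $n$ a right-fountain, and conclude by the same Holm--J{\o}rgensen lemma. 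Both proofs ultimately rest on that lemma, but yours isolates the clean intermediate statement \emph{the endpoints of a frozen arc are fountains}, which the paper only gestures at (``each frozen arc bounds a finite connected component, \dots therefore $T$ has more than one left-fountain or more than one right-fountain''), and yours avoids invoking the type classification, which in the paper is only proved afterwards.

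Two points to tighten. First, your sentence that the boundary of $F$ ``consists of $(m,n)$ together with two infinite families of arcs'' is not literally accurate: those arcs are not contained in $\partial F$ but accumulate on it; apart from $(m,n)$ the boundary of $F$ lies at infinity. What you actually need, and what does follow, is the angular-order argument at the vertex $m$: the edges of $T$ incident to $m$ on the unbounded side of $(m,n)$ are, in order, the arcs $(m,k)$ with $k>n$, then the arcs $(j,m)$ with $j$ increasing to $m-1$, ending with the boundary edge $(m-1,m)$; if this collection had a first element in that order, its second endpoint would be a third finite vertex on $\partial F$. Hence no arc $(m,k)$ with $k>n$ lies in $T$ and the set of arcs $(j,m)$, $j<m$, must be unbounded below, i.e.\ $m$ is a left-fountain (and symmetrically for $n$). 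This is exactly the gap you flagged, and it closes as indicated. Second, in the maximality step you should note explicitly that once $(a,c)$ is forced into $T$ and onto $\partial F$, the alternative that $F$ lies on the bounded side of $(a,c)$ makes $F$ a bounded face and hence a triangle, which you have excluded; with that, the step is complete.
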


\begin{proof}
Let $T$ be a triangulation of $\mathbb{S}$. Assume that $T$ is locally finite. Let $\gamma$ be an arc of $\mathbb{S}$, then there exists an arc $\zeta$ such that $\gamma$
is an arc of the polygon  $\mathbb{P}_{\zeta}$ bounded by $\zeta$. The
restriction $T_{\zeta}$ of the triangulation $T$ to $\mathbb{P}_{\zeta}$ is a triangulation. Because $\gamma$ is an arc of $\mathbb{P}_{\zeta}$, it is joined by a finite
sequence of flips of arcs of $T_{\zeta}$. Since $T_{\zeta}\subset T$, then $\gamma$ is joined by a finite
sequence of flips of arcs of $T$. Thus, each arc of $\mathbb{S}$ is reachable; hence $T$ has no frozen arc. If $T$ has a left-fountain
$m_0$ and a right-fountain $n_0$ such that $n_0-m_0=1,$ then $T$ has two connected components and no frozen arc. If $T$ has a left-fountain
$m_0$ and a right-fountain $n_0$ such that $n_0-m_0\geq 2,$ then $T$ has three connected components and the arc $(m_0,n_0)$ is a frozen arc. Assume that
$T$ possesses another frozen arc $(m_1,n_1),$ then $(m_1,n_1)$ crosses an infinity of arcs of $T$ incident to $m_0$ or an infinity of arcs of $T$ incident to $n_0$.\\
Assume now that $T$ is a triangulation with two frozen arcs $\omega_1$ and $\omega_2$. The arc $\omega_1$ does not span $\omega_2$ and vice versa, because if not, then one
of the two frozen arcs can be flipped to another arc. Each frozen arc bounds a finite connected component, and then it is finite. Therefore, the triangulation $T$ has
more than one left-fountain or more than one right-fountain. This is a contradiction to Lemma 3.1.

\end{proof}

  Let $T$ be a triangulation of $\mathbb{S}$ with a frozen arc $\tau$, we say that $T$ is of Type $(III)_{k}$ with  $k=|\mathcal{B}(\tau)|,$ where $\mathcal{B}(\tau)$ denotes the number of arcs
spanned by the frozen arc $\omega.$
\begin{definition}
A triangulation $T$ of $\mathbb{S}$ is called \\
a) of Type $(I)$ if it has only one connected component;\\
b) of Type $(II)$ if it has two connected components and no frozen arc.\\
c)of Type $(III)_{k}$ if it has two connected components and one frozen arc or
if it has three connected components and one frozen arc, where $k$ is a nonnegative integer.\\
\end{definition}
Note that $(III)_{k}\neq (III)_{k'}$ if and only if   $k\neq k'$.

           The following result gives a classification of the triangulations of $\mathbb{S}$ which uses the notions of connected components and the frozen arc.
\begin{prop}
 Any triangulation of $\mathbb{S}$ is one of the three types $(I)$, $(II)$, $(III)_{k}$.

\end{prop}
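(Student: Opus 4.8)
The plan is to organize the proof as a case analysis driven entirely by the fountain structure of $T$, which by Lemma 3.1 together with the Holm--J{\o}rgensen existence result (a right-fountain forces a left-fountain and conversely) is severely constrained: either $T$ has no fountain at all, or it has exactly one left-fountain $m_0$ and exactly one right-fountain $n_0$. I would first observe that in the latter situation one must have $m_0\le n_0$: otherwise, picking a left-fountain arc $(m,m_0)$ with $m<n_0$ and a right-fountain arc $(n_0,p)$ with $p>m_0$ (both available since there are infinitely many of each) produces the configuration $m<n_0<m_0<p$, so the two arcs cross, contradicting that $T$ consists of non-crossing arcs.

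In the first case, $T$ is locally finite (no fountain). The argument already given in the proof of Lemma 3.2 shows that every arc of $\mathbb{S}$ is reachable, so $T$ has no frozen arc, and it remains only to see that $T$ has exactly one connected component. Given two arcs $\gamma_1,\gamma_2\in T$, I would enclose both inside a single bounding arc $\zeta$, so that the restriction $T_\zeta$ is a triangulation of the finite polygon $\mathbb{P}_\zeta$. Since the flip graph of a finite polygon is connected (the associahedron), $\gamma_1$ and $\gamma_2$ are joined by a finite sequence of flips inside $\mathbb{P}_\zeta$, and each such flip is a legitimate flip of $T$ because the arcs involved lie inside $\mathbb{P}_\zeta$ and hence cross nothing outside it. Thus $T$ is of Type $(I)$.

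In the second case, $T$ has left-fountain $m_0$ and right-fountain $n_0$ with $m_0\le n_0$. The crux is that a fountain vertex cannot be spanned: if an arc $(a,b)$ satisfied $a<m_0<b$ (resp.\ $a<n_0<b$), it would cross all but finitely many of the infinitely many fountain arcs incident to $m_0$ (resp.\ $n_0$), which is impossible. Consequently no arc joins the region to the left of $m_0$ to the region to the right of $n_0$, and no flip can carry an arc across a fountain vertex. I would then split into subcases according to $n_0-m_0$. When $m_0=n_0$ (a genuine fountain) or $n_0-m_0=1$, the intervening arc is absent or is merely a boundary arc, so $T$ decomposes into the two connected components lying to the left of $m_0$ and to the right of $n_0$, there is no frozen arc, and $T$ is of Type $(II)$. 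When $n_0-m_0\ge 2$, maximality forces the internal arc $\tau=(m_0,n_0)$ into $T$; it cannot be flipped, since each side of it is an infinite region pinned by a fountain, so it is a frozen arc forming its own connected component, giving three connected components in total and placing $T$ in Type $(III)_k$ with $k=|\mathcal{B}(\tau)|$.

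Since these cases are mutually exclusive and exhaustive, every triangulation of $\mathbb{S}$ is of one of the three types. The step I expect to be the main obstacle is the careful bookkeeping of connected components in the second case: beyond showing that arcs cannot cross a fountain vertex, one must verify that each of the two ``halves'' flanking the fountain is itself a single connected component, so that the total count is exactly two (or exactly three once the frozen arc is split off) rather than more.
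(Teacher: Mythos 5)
Your proof follows essentially the same route as the paper's: a case split on whether $T$ is locally finite or has a left-fountain $m_0$ and a right-fountain $n_0$, with the subcases $n_0-m_0\le 1$ and $n_0-m_0\ge 2$, using Lemma 3.1 and Lemma 3.2 to pin down the component and frozen-arc counts in each case. If anything your organization is slightly tighter: driving the case analysis by the fountain structure makes exhaustiveness immediate (so the paper's closing argument excluding four or more connected components becomes superfluous), and your observation that no arc of $T$ can span a fountain vertex gives a cleaner maximality argument for $(m_0,n_0)\in T$ than the paper's case-by-case check.
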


\begin{proof}
Let $T$ be a triangulation of $\mathbb{S}$. If $T$ is locally finite, then every arc of $\mathbb{S}$ can be reached by a sequence of flips of arcs of $T.$ Then $T$ is of type (I). If $T$ has a fountain or a left-fountain $m_0$ and a right-fountain $n_0$ with $n_0-m_0=1$, then $T$ has two connected components, and
any arc of each component is reachable. In this case $T$ has no frozen arc, hence $T$ is of type(II).

        If $T$ has a left-fountain $m_0$ and a right-fountain $n_0$ with $n_0-m_0\geq 2$, then $T$ has three connected components. The arc $(m_0,n_0)$ is an arc of $T.$
Assume that $(m_0,n_0)$ is not in $T$; then one of the arcs $(m_0+1,n_0), (m_0, n_0-1)$ belongs to $T$ and one of the arcs $(m_0-1,n_0), (m_0, n_0+1)$
belongs to $T.$ If the arc $(m_0+1,n_0)$ belongs to $T,$ then $(m_0-1,n_0)\in T$ and  $(m_0-1,n_0)$ crosses an infinite number of arcs of $T$ incident to a left fountain $m_0$.
This is a contradiction because $T$ is a triangulation.

          Similarly, if $(m_0,n_0-1)$ belongs to $T$, we get a contradiction. Hence the arc $(m_0,n_0)$ is a frozen arc, and it
is unique by the lemma 4.1.1. Thus $T$ is of type $(III)_k$.

      Assume now that $T$ has $l$ connected components, where $l\geq 4.$ Then only one of the $l$ components is finite, because if not, $T$ would have more than
one frozen arc, and this is a contradiction to Lemma 3.2. Therefore, the triangulation $T$ has at least three infinite connected components. Each of the
infinite connected components of $T$ contains either a right-fountain or a left-fountain, thus $T$ has more than two fountains, this contradicts Lemma 3.
\end{proof}

Fomin, Shapiro and Thurston in [\ref{fst}] associated to a triangulation of a marked surface
$(S,M)$ a finite quiver without cycles of length at most two. Similarly, we associate to each triangulation of $\mathbb{S}$ an infinite quiver without
cycles of length at most two.

        Let $T$ be a triangulation of the infinity-gon $\mathbb{S}$, we associate to $T$ a quiver $Q_{T}.$ The classification of quivers $Q_{T}$ is given in [\ref{gs}, Theorem 3.11].
We associate to the triangulation $T$ the cluster algebra $\mathcal{A}(T)$ of seed $(X_{T},Q_{T})$ in the same way as the one for a marked surface $(S,M).$

\begin{rem}
Proposition 3.1 is equivalent to the Theorem 3.11 of [\ref{gs}].
\end{rem}

The following remark gives a relationship between the quiver of type $\mathbb{A}_{\infty}$ and the quivers associated to the triangulations on the infinity-gon.

\begin{rem}
If $R$ is a quiver of type $\mathbb{A}_{n}$, then there exists a triangulation $\Gamma$ of the $(n+3)$-gon  such that $R\cong Q_{\Gamma};$ but if we reproduce
the assumption above with $\mathbb{A}_{\infty}$ playing the role of $\mathbb{A}_{n}$ and the infinity-gon playing the role of $(n+3)$-gon, then this is not true.
For example let $Q$ be a linearly oriented quiver of type $\mathbb{A}_{\infty}$ and assume that there exists a triangulation $T$ of the infinity-gon such that $Q\cong Q_{T}$.
Hence $Q$ is a triangulation with left fountain and without right fountain or $Q$ is a triangulation with right fountain and without left fountain. This is a contradiction with
[\ref{holjor}].

\end{rem}

       An isomorphism $f$ between two $\mathbb{Z}$-algebras is called a $strong$ $isomorphism$ of clusters algebras if $f$ maps each cluster to a cluster and preserves
mutations. For more details
we refer to [\ref{fz1}].

\begin{expl}
We let $T= \{(-n,n),(-n-1,-n)/ n\geq 1\}$ and\\ $T'= \{(-n,0),(0,n)/ n\geq 2\},$  be two triangulations of $\mathbb{S}.$
The quiver associated to the triangulation $T$ is the quiver $Q_{T}$ given by:

  $$
\xymatrix @=20pt
{
&1\ar@{->}[r]
&2\ar@{<-}[r]
&3\ar@{->}[r]
&...\ar@{<-}[r]
&n-1\ar@{->}[r]
& n\ar@{<-}[r]
&...
}
$$
and the quiver associated to the triangulation $T'$ is the non connected quiver $Q_{T'}$ given by:

$
\xymatrix @=20pt
{
&2\ar@{->}[r]
&4\ar@{->}[r]
&6\ar@{->}[r]
&...\ar@{->}[r]
&2n\ar@{->}[r]
& 2n+2\ar@{->}[r]
&...
}
$

$
\xymatrix @=20pt
{
&1\ar@{<-}[r]
&3\ar@{<-}[r]
&5\ar@{<-}[r]
&...\ar@{<-}[r]
&2n-1\ar@{<-}[r]
& 2n+3\ar@{<-}[r]
&...
}
.$

There is no strong isomorphism between the cluster algebras $\mathcal{A}(T)$ and $\mathcal{A}(T').$
This shows that the theorem of Fomin, Shapiro and Thursten mentioned above does not hold for the infinity-gon.
\end{expl}

\subsection{The specificity of cluster algebras arising from the infinity-gon}

In this section we find a criterion that allows us to decide whether two triangulations give rise to isomorphic cluster algebras.

\begin{definition}
Two triangulations $T$ and $T'$ are said to be $congruent$ if there exists a bijection
$\theta:\overline{\mathbb{S}}\longrightarrow \overline{\mathbb{S}}$ which maps $T$ to $T'$ and preserves the flips of arcs; that is
$\theta(T)=T'$ and $\theta(f_\gamma)=f_{\theta(\gamma)}$, where $f_\gamma$ is the flip of $\gamma.$

\end{definition}
The bijection $\theta$ is called an $admissible$ map.
If  $T$ and $T'$ are $congruent$, we denote this situation by $T\simeq T'.$ Congruence is an equivalence relation.

\begin{definition}
A sequence of arcs $(\gamma_{n})_{n\geq 1}$ in $\overline{\mathbb{S}}$, is said to
span $\overline{\mathbb{S}}$ if for any arc $\gamma$ there exists an integer $k$ such that $\gamma$ is spanned by $\gamma_{k}.$

\end{definition}

\begin{lem}

Let $T$ be a triangulation of type $(I)$, then there exist a zigzag triangulation $Z$ and a sequence of common arcs
$(\gamma_{n})_{n\geq 1}$ in $T$ and $Z$
 that spans $\overline{\mathbb{S}}$.

\end{lem}
\begin{proof}

Let $T$ be a triangulation of type $(I)$, we shall show that there exists a sequence of distinct arcs $(\gamma_{k_{n}})_{n\geq 1}$ of $T$
such that $\gamma_{k_{n}}=(s_{k_{n}},s_{k_{n}})$, where $s_{k_{n+1}}<s_{k_{n}<0}$ and  $t_{k_{n+1}}>t_{k_{n}>0}.$

     We assume first that any arc $\gamma=(s,t)$, with with $s<0$ and $t>0$ does not belong to $T.$

     Because $(s,t)$ does not belong to $T$ and $T$ is a triangulation, there is an arc $\gamma_{1}=(s_{1},t_{1})$ of $T$
      which crosses
$(s,t)$ and  $\gamma_{1}$ is closer to a vertex $t$ and $t_{1}>t$. Analogously, there exists an arc $\gamma_{2}=(s_{2},t_{2})$ of
de $T$ which crosses
$(s,t)$ and  $\gamma_{2}$ is closer to a vertex $s$ and $s_{2}<s$. The connected components $C_{\gamma_{1}}$ and $C_{\gamma_{2}}$
respectively of $\gamma_{1}$ and $\gamma_{2}$ are distinct, this is a contradiction because $T$ is of type $(I).$ Hence $\gamma\in T.$

          We show that $T$ has an infinite sequence of arcs $(\gamma_{k_{n}})_{n\geq 1}$ such that $\gamma_{k_{n}}=(s_{k_{n}},s_{k_{n}})$, where $s_{k_{n+1}}<s_{k_{n}<0}$
and $t_{k_{n+1}}>t_{k_{n}>0}.$
Assume that any sequence of arcs $(\gamma_{k_{n}})_{n\geq 1}$ such that $\gamma_{k_{n}}=(s_{k_{n}},s_{k_{n}})$, where $s_{k_{n+1}}<s_{k_{n}<0}$
and $t_{k_{n+1}}>t_{k_{n}>0}$ is finite. Let $\gamma_{k_{l}}=(s_{k_{l}},t_{k_{l}})$,
 with $s_{k_{l}}<0$ and $t_{k_{l}}>0$ be an arc of $T$ such for all arcs of $(\gamma_{k_{n}})_{n\geq 1}$, we have
$s_{k_{l}}<s_{k_{n}}$ and  $t_{k_{l}}>t_{k_{n}}.$ The same argument used for $\gamma$ to $\gamma_{k_{l}}$ gives rise to a contradiction.
Thus there exists a sequence of arcs $(\gamma_{k_{n}})_{n\geq 1}$ of $T$ which can be chosen such that
$s_{k_{n+1}}<s_{k_{n}<0}$ and  $t_{k_{n+1}}>t_{k_{n}>0}.$
         Now we construct a zigzag triangulation having infinitely many common arcs with $T$.
Because $(\gamma_{k_{n}})_{n\geq 1}$ is a sequence of infinitely many non-crossing arcs, we complete it in each polygon bounded by $\gamma_{k_{n}}$
and $\gamma_{k_{n+1}}$. We choose one zigzag triangulation of the polygon bounded by $\gamma_{k_{n}}$
and $\gamma_{k_{n+1}}$. We apply this process for all non-negative integers $n$, thus for all polygons bounded by $\gamma_{k_{n}}$
and $\gamma_{k_{n+1}}$. This process gives rise to two subsets of $\mathbb{S}$ as follows.

    The first is composed by the arcs $\gamma_{k_{n}}$, for all $n$.

    The second is the union of the new arcs of the zigzag triangulations of each polygon bounded by $\gamma_{k_{n}}$
and $\gamma_{k_{n+1}}$.

      Let $Z$ be the union of the two subsets defined above. In fact, $Z$ is a triangulation of $S$ by the construction, and each $\gamma_{k_{l}}$ is a common
arc of $T$ and $Z$.

         Finally, we show that $(\gamma_{k_{n}})_{n\geq 1}$ spans $\overline{\mathbb{S}}$.
Let $\delta=(u,v)$ be an arc of $\mathbb{S},$ since $(\gamma_{k_{n}})_{n\geq 1}$ is infinite, there is an integer $l$ such that the arc
 $\delta$ is spanned by $\gamma_{k_{l}}$. This completes the proof.
\end{proof}

\begin{lem}
Let $T$ and $T'$ be two triangulations of type $(II)$ or $T$ and $T'$ be two triangulations of type $(III)_{k}$, and let $C_{T}$ and $C_{T'}$
their connected components respectively. Then there is a sequence of common arcs
$(\gamma_{n})_{n\geq 1}$ in $T$ and $T'$
 that spans $C_{T}$ and $C_{T'}$.
\end{lem}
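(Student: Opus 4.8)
The plan is to reduce to the one–component situation of Lemma 3.3 by treating each connected component separately, the new features being the presence of fountains and, in the type $(III)_k$ case, of a finite component bounded by the frozen arc.

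First I would dispose of the finite component. If $T$ and $T'$ are of type $(III)_k$ they each carry a unique frozen arc $\tau$, resp. $\tau'$, with $|\mathcal{B}(\tau)|=|\mathcal{B}(\tau')|=k$ (Lemma 3.2 and the definition of type $(III)_k$); the region it bounds is a triangulated finite polygon, hence contributes only finitely many arcs, all trivially spanned by $\tau$ itself. These finitely many arcs can be appended to the tail of the sequence once the infinite components are handled, so it suffices to produce a common spanning sequence on each pair of infinite components.

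Next, fix an infinite component $C_T$ of $T$ and the corresponding $C_{T'}$ of $T'$. By Proposition 3.1 and Lemma 3.1, each infinite component is a triangulation of a half-ray of vertices anchored at a fountain: in type $(II)$ at the fountain $f$, and in type $(III)_k$ at the endpoint $m_0$ (a left-fountain) or $n_0$ (a right-fountain) of the frozen arc. For the right-hand half-ray anchored at $a$, the right-fountain condition furnishes, already inside the single triangulation, infinitely many arcs $(a,t)\in C_T$ with $t\to+\infty$; these form a nested sequence and span $C_T$, since any arc $(u,v)$ of $C_T$ has $v<t$ for $t$ large, whence it is spanned by $(a,t)$. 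The left-hand half-ray is treated symmetrically, and the same construction yields a nested spanning sequence in $C_{T'}$. (Should one prefer to avoid the fountain arcs, the contradiction argument of Lemma 3.3 — a missing deep arc would split the component into two distinct connected components — produces the nested sequence equally well.)

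Finally I would make the two sequences coincide, so that the resulting $(\gamma_n)_{n\ge 1}$ consists of arcs of $T\cap T'$. Here I would align the anchors of $C_T$ and $C_{T'}$ and thread a single nested sequence through $T\cap T'$, completing the polygons between consecutive chosen arcs by one fixed zigzag pattern exactly as the triangulation $Z$ was built in Lemma 3.3; assembling the sequences obtained on the two infinite components and appending the finite part in the type $(III)_k$ case then gives a sequence spanning both $C_T$ and $C_{T'}$. The hard part is precisely this last step: a fountain forces infinitely many incident arcs in each triangulation separately, but the two families $\{(a,t)\in T\}$ and $\{(a,t)\in T'\}$ need not agree, so the delicate point is to use the matching of the anchors, together with the zigzag-completion device, to guarantee an infinite supply of genuinely common arcs reaching to infinity.
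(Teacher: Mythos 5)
Your reduction agrees with the paper's up to the decisive point: translate one triangulation so that the fountains of $T$ and $T'$ coincide (an admissible map, so harmless for the intended application), observe that in type $(III)_k$ the frozen arc $(m_0,n_0)$ then lies in both triangulations and by itself spans the whole finite component, and look for a nested spanning sequence of arcs anchored at the fountain in each infinite component. All of that is sound and is essentially what the paper does in cases $(i)$ and $(ii)$.

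The genuine gap is the step you yourself label ``the hard part'': you never actually exhibit infinitely many arcs lying in $T\cap T'$. The zigzag-completion device borrowed from Lemma 3.3 cannot do this job, because there one is free to \emph{construct} the auxiliary triangulation $Z$ so that it contains the arcs already chosen in $T$, whereas here $T'$ is given in advance and no choice made inside the polygon between two consecutive arcs of $T$ can force an arc of $T$ to belong to $T'$. The paper closes this step with a single assertion: once $T$ and $T'$ share the right-fountain $n_0$ (and left-fountain $m_0$), they share infinitely many arcs of the form $(n_0,n)$ and $(m,m_0)$, and these fountain-incident arcs are taken as the common spanning sequence. That is precisely the idea absent from your write-up. (Your own remark that the families $\{(n_0,t)\in T\}$ and $\{(n_0,t)\in T'\}$ need not agree is in fact a legitimate objection to the paper's assertion as well: to the right of $n_0$ the triangulation $T$ could consist of the arcs $(n_0,n_0+2i)$ together with $(n_0+2i,n_0+2i+2)$, while $T'$ uses the odd offsets $(n_0,n_0+2i+1)$ and $(n_0+2i+1,n_0+2i+3)$, leaving the two right halves disjoint. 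So the paper asserts rather than proves the existence of common arcs; but as it stands your proposal does not supply an argument either, and the lemma is not established by it.)
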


 \begin{proof}

    $(i)$  Let $T$ and $T'$ be two triangulations of type $(II)$, suppose that $T$ has a left-fountain $m_0$ and a right-fountain $n_0$,
$n_0-m_0=1$. There is a triangulation $\Gamma$ with one fountain such that its associated quiver $Q_{\Gamma}$ is isomorphic to
the associated
quiver $Q_T$ of $T$.

              Because of the above argument, it is sufficient to give a proof just for the case where each triangulation has a left-fountain and a right-fountain.
Let $T$ and $T'$ be two triangulations of type $(II)$ such that $T$ has a left-fountain $m_0$ and a right-fountain $n_0$ and
$T'$ has a left-fountain $m'_0$ and a right-fountain $n'_0$. Because $n_0$ and $n'_0$ are integers, we can assume that
$n_0\leq n'_0;$ and there is a non-negative integer $l$ such that $l=n'_0-n_0.$  We define the map
$\sigma:\overline{\mathbb{S}}\longrightarrow \overline{\mathbb{S}}$ by $\sigma(m,n)=(m+l,n+l)$. The map $\sigma$ is a bijection and
the image $\sigma(T)$ of $T$ is a triangulation. Moreover, $\sigma$ preserves the flips of arcs. Thus $\sigma$ is an admissible map.
Therefore, we can suppose without loss of generality that $T$ and $T'$ have the same left-fountain $m_0$ and the same
right-fountain $n_0$. Because the triangulations  $T$ and $T'$ have the same left-fountain $m_0$ and the same
right-fountain $n_0$, then $T$ and $T'$ have infinitely many common arcs of the form $(n_0,n)$ and so, $T$ and $T'$ have infinitely many common arcs of the form$(m,m_0)$.
Thus there is a sequence of
distinct common arcs $(\gamma_{k_{n}})_{n\geq 1}$ which spans $C_{T}$ and $C_{T'}.$

         $(ii)$   Now we suppose that $T$ and $T'$ are two triangulations of type $(III)_{k}$. By using the argument above, we can assume
without loss of generality that $T$ and $T'$ have the same left-fountain $m_0$ and the same right-fountain $n_0$.

            If $k$ is equal to zero, the proof is similar to the case $(i).$  If $k\geq 1$, the frozen arc $\omega$ bounds a
polygon $\mathbb{P}_\omega$;
and by the definition of $k$, each arc of the restriction of $T$ to $\mathbb{P}_\omega$ is spanned by $\omega$ and each arc of
the restriction of $T'$ to $\mathbb{P}_\omega$ is spanned by $\omega$. Combining this argument and the one used in $(i)$ to define
the common arcs, we find that there exists a sequence of distinct arcs $(\gamma_{k_{n}})_{n\geq 1}$ which spans $C_{T}$ and $C_{T'}.$

\end{proof}

\begin{prop}
Let $T$ and $T'$ be two triangulations of $\mathbb{S},$ then  $T$ and $T'$ are congruent if and only if  $T$ and $T'$ are
of the same type.
\end{prop}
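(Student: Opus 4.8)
The plan is to prove both directions of the equivalence, with the substantive content concentrated in the ``congruent $\Rightarrow$ same type'' direction being essentially immediate and the ``same type $\Rightarrow$ congruent'' direction carrying the real work. First I would record the easy direction: if $T$ and $T'$ are congruent via an admissible map $\theta$, then $\theta$ is a bijection of $\overline{\mathbb{S}}$ carrying $T$ to $T'$ and commuting with flips. Since the type of a triangulation is defined purely in terms of the number of connected components (which are generated by flips from a starting arc) and the presence and span of a frozen arc, and $\theta$ preserves flips, $\theta$ maps connected components bijectively to connected components and frozen arcs to frozen arcs. Moreover a frozen arc $\tau$ has $\theta$-image a frozen arc whose spanned set is the $\theta$-image of $\mathcal{B}(\tau)$, so $|\mathcal{B}(\theta(\tau))|=|\mathcal{B}(\tau)|$ and the index $k$ is preserved. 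Hence $T$ and $T'$ have the same type.

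For the converse, the strategy is to treat the three cases $(I)$, $(II)$, $(III)_k$ separately and, in each case, to build the admissible bijection $\theta$ explicitly out of the common-arc structure provided by Lemma~3.3 and Lemma~3.4. In the type $(I)$ case I would apply Lemma~3.3 to both $T$ and $T'$ to obtain spanning sequences of arcs $(\gamma_{k_n})_{n\ge 1}$ and $(\gamma'_{k_n})_{n\ge 1}$; each such sequence exhausts $\overline{\mathbb{S}}$ in the spanning sense, and the polygons cut out between consecutive arcs of the sequence are finite polygons, each carrying a finite restricted triangulation. Since both restricted triangulations are triangulations of finite polygons with matching boundary data, I would define $\theta$ polygon-by-polygon as the induced bijection on vertices, gluing the pieces along the common boundary arcs $\gamma_{k_n}$; the flip-preservation property is local to each finite polygon and follows from the classical fact that flips of a finite polygon are preserved under relabelling of vertices, so the glued $\theta$ preserves flips globally.

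For types $(II)$ and $(III)_k$ the argument runs in parallel but now uses Lemma~3.4. The key preliminary reduction is the translation trick $\sigma(m,n)=(m+l,n+l)$ already exhibited in the proof of Lemma~3.4, which lets me assume $T$ and $T'$ share the same left-fountain $m_0$ and right-fountain $n_0$; after this normalization the two (or three) connected components admit a common spanning sequence of arcs, and in the type $(III)_k$ case the finite middle polygon $\mathbb{P}_\omega$ bounded by the frozen arc has exactly $k$ spanned arcs on each side, so the finite triangulations of $\mathbb{P}_\omega$ can be matched directly. I would then define $\theta$ componentwise, being careful to send the frozen arc of $T$ to the frozen arc of $T'$ and the finite polygon triangulation to its counterpart, again gluing along the common spanning arcs. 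The main obstacle I anticipate is the verification that the locally-defined $\theta$ assembles into a single \emph{globally} well-defined bijection of $\overline{\mathbb{S}}$ that preserves flips across the gluing arcs rather than merely within each finite polygon; this requires checking compatibility of the vertex identifications at each shared boundary arc $\gamma_{k_n}$ and confirming that a flip of an arc lying on such a boundary is still sent to the corresponding flip, which is where the spanning property and the careful choice of common arcs in Lemmas~3.3 and~3.4 are essential.
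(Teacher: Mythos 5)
Your forward direction (congruent $\Rightarrow$ same type) matches the paper's argument: an admissible map sends connected components to connected components, preserves their number and finiteness, and preserves the size of the finite component, hence the type. The gap is in your type $(I)$ case of the converse. You apply Lemma~3.3 to $T$ and to $T'$ \emph{separately}, obtaining two spanning sequences $(\gamma_{k_n})$ and $(\gamma'_{k_n})$, and then propose to match the $n$-th polygon of $T$ with the $n$-th polygon of $T'$, ``gluing along the common boundary arcs.'' But $T$ and $T'$ need not share a single arc, so these boundary arcs are not common to anything; and the polygon cut out between $\gamma_{k_n}$ and $\gamma_{k_{n+1}}$ and the one cut out between $\gamma'_{k_n}$ and $\gamma'_{k_{n+1}}$ will in general have different numbers of vertices, so there is no ``induced bijection on vertices'' and no bijection at all between their sets of diagonals. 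Your claim of ``matching boundary data'' is exactly the point that needs proof, and it fails as stated.

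The paper circumvents this by using the zigzag triangulation of Lemma~3.3 as a normal form: the lemma produces a zigzag $Z$ whose spanning sequence consists of arcs \emph{common to $T$ and $Z$}, so the restrictions $T_{k_n}$ and $Z_{k_n}$ are two triangulations of the \emph{same} finite polygon $\mathbb{P}_{k_n}$ and are therefore related by a finite sequence of flips, giving admissible maps $\theta_n$ that assemble (via the minimal $n$ with $\gamma\in\overline{\mathbb{P}}_{k_n}$) into $T\simeq Z$. Doing the same for $T'$ gives $T'\simeq Z'$, and finally any two zigzag triangulations are congruent via an explicit vertex translation $\sigma(m,n)=(m+l,n+l)$, whence $T\simeq Z\simeq Z'\simeq T'$. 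You need this intermediate normalization (or some other mechanism forcing the two polygon decompositions to coincide); your types $(II)$ and $(III)_k$ arguments survive because there Lemma~3.4, after the translation trick, really does supply arcs common to $T$ and $T'$ themselves.
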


\begin{proof}

Let $T$ and $T'$ be two triangulations of $\mathbb{S},$ and assume that $T\simeq T'.$
Since $T\simeq T',$ there exists an admissible map $\theta$ which maps $T$ to $T'.$
Let $C_{\gamma}$ be a connected component of $T,$ then $\theta(C_{\gamma})$ is a connected component of $T'.$
Because $\theta(\gamma)\in \theta(C_{\gamma})$, we have $\theta(C_{\gamma})=C_{\theta(\gamma)}.$ If $C_{\gamma}$ and
$C_{\delta}$ are two distinct connected components of $T$, then $\theta(C_{\gamma})$ and $\theta(C_{\delta})$ are distinct
connected components of $T'$.  The number of connected components is invariant by $\theta.$ Moreover the connected components
$C_{\gamma}$ and $C_{\theta(\gamma)}$ are both either finite, or infinite. Moreover, if $T$  has a finite component
$C_{\gamma}$, then $|C_{\gamma}|=|C_{\theta(\gamma)}|=k.$
Hence $T$ and $T'$ are of the same type.

          Conversely let $T$ and $T'$ be two triangulations of $\mathbb{S},$ we have three cases.

           $(i)$ If $T$ and $T'$ are of type $(I),$ then by Lemma 3.3 there exists a zigzag triangulation $Z$
and a sequence of common distinct arcs $(\gamma_{k_{n}})_{n\geq 1}$ in $T$ and $Z$
 that spans $\overline{\mathbb{S}}$.

  Let $\mathbb{P}_{k_{n}}$ be the polygon bounded by the arc $\gamma_{k_{n}}.$ The restriction $T_{k_{n}}$
and $Z_{k_{n}}$ of the triangulations $T$ and $Z$ are the triangulations of $\mathbb{P}_{k_{n}}.$ We want to show that
$T=\bigcup\limits_{n\geq 1}^{} T_{k_{n}}$ and $Z=\bigcup\limits_{n\geq 1}^{} Z_{k_{n}}$. Because $T_{k_{n}}\subset T$,
then  $\bigcup\limits_{n\geq 1}^{} T_{k_{n}}\subset T$. For now let us show that $\bigcup\limits_{n\geq 1}^{} T_{k_{n}}$
is a triangulation.
Assume that $\bigcup\limits_{n\geq 1}^{} T_{k_{n}}$ is not a triangulation. We have $\bigcup\limits_{n\geq 1}^{} T_{k_{n}}\subset T$
 and $T$ is a maximal set of arc, then there exists an arc $\varsigma\in T$
which does not belong to $\bigcup\limits_{n\geq 1}^{} T_{k_{n}}.$ Because the sequence $(\gamma_{k_{n}})_{n\geq 1}$
spans $\overline{\mathbb{S}}$, there exists an integer $l$ such that the arc $\varsigma$ is spanned by $\gamma_{k_{l}}.$
Hence $\varsigma$ is an arc of $T_{k_{l}}$ this is a contradiction. Thus $T=\bigcup\limits_{n\geq 1}^{} T_{k_{n}}$.
Analogously, one can show that $Z=\bigcup\limits_{n\geq 1}^{} Z_{k_{n}}.$

     We know that  $T_{k_{n}}$ and  $Z_{k_{n}}$ are related by a sequence of flips. This sequence of flips induces an admissible
map $\theta_n$ which maps $T_{k_{n}}$ to  $Z_{k_{n}}.$ Let $\overline{\mathbb{P}}_{k_{n}}$ be the set of all arcs of
$\mathbb{P}_{k_{n}},$ we have $\overline{\mathbb{P}}_{k_{n}}\subset \overline{\mathbb{P}}_{k_{n+1}}$. We have also
$\bigcup\limits_{n\geq 1}^{}\overline{\mathbb{P}}_{k_{n}}=\overline{\mathbb{S}}$.

        We define the map $\theta:\overline{\mathbb{S}}\longrightarrow \overline{\mathbb{S}}$ by the following: let $\gamma$
be an element of $\overline{\mathbb{S}}$, then there is a minimal integer $n$ such that $\gamma \in \overline{\mathbb{P}}_{k_{n}},$
we set $\theta(\gamma)=\theta_{n}(\gamma).$ By construction, $\theta$ is a bijection which maps $T$ to $Z$ and preserves
the flips of arcs. Hence $T\simeq Z.$

     We reproduce the same reasoning above with $T'$ playing the role of $T$ and $Z'$ playing the role of $Z.$

If $(m_0,n_0)$ is the arc of $Z$  and  $(m'_0,n'_0)$ is the arc of $Z'$ such that $n_0-m_0=2=n'_0-m'_0$. We set $l=n'_0-n_0$
and define the map $\sigma:\overline{\mathbb{S}}\longrightarrow \overline{\mathbb{S}}$ by $\sigma(m,n)=(m+l,n+l).$
Then $\sigma$ is an admissible map which maps $Z$ to $Z'$. Then $Z\simeq Z'$ and thus $T\simeq T'$

          $(ii)$ If $T$ and $T'$ are of the type $(II),$ because $C_T=C_{T'}$ we use Lemma 3.4 and we have a bijection
$\theta$ in $C_T$ which maps $T$ to $T'$ and preserves the flips of arcs. We define $\theta$ on each unreachable arc $\gamma$ by
$\theta(\gamma)=\gamma.$ Thus we have extended the bijection $\theta$ to $\overline{\mathbb{S}}$. Hence $\theta$ is an
admissible map, thus $T\simeq T'.$

           $(iii)$ If $T$ and $T'$ are of the type $(III_{k}),$ the restrictions of the triangulations $T$ and $T'$to the polygon bounded
by the frozen arc are congruent. Because $C_T=C_{T'}$, by using the same principle as in $(b)$, we construct an admissible map $\theta$
which maps $T$ to $T'.$

\end{proof}

\begin{cor}

Let $T$ and $T'$ be two triangulations of $\mathbb{S}$, if $T$ and $T'$ are mutation equivalent, then $T$ and $T'$
are congruent.

\end{cor}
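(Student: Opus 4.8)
The plan is to show that mutation equivalence forces $T$ and $T'$ to be of the same type, and then to invoke Proposition 3.2, which asserts that two triangulations are congruent precisely when they share the same type. Since mutating the seed $(X_T,Q_T)$ at a cluster variable corresponds, via the Fomin--Shapiro--Thurston dictionary, to performing a flip of the triangulation $T$, the hypothesis that $T$ and $T'$ are mutation equivalent means exactly that $T'$ is obtained from $T$ by a \emph{finite} sequence of flips. It therefore suffices to prove that the type of a triangulation is invariant under a single flip and then to iterate along this finite sequence.

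The key observation is that the type is entirely determined by the fountain structure, as is implicit in the proof of Proposition 3.1: a triangulation with no fountain is locally finite and hence of Type $(I)$; one possessing a fountain, or a left-fountain $m_0$ and a right-fountain $n_0$ with $n_0-m_0=1$, is of Type $(II)$; and one with a left-fountain $m_0$ and a right-fountain $n_0$ with $n_0-m_0\geq 2$ is of Type $(III)_k$, where the frozen arc is $(m_0,n_0)$ and $k=|\mathcal{B}((m_0,n_0))|$ depends only on the difference $n_0-m_0$. Now a single flip replaces exactly one arc $\gamma$ by its flip $\gamma'$, leaving every other arc untouched. Since being a left-fountain or a right-fountain at a vertex requires infinitely many incident arcs, altering a single arc can neither create, destroy, nor displace any fountain; hence the set of left-fountains, the set of right-fountains, and in particular the distance $n_0-m_0$ between them (when they exist) are all preserved.

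Consequently a flip preserves the defining data of the type: it sends Type $(I)$ to Type $(I)$, Type $(II)$ to Type $(II)$, and Type $(III)_k$ to Type $(III)_k$ with the same $k$, the last point because $k$ is a function of $n_0-m_0$ alone, and the frozen arc $(m_0,n_0)$ admits no flip and so is never among the arcs being modified. Arguing by induction on the length of the finite flip sequence linking $T$ and $T'$, we conclude that $T$ and $T'$ are of the same type, whence Proposition 3.2 yields $T\simeq T'$, as claimed.

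The main obstacle I anticipate is the careful justification that the integer $k$ attached to a Type $(III)_k$ triangulation is genuinely preserved: one must verify both that the frozen arc is never flipped (immediate from its definition as an arc admitting no flip) and that flipping any of the finitely many arcs spanned by the frozen arc alters neither the frozen arc itself nor the count $|\mathcal{B}((m_0,n_0))|$, the latter being determined by the endpoints $(m_0,n_0)$ rather than by the particular triangulation of the bounded polygon. The remaining verifications---that fountains are insensitive to a single-arc change---are straightforward once one recalls that fountains are an infinitary feature of the triangulation.
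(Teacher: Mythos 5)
Your proof is correct and follows essentially the same route as the paper: reduce to showing that $T$ and $T'$ are of the same type and then invoke Proposition 3.2. The paper's own proof simply asserts that flip-equivalent triangulations have the same type, whereas you actually justify that step (fountains are an infinitary feature unaffected by altering a single arc, the frozen arc is never flipped, and $k$ is determined by $n_0-m_0$), which fills a gap the paper leaves open.
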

\begin{proof}
Let $T$ and $T'$ be two triangulations of $\mathbb{S}$. Assume that $T$ and $T'$ are flip equivalent. Because the triangulations $T$ and $T'$ are flip equivalent, they are of the same type.
By the proposition 3.2 we have $T\simeq T'.$
\end{proof}

Now we are in position to prove
our second main theorem.

\begin{theo}
Let $T$ and $T'$ be two triangulations of $\mathbb{S},$ $\mathcal{A}(T)$ and $\mathcal{A}(T')$ the associated cluster algebras.
Then $T$ and $T'$ are congruent if and only if the clusters algebras $\mathcal{A}(T)$ and $\mathcal{A}(T')$ are strongly isomorphic.

\end{theo}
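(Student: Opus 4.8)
The plan is to prove the two implications separately, using the dictionary between the cluster-theoretic data of $\mathcal{A}(T)$ and the combinatorial data of $T$, exactly as in the marked-surface model of Fomin–Shapiro–Thurston [\ref{fst}]: the cluster variables of $\mathcal{A}(T)$ are indexed by the reachable arcs together with the frozen arc (if any), the clusters correspond to the triangulations flip-equivalent to $T$, and each mutation $\mu_\gamma$ is realised by the flip of $\gamma$, the exchange relation being the Ptolemy relation attached to the quadrilateral bounded by the two triangles adjacent to $\gamma$. I would first record this dictionary precisely; it is the infinity-gon analogue of the finite case and is implicit in [\ref{gs}].

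For the direction \emph{congruent $\Rightarrow$ strongly isomorphic}, suppose $T\simeq T'$ via an admissible map $\theta$. I would define a field isomorphism $\Phi:\mathbb{Q}(X_T)\longrightarrow \mathbb{Q}(X_{T'})$ by $x_\gamma\mapsto x_{\theta(\gamma)}$ for $\gamma\in T$; this is well defined and bijective because $\theta$ restricts to a bijection $T\to T'$ and the initial cluster variables are algebraically independent. The claim is that $\Phi$ carries $\mathcal{A}(T)$ onto $\mathcal{A}(T')$ and is a strong isomorphism. I would prove $\Phi(x_\delta)=x_{\theta(\delta)}$ for every reachable arc $\delta$ by induction on the number of flips needed to reach $\delta$ from $T$; the base case $\delta\in T$ holds by definition, and since $\theta$ preserves flips it sends the exchange quadrilateral of $\gamma$ to that of $\theta(\gamma)$, so applying $\Phi$ to the Ptolemy relation $x_\gamma x_{\gamma'}=x_a x_c+x_b x_d$ and using the inductive hypothesis yields $\Phi(x_{\gamma'})=x_{\theta(\gamma')}$ (note $\theta(\gamma')$ is again reachable, as $\theta(T)=T'$). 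Because $\theta(T)=T'$ and flips go to flips, $\Phi$ sends clusters to clusters and commutes with mutation, i.e. it is a strong isomorphism.

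For the converse, suppose $f:\mathcal{A}(T)\longrightarrow\mathcal{A}(T')$ is a strong isomorphism. Through the dictionary, $f$ induces a bijection $\theta_0$ from the reachable (and frozen) arcs of $T$ to those of $T'$, and it maps the initial cluster $X_T$ to some cluster $X_{T''}$ of $\mathcal{A}(T')$, where $T''$ is a triangulation flip-equivalent to $T'$. Since $f$ preserves mutation, $\theta_0$ preserves flips, and extending $\theta_0$ by the identity on the unreachable arcs, exactly as in the proof of Proposition 3.2, produces an admissible map witnessing $T\simeq T''$. Now $T''$ and $T'$ are flip-equivalent, hence congruent by Corollary 3.1, and as congruence is an equivalence relation we conclude $T\simeq T''\simeq T'$.

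The step I expect to be the main obstacle is the geometric point in the forward direction: verifying that an admissible map, which by definition only intertwines the flip operation, actually transports the full exchange quadrilateral of each arc to the exchange quadrilateral of its image, so that the Ptolemy relations match. I would handle this by observing that $\gamma$ together with its flip $\gamma'$ determine the four vertices of the surrounding quadrilateral, and hence its four sides; thus preservation of flips at every triangulation in the mutation class of $T$ forces preservation of the bounding arcs and induces the quiver isomorphism $Q_T\cong Q_{T'}$. A secondary point in the converse is the bookkeeping for frozen and unreachable arcs, ensuring $\theta_0$ genuinely extends to a bijection of all of $\overline{\mathbb{S}}$; this is settled as in Proposition 3.2 together with the invariance of the frozen arc, which corresponds to the unique cluster variable contained in every cluster and is therefore preserved by $f$.
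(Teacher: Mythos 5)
Your forward direction is essentially the paper's: both define the map on the initial cluster by $x_\gamma \mapsto u_{\theta(\gamma)}$ and propagate it through mutations using the fact that the admissible map $\theta$ intertwines flips; you are in fact more careful than the paper about \emph{why} flip-preservation forces the exchange (Ptolemy) relations to match, a point the paper passes over silently. The converse is where you genuinely diverge. The paper argues by contraposition: if $T$ and $T'$ are not congruent then, by Proposition 3.2, they are of different types, and a four-case analysis (type $(I)$ versus $(II)$, $(I)$ versus $(III)_k$, $(II)$ versus $(III)_k$, and $(III)_k$ versus $(III)_{k'}$ with $k\neq k'$) derives a contradiction with the existence of a strong isomorphism, using respectively that mutation-connectedness of cluster variables reflects flip-connectedness of arcs, that a strong isomorphism sends frozen variables to frozen variables, and that the cardinality of the finite connected component is an invariant. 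You instead build the congruence directly: the strong isomorphism carries the initial cluster to a cluster $X_{T''}$ with $T''$ flip-equivalent to $T'$, induces a flip-preserving bijection on reachable arcs, and you conclude via Corollary 3.1 and transitivity of congruence. Your route is more economical -- it needs no case analysis and does not invoke the type classification at all -- but it leans harder on the arc/cluster-variable dictionary and on the bookkeeping required to extend the induced bijection on reachable (and frozen) arcs to a genuine bijection of all of $\overline{\mathbb{S}}$, whereas the paper's contrapositive only needs to read off the type of the triangulation from coarse invariants of the algebra. Both routes are sound, and your flagged obstacle (admissible maps transporting exchange quadrilaterals) is a real issue that the paper's own forward direction also relies on without comment.
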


\begin{proof}

Let $T$ and $T'$ be two triangulations of $\mathbb{S},$ assume that $T\simeq T'.$ Let $\mathcal{A}(T)$ be a cluster
algebra of seed $(X_{T},Q_{T}):=\sum_{T}$, where $Q_{T}$ is a quiver of $T$ and
$X_{T}=\{x_{\gamma}| \gamma \in T\}$ the set of undeterminates. Let $\mathcal{A}(T')$ be a cluster
algebra of seed $(X_{T},Q_{T'}):=\sum_{T}$ where $Q_{T'}$ is a quiver of $T'$ and $X_{T'}=\{u_{\lambda}| \lambda \in T\}$ the set of undeterminates.

          Since $T\simeq T'$, there exists an admissible bijection $\theta$, such that $\theta(T)=T'.$ We denote by $f_\gamma$ the flip of the arc $\gamma.$
We define $\varphi_{\theta}:\mathcal{A}(T)\longrightarrow \mathcal{A}(T')$ on the cluster variable by $\varphi_{\theta}(x_{\gamma})=u_{\theta(\gamma)}.$
We have on one hand
$$\begin{array}{lllll}\varphi_{\theta}(x_{T})&=&\{\varphi_{\theta}(x_{\gamma})| \gamma \in T\}\\ &=&\{u_{\theta(\gamma)}| \gamma \in T\}\\
&=& \{u_{\theta(\gamma)}| \theta(\gamma) \in \theta(T)\}\\
&=&\{u_{\lambda}| \lambda \in T'\}. \end{array}$$

On the other hand, we have
$$\begin{array}{lllll}\varphi_{\theta}(\mu_{x_{\gamma}})&=&\varphi_{\theta}(x_{f(\gamma)})\\ &=&u_{\theta(f(\gamma)}\\
&=& u_{f(\theta(\gamma))}\\
&=&\mu_{u_{\theta(\gamma)}}(u_{\theta(\gamma)}). \end{array}$$

 We extend $\varphi_{\theta}$ to an isomorphism of $\mathbb{Z}$-algebras from $\mathcal{A}(T)$ to $\mathcal{A}(T')$.
Therefore according to [\ref{asdush}], $\mathcal{A}(T)$ and $\mathcal{A}(T')$ are strongly isomorphic.

           Conversely assume that $\mathcal{A}(T)$ and $\mathcal{A}(T')$ are strongly isomorphic and that $T$ and $T'$ are not congruent. Then
there exists a strong isomorphism $\psi:\mathcal{A}(T)\longrightarrow \mathcal{A}(T').$
According to Proposition 3.2, $T$ and $T'$ are not of the same type. We have four cases to consider.

    $(a)$ $T$ is of type $(I)$ and $T'$ is of type $(II).$
Because $T'$ is of type $(II),$ it has two disjoint connected components. Let $u_{\lambda_{1}}$ and  $u_{\lambda_{2}}$ be two cluster variables such that
$\lambda_{1}$ and $\lambda_{2}$ do not belong to the same connected component. Since $\psi$ is a strong isomorphism, there are two arcs $\gamma_{1}$ and $\gamma_{2}$
such that $\psi(x_{\gamma_{1}})=u_{\lambda_{1}}$ and $\psi(x_{\gamma_{2}})=u_{\lambda_{2}}.$ The two arcs $\gamma_{1}$ and $\gamma_{2}$ are related by a sequence
of flips and then the two variables $x_{\gamma_{1}}$ and $x_{\gamma_{2}}$ are related by a sequence of mutations, because $T$ is of type $(I).$
In fact, $\psi$ is a strong isomorphism, then $u_{\lambda_{1}}$ and  $u_{\lambda_{2}}$ are related to a sequence of mutations. Thus $\lambda_{1}$ and $\lambda_{2}$ are
related by a sequence of flips. This is a contradiction, because $\lambda_{1}$ and $\lambda_{2}$ do not belong to the same connected component.

        $(b)$ $T$ is of type $(I)$ and $T'$ is of type $(III)_{k}.$
The proof is analogous of the one in the case $(a).$

         $(c)$ $T$ is of type $(II)$ and $T'$ is of type $(III)_{k}.$
The triangulation $T'$ has a frozen arc $\omega$, then the cluster algebra $\mathcal{A}(T')$ has a frozen cluster variable $u_{\omega}$ in the sense of [\ref{asdush}].
The map $\psi$ is an isomorphism, then it maps a frozen variable $x_{\gamma}$ of $\mathcal{A}(T)$ to a cluster variable  $u_{\omega}=\psi(x_{\gamma}).$ Because $\psi$ is
a strong isomorphism, it maps frozen variable to frozen variable while $\mathcal{A}(T)$ has no
frozen variable. This is a contradiction.

        $(d)$ $T$ is of type $(III)_{k}$ and $T'$ is of type $(III)_{k'}$ with $k\neq k'.$
We assume without loss of generality that $k>k'.$ Let $\omega$ and $\omega'$ be respectively the frozen arcs of $T$ and $T'.$ Because $\psi$ is a strong isomorphism , we have
$\psi(x_{\omega})=u_{\omega'}.$  Since $k>k',$ there is an arc $\gamma$ spanned by $\omega$ such that $\psi(x_{\gamma})=u_{\lambda},$ and $\lambda$ is not spanned by $\omega'.$
The connected component $C_{\gamma}$ is finite, because $\gamma$ is spanned by $\omega$ and the connected component $C_{\lambda}=C\psi(_{\gamma})$ is infinite.
This is a contradiction, because $\psi$ is a strong isomorphism.

\end{proof}

 Now we want to show that each cluster algebras of type $\mathbb{A}_{\infty}$ can be embedded in a cluster algebra arising from $\mathbb{S}$.

\begin{lem}
Let $Q$ be a quiver mutation equivalent to a quiver of type $\mathbb{A}_{\infty}$. Then $Q$ is not a quiver associated to a triangulation of $\mathbb{S}$ if and only if $Q$ has a
subquiver of type $\mathbb{A}_{\infty}$ with linear orientation.

\end{lem}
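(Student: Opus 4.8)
The plan is to read the statement as a dichotomy for quivers in the mutation class of a (connected) orientation of $\mathbb{A}_{\infty}$, and to play the \emph{one-sided} nature of an infinite linearly oriented path against the Holm--J{\o}rgensen fountain symmetry recorded in Lemma 3.1. Since $Q$ is mutation equivalent to a connected quiver of type $\mathbb{A}_{\infty}$ and each mutation is a local operation, $Q$ is itself connected; this connectedness drives both implications, so I would establish it first. I would also fix, once and for all, the dictionary of [\ref{gs}, Theorem 3.11] (equivalently Remark 3.1 and Proposition 3.1) together with the finite correspondence between quivers of type $\mathbb{A}_{n}$ and triangulations of the $(n+3)$-gon from [\ref{ccs}] recalled in Remark 3.2, since the whole argument is an exhaustion of this finite picture.

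For the implication ``$Q$ has a linearly oriented subquiver of type $\mathbb{A}_{\infty}$ $\Rightarrow$ $Q$ is not a triangulation quiver'', I would argue by contradiction. Assume $Q=Q_{T}$ and that $Q$ contains a full subquiver $\gamma_{1}\to\gamma_{2}\to\gamma_{3}\to\cdots$ isomorphic to the linearly oriented $\mathbb{A}_{\infty}$. Reading the arrows of $Q_{T}$ off the triangles of $T$, consecutive arcs $\gamma_{i},\gamma_{i+1}$ are sides of a common triangle with compatibly oriented arrows; an infinite aligned chain of such triangles can only be an infinite fan hinged at a single vertex $p$, so that $p$ is a right-fountain of $T$ (or, symmetrically, a left-fountain) --- this is exactly the mechanism exploited in Remark 3.2. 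The key geometric observation is then that the two fans attached to a fountain never lie in a common triangle (they only \emph{meet at infinity}), so each fan spans its own connected component of $Q_{T}$. By the Holm--J{\o}rgensen symmetry, that a triangulation has a right-fountain if and only if it has a left-fountain (see [\ref{holjor}] and Lemma 3.1), the presence of one fountain forces a second, disjoint fan-component, and $Q_{T}$ is disconnected. This contradicts the connectedness of $Q$, so no such $T$ exists.

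For the converse I would prove the contrapositive: a connected quiver $Q$ of type $\mathbb{A}_{\infty}$ with \emph{no} infinite linearly oriented path is the quiver of a triangulation. The absence of such a path means that no vertex of the triangulation to be built carries an infinite fan, i.e. the triangulation will have no fountain and will be locally finite, hence of Type $(I)$. Concretely, I would exhaust $Q$ by an increasing chain of finite connected subquivers $Q^{(1)}\subset Q^{(2)}\subset\cdots$, realise each $Q^{(n)}$ as the quiver of a triangulation of a polygon via [\ref{ccs}] (Remark 3.2), and glue these polygon triangulations compatibly into a set of non-crossing arcs $T=\bigcup_{n}T^{(n)}$ of $\mathbb{S}$. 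The no-linear-path hypothesis is used precisely to guarantee that the gluing never accumulates infinitely many arcs at a single vertex, so that $T$ is genuinely locally finite and maximal; by Proposition 3.1 it is then a Type $(I)$ triangulation, and by construction $Q_{T}\cong Q$.

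I expect the constructive converse to be the main obstacle: turning the combinatorial ``no monotone tail'' condition into a controlled, compatible choice of polygon triangulations, and verifying that the resulting infinite arc system is locally finite and maximal, is the delicate point. The cleanest route is probably to arrange the exhaustion so that each $Q^{(n)}$ is an interval of $Q$ and each $T^{(n)}$ is obtained from $T^{(n-1)}$ by triangulating one further polygon, so that the local finiteness of $T$ reduces to the elementary fact that a non-monotone (zigzag) orientation adds only boundedly many arcs at each vertex --- a direct passage to the limit of the finite $\mathbb{A}_{n}\leftrightarrow(n+3)$-gon correspondence.
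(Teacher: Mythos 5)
Your proposal follows the same core mechanism as the paper's proof: an infinite linearly oriented path in $Q_{T}$ forces an infinite fan of arcs hinged at a single vertex, hence a one-sided fountain, and this collides with the Holm--J{\o}rgensen symmetry (a triangulation of $\mathbb{S}$ has a right-fountain if and only if it has a left-fountain). Where you differ is in how the contradiction is extracted, and your version is the more robust one. The paper reduces to the case $Q_{T}=R$ with $R$ the linearly oriented ray and concludes that $T$ would have a left-fountain but no right-fountain; strictly speaking that reduction is not justified when $Q$ merely \emph{contains} $R$ as a subquiver, since the remaining arcs of $T$ could a priori supply the missing fountain. Your route closes exactly this loophole: you let the Holm--J{\o}rgensen symmetry produce the second fountain, observe that the two fans never share a triangle of $T$ and hence lie in distinct connected components of $Q_{T}$, and contradict the connectedness of any quiver mutation equivalent to $\mathbb{A}_{\infty}$. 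This is consistent with Example 3.2, where the fountain triangulation yields a disconnected quiver. For the converse (no linearly oriented tail implies realizability by a triangulation), the paper offers essentially a one-sentence assertion deferring to the classification in [\ref{gs}, Theorem 3.11]; your exhaustion-and-gluing construction is more informative and correctly identifies local finiteness of the limiting arc system as the crux, but, as you acknowledge, the compatibility of the successive polygon triangulations and the maximality of the union are stated rather than verified, so on this half you are at the same level of completeness as the paper rather than beyond it.
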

\begin{proof}
 Assume that $Q$ has a subquiver of type $\mathbb{A}_{\infty}$ with orientation not necessarily linear. $Q$ is a connected quiver, Q has a subquiver of type $\mathbb{A}_{\infty}$
 with orientation not necessarily linear and $Q$ is mutation equivalent to a quiver of type  $\mathbb{A}_{\infty}.$ There is a triangulation $T$ of $\mathbb{S}$ such that
 $Q_T\cong Q$.

               Conversely, assume that $Q$ has a subquiver of type $\mathbb{A}_{\infty}$ with linear orientation. It is sufficient to show that the quiver $R$: $1\longrightarrow
 2\longrightarrow ...$
is not the quiver associated to any triangulation. Suppose that there exists a triangulation $T$ such that $Q_T=R$. We denote by $\tau_i$ the arc of $T$ corresponding to the vertex $i$.
All $\tau_i$ , where $i$ is a non-negative integer, have the same origin and are the arcs of the same half-line. $T=\{\tau_{i}| i\geq 1\}$ is a triangulation of $\mathbb{S}$ with left-fountain,
but no right-fountain. This is a contradiction see [\ref{holjor}].
\end{proof}

\begin{cor}
Let $Q$ be a quiver mutation equivalent to a quiver of type $\mathbb{A}_{\infty}$. Let $\underline{u}=\{u_{i}| i\geq 1\}$ the set of undeterminates attached to the vertices of $Q.$ Then
there exists a seed
$\Sigma_T=(\underline{x}_T,Q_T)$ associated to a triangulation $T$ of $\mathbb{S}$ and a cluster embedding $\eta:\mathcal{A}(\underline{u},Q)\hookrightarrow \mathcal{A}(\Sigma_T).$

\end{cor}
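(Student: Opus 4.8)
The plan is to use Lemma 3.5 to split into two cases and, in the non-trivial one, to realize $Q$ as a union of connected components of the quiver of an explicit triangulation, so that the embedding arises from the complementary deletion morphism.

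First I would dispose of the easy case. If $Q$ is itself the quiver $Q_T$ of some triangulation $T$, then $\mathcal{A}(\underline u,Q)$ and $\mathcal{A}(\Sigma_T)$ are the same rooted cluster algebra (up to relabelling arcs by vertices), and $\eta$ may be taken to be the identity isomorphism. By Lemma 3.5, the only remaining case is when $Q$ contains a subquiver of type $\mathbb{A}_{\infty}$ with linear orientation; as in the proof of that lemma, the obstruction is carried by the linearly oriented half-line $R:1\to 2\to 3\to\cdots$, which geometrically is a one-sided fan (a fountain on a single side) and therefore fails to triangulate $\mathbb{S}$.

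Next I would complete this one-sided fan to a genuine triangulation. Taking the double fan $T=\{(-n,0),(0,n)\mid n\geq 2\}$ of Example 3.2, its associated quiver $Q_T$ is the disjoint union of two linearly oriented copies of $\mathbb{A}_{\infty}$ (the two half-fans share no triangle, hence no arrow), one of which is isomorphic to $R$. More generally, since $Q$ is connected and realizes a one-sided fountain, I would attach the missing mirror half-line disjointly so as to obtain a valid triangulation $T$ whose quiver $Q_T$ has $Q$ as a full connected component, the double fan serving as the model case. Writing $\gamma_i\in T$ for the arc at the vertex $i$ of $Q\subset Q_T$, I set $\eta(u_i)=x_{\gamma_i}$ and extend over mutations: because $Q$ is a union of connected components of $Q_T$, any $Q$-admissible mutation at a vertex of $Q$ uses exactly the exchange relation of $Q$ (no arrow reaches the complementary component), so $\eta$ sends cluster variables to cluster variables and extends to a ring homomorphism satisfying (CM1)--(CM3).

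Finally I would prove injectivity by exhibiting a retraction, in the style of Section 2. Specializing to $1$ every variable attached to the complementary component $Q''=Q_T\setminus Q$ is a deletion, hence a rooted cluster morphism $\pi:\mathcal{A}(\Sigma_T)\to\mathcal{A}(\underline u,Q)$ (compare the construction of the $p_{i,j}$ and [\ref{asdush}, Proposition 6.10]), and by construction $\pi\circ\eta=\mathrm{id}$; thus $\eta$ is injective, and being a rooted cluster morphism it is a cluster embedding. The main obstacle is the completion step: one must check that the missing side can always be attached disjointly so that $Q$ genuinely becomes a union of connected components of a bona fide triangulation quiver $Q_T$. This disjointness is exactly what guarantees (CM3) and forces $\pi\circ\eta=\mathrm{id}$, and it is here that the classification of the quivers $Q_T$ [\ref{gs}, Theorem 3.11] together with Lemma 3.5 is used.
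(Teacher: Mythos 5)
Your proposal follows essentially the same route as the paper: dispose of the case where $Q$ is already a triangulation quiver via the identity, and otherwise (using Lemma 3.5) adjoin a disjoint linearly oriented copy of $\mathbb{A}_{\infty}$ so that $Q$ becomes a connected component of the quiver $Q_T$ of an honest triangulation, with $\eta$ induced by the inclusion of quivers. Your retraction argument (specializing the complementary component to $1$, in the spirit of the morphisms $p_{i,j}$ of Section~2, to force $\pi\circ\eta=\mathrm{id}$) is a welcome explicit justification of injectivity that the paper leaves implicit, but it does not change the underlying strategy.
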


\begin{proof}
Assume that $Q$ is a  mutation equivalent to a quiver of type $\mathbb{A}_{\infty}$. If $Q$ has a subquiver with orientation not necessarily linear, then  $Q$ is the quiver associated to a
triangulation
of $\mathbb{S}$; in this case $\eta$ is the identity morphism. Hence the result.

        Assume now that $Q$ has a subquiver of type $\mathbb{A}{\infty}$ with linear orientation. By Lemma 3.5, $Q$ is not the quiver of any triangulation of $\mathbb{S}$. We define the quiver $R$
of type $\mathbb{A}_{\infty}$ with linear orientation distinct to the one of $Q.$
The quiver $Q\cup R$ is the quiver associated to a triangulation $T$ of $\mathbb{S}.$ Since the quivers $Q$ and $R$ are non-intersecting connected quivers, the inclusion of the quiver $\eta_0:Q\subset Q\cup R$ induces an embedding of cluster algebras $\eta:\mathcal{A}
(\underline{u},Q)\hookrightarrow \mathcal{A}(\Sigma_T).$ Because the corestriction $\eta_0:Q\longrightarrow \eta_0(Q)=Q$ is the identity map, the $\mathbb{Z}$-morphism $\eta$ is a cluster morphism.
\end{proof}

\section{The cluster category of associated to $\mathbb{S}$}

 \subsection{The infinite cluster category  of type $A_{\infty}$}

We recall the description of the infinite cluster category given in [\ref{pj}, \ref{holjor}].
Let $K$ be a field and $R=K[T]$ be the polynomial algebra. We view $R$ as a differential graded algebra with zero differential and $T$ placed
in homological
 degree 1. Then we set
$D^{f}(R)$ be the derived category of differential graded $R$-modules with finite dimensional homology over $K$, then $\mathcal{D}=D^{f}(R)$
is the infinite cluster category of type $\mathbb{A}_{\infty}$. The suspension and the
Serre functor of $\mathcal{D}$ are denoted by $\Sigma$ and $S$ respectively.
The category $\mathcal{D}$ is a $K$-linear, Hom-finite, Krull-Schmidt,
triangulated and 2-Calabi-Yau category whose Auslander-Reiten quiver is of the form $\mathbb{Z}\mathbb{A}_{\infty}$, we refer to [\ref{pj}].
The Auslander-Reiten translation
of $\mathcal{D}$ is $\tau=S\Sigma^{-1}=\Sigma.$
For a given integer $r\geq 0$,
 we have
a differential graded $R$-module $X_{r}=R/(T^{r+1})$ which is concentrated in homological degrees from 0 to $r$. The indecomposable objects
 of $\mathcal{D}$ are
$\Sigma^{j}X_{r}$ for $j,r$ integers, $r\geq 0$ and $\Sigma$ the shift of $\mathcal{D}$. The Auslander-Reiten quiver $\Gamma(\mathcal{D})$
of $\mathcal{D}$ is of the form

\[\xymatrix@!@-1.00pc@R-1.00pc@C-1.00pc{
&&&&&&& \vdots &&&& \\
&& \bullet \ar@{.>}[dr]  & &  \bullet \ar@{.>}[dr] & &  \bullet \ar@{.>}[dr] & &   \bullet \ar[dr] & &   \bullet \ar[dr] & &   \bullet  \\
 && &  \bullet  \ar[dr] \ar[ur]  & &  \bullet \ar[dr] \ar[ur] & &   \bullet  \ar[dr] \ar@{.>}[ur] & &   \bullet  \ar[dr]\ar[ur]  & &
 \bullet \ar[dr] \ar[ur] &&&&&&&&&& \\
\ldots && \bullet \ar[dr] \ar[ur] & & \bullet  \ar[dr] \ar[ur]  & &   \bullet \ar[dr] \ar[ur] & &   \bullet \ar[dr] \ar@{.>}[ur] & &
\bullet  \ar[dr]\ar[ur] & &  \bullet && \ldots  \\
&&&  \bullet \ar[dr] \ar[ur] & &  \bullet \ar[dr] \ar[ur] & &   \bullet \ar[dr] \ar[ur] & &   \bullet \ar[dr] \ar@{.>}[ur] & &
\bullet \ar[dr] \ar[ur] \\
&& \bullet \ar[ur] \ar[dr]  & &  \bullet \ar[ur] \ar[dr] & &  \bullet \ar[ur] \ar[dr] & &   \bullet \ar[ur] \ar[dr]& &   \bullet
\ar@{.>}[ur] \ar[dr] & &   \bullet  \\
&&&  \bullet  \ar[ur] \ar[dr] & &  \bullet  \ar[ur] \ar[dr]  & &   \bullet  \ar[ur] \ar[dr] & &   \bullet  \ar[ur] \ar[dr]  & &
\bullet  \ar@{.>}[ur] \ar[dr]  \\
&&  \bullet  \ar[ur]  & &  \bullet  \ar[ur] & &  \bullet \ar[ur] & &  \bullet  \ar[ur] & &   \bullet  \ar[ur]  & &   \bullet \\
  }\]

By using the identification $(m,n):=\Sigma^{-n}X_{n-m-2}$, we have
the following representation of the quiver $\Gamma(\mathcal{D})$
\begin{center}
{\small
\[\xymatrix@R=6pt@C=1pt{
&&&&&&& &&&& \\
&& \ldots \ar[dr] & & \vdots \ar[dr] & & \vdots \ar[dr] & & \vdots \ar[dr] & &
\vdots \ar[dr] & & \ldots && \\
&&& (-4,1) \ar[dr] \ar[ur] & & (-3,2) \ar[dr] \ar[ur] & & (-2,3) \ar[dr] \ar[ur] & & (-1,4) \ar[dr] \ar@{.>}[ur] & &
(0,5) \ar[dr] \ar[ur] \\
&& \ldots (-4,0) \ar[ur] \ar[dr] & & (-3,1) \ar[ur] \ar[dr] & & (-2,2) \ar[ur] \ar[dr] & & (-1,3) \ar[ur] \ar[dr]& & (0,4)
\ar@{.>}[ur] \ar[dr] & & (1,5) \ldots \\
&&& (-3,0) \ar[ur] \ar[dr] & & (-2,1) \ar[ur] \ar[dr] & & (-1,2) \ar[ur] \ar[dr] & & (0,3) \ar[ur] \ar[dr] & &
(1,4) \ar@{.>}[ur] \ar[dr] \\
&& (-3,-1) \ar[ur] & & (-2,0) \ar[ur] & & (-1,1) \ar[ur] & & (0,2) \ar[ur] & & (1,3) \ar[ur] & & (2,4) \\
}\]
}

\end{center}

        The identification of the indecomposable objects $\Sigma^{-n}X_{n-m-2}$ of $\mathcal{D}$ given by $(m,n):=\Sigma^{-n}X_{n-m-2}$
is called the $standard$ $coordinates$ system on $\Gamma(\mathcal{D})$.
         The morphisms between indecomposable objects are described as follows:
Let $x=\Sigma^{-j}X_{j-i-2}$ be a vertex of the Auslander-Reiten quiver of $\mathcal{D}$, we define the sets $H^{-}(x)$ and $H^{+}(x)$
of  vertices of the Auslander-Reiten quiver as $$H^{-}(x)=\{\Sigma^{-n}X_{n-m-2}| m\leq i-1, i+1\geq n \leq j-1\}$$ and
$$H^{+}(x)=\{\Sigma^{-n}X_{n-m-2}|  i+1\geq m \leq j-1, j+1\leq n\}.$$
 We write $H(x)=H^{-}(x)\cup H^{+}(x)$. This situation can be sketched as follows.

 \[
\xymatrix @-4.5pc @! {
    &&&*{} &&&&&&&& *{}&& \\
    &&&& *{} \ar@{--}[ul] & & & & & & *{} \ar@{--}[ur] \\
    &*{}&& H^-(x) & & & & & & & & H^+(x) && *{}\\
    &&*{}\ar@{--}[ul]& & & & {\Sigma x \hspace{3ex}} \ar@{-}[ddll] \ar@{-}[uull] & {x} & {\hspace{3ex} \Sigma^{-1} x} \ar@{-}[ddrr] \ar@{-}[uurr]&
     & &&*{}\ar@{--}[ur]&\\
    && \\
    *{}\ar@{--}[r]&*{} \ar@{-}[rrr] && &  *{} \ar@{-}[uull] \ar@{-}[rrrrrr]& & & & & &  *{} \ar@{-}[uurr]\ar@{-}[rrr]&&&*{}\ar@{--}[r]&*{}\\
           }
\]
Moreover we have

 $H^{-}(x)=\{\Sigma^{-n}X_{n-m-2}/ m\leq i-1, i+1\geq n \leq j-1\}$ and $H^{+}(x)=\{\Sigma^{-n}X_{n-m-2}/  i+1\geq m \leq j-1, j+1\leq n\}$.
 We write $H(x)=H^{-}(x)\cup H^{+}(x)$.

          The following proposition in [\ref{holjor}] characterizes the morphisms of $\mathcal{D}.$
\begin{prop}
Let $x$ and $y$ be two indecomposable objects of $\mathcal{D}$. Then

$\begin{array}{cccc}
{}& Hom_{\mathcal{D}}(x,y)&= &\left\{
\begin{array}{lll} K\quad \texttt{if}\quad{ y\in H(\Sigma x)}
\\ 0\quad\quad \texttt{if not}\end{array}\right.\end{array}$

\end{prop}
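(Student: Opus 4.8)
The plan is to compute $\operatorname{Hom}_{\mathcal{D}}$ directly from the differential graded model $\mathcal{D}=D^{f}(R)$, with $R=K[T]$, $|T|=1$ and zero differential, and then to translate the resulting numerical condition into the combinatorial hammock. Write $x=(i,j)=\Sigma^{-j}X_{j-i-2}$ and $y=(m,n)=\Sigma^{-n}X_{n-m-2}$ in the standard coordinates, and set $r=j-i-2\geq 0$ and $s=n-m-2\geq 0$. Since $\Sigma$ is an autoequivalence, $\operatorname{Hom}_{\mathcal{D}}(x,y)\cong\operatorname{Hom}_{\mathcal{D}}(X_{r},\Sigma^{k}X_{s})$ with $k=j-n$, so it suffices to compute the latter for all integers $k$ and all $r,s\geq 0$.

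First I would record the elementary building block $\operatorname{Hom}_{\mathcal{D}}(\Sigma^{a}R,\Sigma^{k}X_{s})=H_{a}(\Sigma^{k}X_{s})=(X_{s})_{a-k}$, which is one-dimensional precisely when $k\leq a\leq k+s$ and is zero otherwise, since $X_{s}=R/(T^{s+1})$ has one-dimensional homology in each degree $0,1,\dots,s$ and none elsewhere. Next I would resolve $X_{r}$ by the short exact sequence of $\mathrm{DG}$-modules $0\to\Sigma^{r+1}R\xrightarrow{\,\cdot T^{r+1}\,}R\to X_{r}\to 0$, which yields the triangle $\Sigma^{r+1}R\to R\to X_{r}\to\Sigma^{r+2}R$ in $\mathcal{D}$. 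Applying the cohomological functor $\operatorname{Hom}_{\mathcal{D}}(-,\Sigma^{k}X_{s})$ and inserting the building block produces a long exact sequence in which $\operatorname{Hom}_{\mathcal{D}}(X_{r},\Sigma^{k}X_{s})$ sits in a short exact sequence $0\to\operatorname{coker}\delta\to\operatorname{Hom}_{\mathcal{D}}(X_{r},\Sigma^{k}X_{s})\to\ker\gamma\to 0$, where $\gamma$ and $\delta$ are both induced by multiplication by $T^{r+1}$ on $X_{s}$.

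The core of the argument is the analysis of these two maps. Multiplication by $T^{r+1}$ carries $(X_{s})_{p}$ isomorphically onto $(X_{s})_{p+r+1}$ exactly when $0\leq p\leq s-r-1$ and is zero otherwise; feeding in the relevant internal degrees shows that $\ker\gamma$ is nonzero precisely for $-s\leq k\leq\min(0,r-s)$, while $\operatorname{coker}\delta$ is nonzero precisely for $\max(2,r+2-s)\leq k\leq r+2$. The key observation is that these two ranges of $k$ are disjoint, since the first forces $k\leq 0$ and the second forces $k\geq 2$; hence $\operatorname{Hom}_{\mathcal{D}}(X_{r},\Sigma^{k}X_{s})$ is at most one-dimensional and equals $K$ exactly on their union. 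I expect this disjointness-and-casework step to be the main obstacle, as it is what simultaneously delivers the one-dimensionality and pins down the precise support.

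Finally I would translate the two surviving ranges back through $k=j-n$, $r=j-i-2$, $s=n-m-2$, together with $\Sigma x=(i-1,j-1)$. A direct substitution turns the first range into the inequalities $i\leq m\leq j-2$ and $n\geq j$, which are exactly the defining conditions of $H^{+}(\Sigma x)$, and the second range into $m\leq i-2$ and $i\leq n\leq j-2$, which are exactly those of $H^{-}(\Sigma x)$. One checks along the way that in both cases $n-m\geq 2$, so $y$ is automatically a genuine object. This gives $\operatorname{Hom}_{\mathcal{D}}(x,y)=K$ when $y\in H(\Sigma x)=H^{-}(\Sigma x)\cup H^{+}(\Sigma x)$ and $\operatorname{Hom}_{\mathcal{D}}(x,y)=0$ otherwise, as claimed.
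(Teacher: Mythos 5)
Your argument is correct, but you should be aware that the paper does not actually prove this proposition: it is quoted from Holm and J{\o}rgensen [\ref{holjor}], who in turn rely on J{\o}rgensen's computation of morphism spaces in $D^f(K[T])$ from [\ref{pj}]. What you have written is essentially a self-contained reconstruction of that source computation, and the steps check out. The triangle $\Sigma^{r+1}R\to R\to X_r\to\Sigma^{r+2}R$ obtained from the short exact sequence $0\to\Sigma^{r+1}R\xrightarrow{\ \cdot T^{r+1}\ }R\to X_r\to 0$ is the right resolution; the identity $\operatorname{Hom}_{\mathcal{D}}(\Sigma^aR,\Sigma^kX_s)=(X_s)_{a-k}$ holds because $R$ is the free DG module; and your determination of the supports of $\ker\gamma$ and $\operatorname{coker}\delta$ (nonzero exactly for $-s\le k\le\min(0,r-s)$, respectively for $\max(2,r+2-s)\le k\le r+2$) is correct, as is the crucial observation that these ranges are disjoint ($k\le 0$ versus $k\ge 2$), which simultaneously yields one-dimensionality and the precise support. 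The final substitution $k=j-n$, $r=j-i-2$, $s=n-m-2$ gives $i\le m\le j-2$, $n\ge j$ for the first range and $m\le i-2$, $i\le n\le j-2$ for the second; these are exactly $H^{+}(\Sigma x)$ and $H^{-}(\Sigma x)$ for $\Sigma x=(i-1,j-1)$, once one reads the paper's garbled displayed conditions (e.g.\ ``$i+1\geq n\leq j-1$'') as the intended chains $i+1\leq n\leq j-1$ and $i+1\leq m\leq j-1$. In short: the paper obtains the statement by citation, while your route obtains it by an elementary long-exact-sequence computation in the derived category; the extra work pays off by making the hammock shape of $H(\Sigma x)$, and the distinction between the $\ker$-part (forward morphisms) and the $\operatorname{coker}$-part (backward morphisms in the infinite radical), completely explicit.
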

The following description is due to holm and J{\o}rgensen [\ref{holjor}, Remark 2.4]
\begin{rem}
There are two distinct types of non-zero morphisms going from $x$ to indecomposable objects of $\mathcal{D}$: those going to objects in $H^{+}(x)$
are called $forward$ $morphisms$, and those going to objects $H^{-}(x)$
are called $backward$ $morphisms$.

   The forward morphisms have an easy model: up to multiplication by a nonzero
scalar, they are induced by certain canonical morphisms of differential graded modules.
the $backward$ $morphisms$ cannot be seen in the Auslander-Reiten quiver; they are in the infinite radical of $\mathcal{D}$.
\end{rem}

 \subsection{The category of diagonals of the infinity-gon}

 In this section we provide a geometric realization of the category $\mathcal{D}$.

         We adopt the same philosophy as that of [\ref{holjor}], that is, the integers can be viewed as the vertices of the infinity-gon
  and the pairs of integers can be viewed as the arcs of the infinity-gon. Let $(m,n)$ be an arc of infinity-gon, with $m< n$. If $n-m=1,$ we
  say that the arc $(m,n)$ is a boundary arc, and if $m\leq n-2,$ we say that $(m,n)$ is a diagonal of the infinity-gon.
  Our construction is similar to that of [\ref{ccs}] in the case of the $(n+3)$-gon.

       One can define a combinatorial $K$-linear category $\mathcal{C}$ as follows:

        The indecomposable objects are the arcs $(m,n)$ of $\mathbb{S},$ with $m,n\in \mathbb{Z}$ and $m\leq n-2$; the objects of $\mathcal{C}$
  are the linear combinations of the arcs and each arc is stable by the product of the scalars of $K$. The boundary arcs are identified to zero.
  The space of morphisms between two arcs $(m,n)$ and $(p,q)$ of this section is given by:

$\begin{array}{cccc}
{}& Hom_{\mathcal{C}}((m,n),(p,q))&= &\left\{
\begin{array}{lll} K\quad \texttt{if}\quad{ (p,q)\in F_R^{(m,n)}\cup F_L^{(m,n)}}
\\ 0\quad\quad \texttt{if not}\end{array}\right.\end{array}$ \\
where $F_R^{(m,n)}=\{(l,k)\mid m\leq l\leq n-2, k\geq n\}$ and $F_L^{(m,n)}=\{(k,s)\mid
m+2\leq s\leq n, k\leq m \}$. The morphisms between two objects are direct sums of morphisms between arcs.
The composition of morphisms between arcs is given by the product
of scalars in $K.$
The construction of
$\mathcal{C}$ is inspired by the standard coordinates used in  [\ref{holjor}]. The category $\mathcal{C}$ is a category
generated by all the diagonals of $\mathbb{S}$. Therefore by construction $\mathcal{C}$ is $K$-linear, Hom-finite and Krull-Schmidt.
Our main result of this section is the following.
\begin{theo}
The categories $\mathcal{C}$ and $\mathcal{D}$ are equivalent.

\end{theo}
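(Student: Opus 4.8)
The plan is to use the standard coordinates system to turn the combinatorial description of $\mathcal{C}$ into the morphism description of $\mathcal{D}$ supplied by Proposition 4.1. Define a $K$-linear functor $F\colon\mathcal{C}\longrightarrow\mathcal{D}$ on indecomposables by sending the arc $(m,n)$ to the indecomposable object $\Sigma^{-n}X_{n-m-2}$, that is, to the object carrying the standard coordinates $(m,n)$, and extend $F$ additively by sending a finite direct sum of arcs to the corresponding direct sum of indecomposables. Since the indecomposables of both categories are indexed by the same pairs $(m,n)$ with $m\le n-2$, the map $F$ is a bijection on isomorphism classes of indecomposables; as every object of each category is a finite direct sum of indecomposables and both categories are Krull--Schmidt, this already makes $F$ dense. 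It remains to prove that $F$ is fully faithful.

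First I would match the supports of the Hom-spaces. Every morphism space in $\mathcal{C}$ and in $\mathcal{D}$ is either $0$ or a copy of $K$, so full faithfulness reduces to showing that $\mathrm{Hom}_{\mathcal{C}}((m,n),(p,q))$ and $\mathrm{Hom}_{\mathcal{D}}(F(m,n),F(p,q))$ vanish simultaneously, together with a compatibility of composition. From the identification $(m,n)=\Sigma^{-n}X_{n-m-2}$ one computes $\Sigma(m,n)=(m-1,n-1)$, so Proposition 4.1 gives $\mathrm{Hom}_{\mathcal{D}}(F(m,n),F(p,q))\neq 0$ precisely when $(p,q)\in H(\Sigma(m,n))=H^{+}(m-1,n-1)\cup H^{-}(m-1,n-1)$. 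A direct comparison of the defining inequalities then identifies $H^{+}(m-1,n-1)$ with the right region $F_R^{(m,n)}$ and $H^{-}(m-1,n-1)$ with the left region $F_L^{(m,n)}$. Thus the Hom-supports coincide, and under $F$ the forward morphisms of $\mathcal{D}$ correspond to the $F_R$-part of $\mathcal{C}$ while the backward morphisms correspond to the $F_L$-part.

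Next I would define $F$ on morphisms by sending the canonical generator of each nonzero $\mathrm{Hom}_{\mathcal{C}}((m,n),(p,q))$ to a fixed generator of the corresponding $\mathrm{Hom}_{\mathcal{D}}$, and verify functoriality. Because composition in $\mathcal{C}$ is given by multiplication of the scalars, this amounts to checking that in $\mathcal{D}$ the composite of two generating morphisms is again a nonzero multiple of the generator of the composed hammock exactly when the final object still lies in the relevant region $H(\Sigma\,\cdot)$, and is zero otherwise, so that the two vanishing patterns agree. I would carry this out case by case along the forward/backward dichotomy, using the model of the forward morphisms as the canonical maps of differential graded modules together with the mesh relations of $\Gamma(\mathcal{D})$.

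The main obstacle is precisely this last step, and within it the backward morphisms. Since the backward morphisms lie in the infinite radical of $\mathcal{D}$ and are invisible in the Auslander--Reiten quiver, their composites cannot be read off from the meshes, so the relevant structure constants must be pinned down directly; here I expect to use the $2$-Calabi--Yau property and Serre duality to reduce composites involving backward morphisms to composites of forward ones. Once this bookkeeping of structure constants is settled, $F$ is a fully faithful dense $K$-linear functor, hence an equivalence $\mathcal{C}\simeq\mathcal{D}$, as claimed.
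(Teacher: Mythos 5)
Your proposal follows essentially the same route as the paper: the same functor $F$ sending $(m,n)$ to $\Sigma^{-n}X_{n-m-2}$, the same identification of $F_R^{(m,n)}$, $F_L^{(m,n)}$ with $H^{+}(\Sigma x)$, $H^{-}(\Sigma x)$, and the same forward/backward case analysis with Serre duality handling the backward morphisms. The composition bookkeeping you defer as the "main obstacle" is exactly what the paper settles by invoking Holm--J{\o}rgensen's Lemmas 2.5 and 2.7 and Corollary 2.3, so your plan is sound and complete modulo those citations.
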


\begin{proof}
Let $F_{0}:ind\mathcal{C}\longrightarrow ind\mathcal{D}$ be such that, for $(m,n)\in ind\mathcal{C}$ we have  $F_{0}(m,n)=
\Sigma^{-n}X_{n-m-2}.$ According to [\ref{holjor}], $F_{0}$ is a bijection. One can define the additive
 functor $F:\mathcal{C}\longrightarrow \mathcal{D}$ as follows:

       $F(m,n)=F_{0}(m,n),$ and we extend $F$ by additivity and $K$-linearity to all objects of $\mathcal{C}.$
 Let $u_{\alpha}:(m,n)\longrightarrow
 (p,q)$ be a morphism of $\mathcal{C}$ which is identified
 with the scalar $\alpha$ of $K$.

        We recall that, via the standard coordinates defined above, if $F(m,n)=x$ and $F(p,q)=y$
  then
 $(p,q)\in F_{R}^{(m,n)}$ if and only if $y\in H^{+}(\Sigma x)$.  We have also  $(p,q)\in F_{L}^{(m,n)}$ if and only if $y\in H^{-}(\Sigma x)$.

         On the one hand, if $(p,q)\in F_{R}^{(m,n)}$ then
  $y\in H^{+}(\Sigma x)$; let $f:x\longrightarrow y$ be a forward morphism of $\mathcal{D}$ that is $f$ is induced by a canonical morphism
         of $DG$-modules. Then each morphism from $x$
 to $y$ is of the form $\lambda f$ where $\lambda\in K$ and we set $F(u_{\alpha})=\alpha f=f_{\alpha}$. On the other hand, if
 $(p,q)\in F_{L}^{(m,n)}$ then
  $y\in H^{-}(\Sigma x)$; let $\bar{g}:x\longrightarrow y$ be a backward morphism, because the category $\mathcal{D}$ is $2$-Calabi-Yau that is
  Hom$_{\mathcal{D}}(x,y)$$=$$D$Hom$_{\mathcal{D}}(y,S(x))$, where $D$$=$Hom$(-,K)$ is the usual duality.
  The morphism $\bar{g}$ is the
  isomorphic image of a forward morphism $g:y\longrightarrow \Sigma^{2}x$.
  We set $F(u_{\alpha})=\alpha \bar{g}=\bar{g}_{\alpha}$ and $F(1_{(m,n)})=1_{x}.$ Let us show now that $F$ is a functor.
  Let  $u_{\alpha}:(m,n)\longrightarrow (p,q)$ and  $u_{\beta}:(p,q)\longrightarrow (r,s)$ where $F(m,n)=x$, $F(p,q)=y$ and $F(r,s)
  =z.$ The proof is completed in three steps $(a)(b)(c)$.

       $(a)$ If $(p,q), (r,s)\in F_{R}^{(m,n)}$ and $(r,s)\in F_{R}^{(p,q)}$ then $y,z\in H^{+}(\Sigma x)$ and $z\in H^{+}(\Sigma y)$.
       We have $F(u_{\alpha})=\alpha f$ and $F(u_{\beta})=
  \beta g$, where $f:x\longrightarrow y$ and $g:y\longrightarrow z$ are forward morphisms. The morphism
  $u_{\beta}u_{\alpha}=u_{\beta \alpha}$ is a morphism from $(m,n)$ to $(p,q)$. Then $F(u_{\beta \alpha})=\beta \alpha h$, where
  $h:x\longrightarrow z$ is a forward morphism of $\mathcal{D}$.
  According to [\ref{holjor}, Lemma 2.5], the morphism $g$ is nonzero and we have the following commutative triangle

$$ \xymatrix{x\ar[rr]^{h}\ar[dr]_{f'}&&z \\
&y\ar[ur]_{g}&} $$
where $f'$ is the morphism induced by a canonical morphism of differential graded modules. By uniqueness of the canonical morphism between
two indecomposables objects, we have $f'=f$ and thus $F(u_{\beta \alpha})=
F(u_{\beta})F({\alpha}).$

        $(b)$ If $(p,q), (r,s)\in F_{g}^{(m,n)}$ and $(r,s)\in F_{d}^{(p,q)}$ then $y,z\in H^{-}(\Sigma x)$ and $z\in H_{+}(\Sigma y).$ We have
$F(u_{\alpha})=\alpha \bar{f}$ and $F(u_{\beta})=\beta g$  where  $g:y\longrightarrow z$ is a morphism induced by a canonical morphism
of differential graded modules. So, $\bar{f}:x\longrightarrow y$ is the isomorphic image of a morphism $f:y\longrightarrow \Sigma^{2}x$ induced by a
canonical morphism of differential graded modules. Since $g$ is a nonzero morphism, in accordance with [\ref{holjor}, Lemma 2.7], we have the
following commutative
 triangle

 $$ \xymatrix{x\ar[rr]^{\bar{h}}\ar[dr]_{\bar{f'}}&&z \\
&y\ar[ur]_{g}&} $$
where $\bar{h}$ is the isomorphic image of a forward morphism $h$ and $\bar{f'}$ is
the image of the morphism $f':y\longrightarrow \Sigma^{2}x$ which is induced by the canonical morphism of $DG$-modules from $y$ to
$\Sigma^{2}x.$  By uniqueness of the canonical morphism between
two indecomposables objects, we have $\bar{f'}=\bar{f}$ and hence $F(u_{\beta \alpha})=F(u_{\beta})F({\alpha}).$ For all other cases not
mentioned above, the
composition of morphisms are equal to zero see [\ref{holjor}, Corollary 2.3]. This shows that $F$ is a functor.

$(c)$ $F$ is essentially surjective because by the definition, each indecomposable module of $\mathcal{D}$ is the image of an arc of
$\mathcal{C}$ under $F.$

           The map $F$:Hom$_{\mathcal{C}}((m,n),(p,q))\longrightarrow $Hom$_{\mathcal{D}}(x,y)$ which associates to $u_{\alpha},$ the
function $F(u_{\alpha})$ is a bijection because of the step $(a)$ and $(b)$. Therefore $F$ is full and faithful.

    Finally, it follows from $(a), (b), (c)$ that $F$ is an equivalence.

\end{proof}
We can give now the description of the category $\mathcal{C}$, via the equivalence established above; clearly, the category $\mathcal{C}$ is
triangulated, $2$-Calabi-Yau and has Auslander-Reiten triangles. In addition, the suspension is given by $(m,n)[1]=(m-1,n-1)$ and the
Serre functor is given by $S(m,n)=(m-2,n-2);$ this situation was predictable from  Holm and Jorgensen in [\ref{holjor}].

     We have also the following operations between the arcs of $\mathbb{S}$ defined by:
${}_{s} \!(m,n)=(m+1,n)$ and $(m,n)_{e}=(m,n+1).$ These operations are defined for the $n+3$-gon in [\ref{ccs}] and for marked surfaces without
punctures
in [\ref{bruzh}]. The operations ${}_{s} \!(m,n)$ and $(m,n)_{e}$ can be extended as functors in the category $\mathcal{C}.$

\begin{prop}
The following statements are equivalent.\\
(a) Hom$((i,j),(p,q)[1])\neq 0$\\
(c)$(i,j)$ and $(p,q)$ cross\\
(d) $(p,q)={}_{s^z} \!(i,j)_{e^{n}} $ or $(p,q)={}_{s^{-n-2}} \!(i,j)_{e^{-r}}$ \\
where $n\geq 0, 0\leq z\leq l-2, 0\leq r\leq l$ and $j-i=l.$

\end{prop}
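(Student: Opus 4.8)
The plan is to prove the equivalence by translating the morphism condition of $\mathcal{D}$ (via Proposition 4.1) into the combinatorial crossing condition on arcs, and then to read off the crossing condition as an explicit parametrization by the shift operators $\Sigma$ (realized as the functors ${}_s\!(-)$ and $(-)_e$). The natural route is the cycle of implications $(a)\Rightarrow(c)\Rightarrow(d)\Rightarrow(a)$, exploiting the equivalence $F:\mathcal{C}\to\mathcal{D}$ established in Theorem 4.1 together with the identification $(m,n):=\Sigma^{-n}X_{n-m-2}$ and the suspension formula $(m,n)[1]=(m-1,n-1)$ recorded just after that theorem.

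First I would unwind $(a)$. By the equivalence $F$ and the suspension formula, $\mathrm{Hom}_{\mathcal{C}}((i,j),(p,q)[1])=\mathrm{Hom}_{\mathcal{D}}(x,y[1])$ where $x=\Sigma^{-j}X_{j-i-2}$ and $y[1]=\Sigma^{-(q-1)}X_{q-p-2}$. Applying Proposition 4.1, this Hom-space is nonzero exactly when $y[1]\in H(\Sigma x)$, i.e.\ when $(p,q)[1]$ lies in $F_R^{(i,j)}\cup F_L^{(i,j)}$ in the language used inside the proof of Theorem 4.1. Substituting the explicit descriptions of $F_R$ and $F_L$ and the shift $(p,q)[1]=(p-1,q-1)$, the membership condition collapses, after a short inequality manipulation, to precisely one of the two relations $i<p<j<q$ or $p<i<q<j$. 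That is the definition of $(i,j)$ and $(p,q)$ crossing, which gives $(a)\Leftrightarrow(c)$.

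It then remains to show $(c)\Leftrightarrow(d)$, and this is the genuinely combinatorial core. The functors ${}_s\!(m,n)=(m+1,n)$ and $(m,n)_e=(m,n+1)$ let me write a general arc reachable from $(i,j)$ as ${}_{s^a}\!(i,j)_{e^b}=(i+a,j+b)$. Writing $j-i=l$, I would directly check that the two crossing inequalities, when parametrized this way, force the two stated families: the case $i<p<j<q$ yields $p=i+z$ with $0\le z\le l-2$ (so that $p<j$) and $q=j+n$ with $n\ge 0$, giving $(p,q)={}_{s^z}\!(i,j)_{e^{n}}$; the case $p<i<q<j$ yields the second family $(p,q)={}_{s^{-n-2}}\!(i,j)_{e^{-r}}$ with $0\le r\le l$ and $n\ge 0$, after matching the boundary inequalities $p<i$ and $q<j$ with the negative exponents. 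Conversely, substituting either parametrized form back into the crossing inequalities and using the stated ranges verifies that the arcs cross, completing $(c)\Leftrightarrow(d)$.

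\textbf{The main obstacle} I anticipate is bookkeeping in the $(c)\Leftrightarrow(d)$ step: the two crossing configurations are not symmetric once expressed through ${}_s\!(-)$ and $(-)_e$, so the exponent ranges ($0\le z\le l-2$ versus $0\le r\le l$, and the shift by $-n-2$) must be pinned down carefully from the strict-versus-nonstrict inequalities and from the boundary-arc condition $m\le n-2$ that every genuine diagonal must satisfy. In particular I would take care that arcs degenerating to boundary arcs (identified with $0$ in $\mathcal{C}$) are correctly excluded, since these are exactly the endpoints of the allowed exponent ranges. The remaining implications are routine once the dictionary between Proposition 4.1, the suspension formula, and the shift functors is set up cleanly.
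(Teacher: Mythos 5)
Your proposal is correct and follows essentially the same route as the paper: both establish $(a)\Leftrightarrow(c)$ by transporting the Hom-space condition through the equivalence $F$ to the known description of morphisms in $\mathcal{D}$ (the paper cites Holm--J{\o}rgensen's crossing criterion where you unwind $H^{\pm}$ and $F_R, F_L$ by hand), and both obtain $(d)$ by parametrizing the resulting inequalities via the shift operators ${}_s\!(-)$ and $(-)_e$. The bookkeeping concern you flag about the exponent ranges is real --- the paper's own proof has the same off-by-one tension between the stated bounds $0\leq z\leq l-2$, $n\geq 0$ and strict crossing --- but this does not make your approach different from the paper's.
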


\begin{proof}
Let $(i,j)$ and $(p,q)$ be two arcs of $\mathcal{C}$.\\ Then we have Ext$^1((i,j),(p,q))=$Hom$((i,j),(p,q)[1])$\\
By  theorem 3.1 and [\ref{holjor}, Lemma 3.5], we have  Hom$((i,j),(p,q)[1])\neq 0$ if and only if the arcs $(i,j)$ and $(p,q)$ cross; that means,
 $(a)$ and
$(c)$ are equivalent.

       Now assume that Hom$((i,j),(p,q)[1])\neq 0$. By the definition of the morphism spaces of $\mathcal{C},$ we have $m\leq p-1\leq n-2$ and
       $n\leq q-1$, or
$m+2\leq q-1\leq n$ and $q-1\leq m$. Because $i$ and $j$ are integers, $l=j-l$ is a positive integer. If we consider the integers $n, z, r$ such
that $n\geq 0$,
$0\leq z\leq l-2$
and $0\leq r\leq l,$ then we have $(p,q)=(i+z,j+n)$ or $(p,q)=(i-n-2,j-r).$ By definition, we have ${}_{s} \!(i,j)=(i+1,j)$ and $\!(i,j)_{e}=(i,j+1),$
thus $(p,q)={}_{s^z} \!(i,j)_{e^{n}} $ or $(p,q)={}_{s^{-n-2}} \!(i,j)_{e^{-r}}$. It follows that $(a)$ and
$(d)$ are equivalent.

\end{proof}

\begin{cor}
Let $(m,n)$ be a diagonal of the infinity-gon, then there is an Auslander-Reiten triangle in $\mathcal{C}$ as
follows
$$(m,n)\longrightarrow {}_s \!(m,n)\oplus (m,n)_e\longrightarrow {}_s \!(m,n)_e\longrightarrow (m,n)[1].$$
Moreover, all Auslander-Reiten triangles of $\mathcal{C}$ are of this form.
\end{cor}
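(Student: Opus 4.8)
The plan is to derive the Auslander--Reiten triangles of $\mathcal{C}$ from those of $\mathcal{D}$ through the triangle equivalence $F\colon\mathcal{C}\longrightarrow\mathcal{D}$ established above. Indeed, the triangulated structure on $\mathcal{C}$ is precisely the one transported along $F$, and a triangle equivalence between Krull--Schmidt, Hom-finite, triangulated categories carries Auslander--Reiten triangles to Auslander--Reiten triangles; since $\mathcal{D}$ is known to possess Auslander--Reiten triangles, it suffices to write down the AR triangle of $\mathcal{D}$ ending at a given indecomposable in the standard coordinate system and then pull it back along $F$. Thus no new triangulated input is needed beyond the equivalence established above, and the content of the corollary is purely the bookkeeping of the coordinates.

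First I would recall that the Auslander--Reiten quiver of $\mathcal{D}$ is $\mathbb{Z}\mathbb{A}_{\infty}$ and that, $\mathcal{D}$ being $2$-Calabi--Yau, the Auslander--Reiten translation is $\tau=S\Sigma^{-1}=\Sigma$. In standard coordinates this reads $\tau(p,q)=(p,q)[1]=(p-1,q-1)$. Hence the AR triangle ending at an indecomposable $Z$ has the form $\tau Z\to E\to Z\to(\tau Z)[1]$. Writing $Z={}_s \!(m,n)_e=(m+1,n+1)$ for a diagonal $(m,n)$, we obtain $\tau Z=(m,n)$ and $(\tau Z)[1]=(m,n)[1]$, so that the left-hand term, the right-hand term and the final term of this triangle are precisely the three outer terms of the asserted triangle.

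It remains to identify the middle term $E$, and this is the step I expect to require the most care, since it is where the functors ${}_s$ and ${}_e$ must be matched against the arrows of $\mathbb{Z}\mathbb{A}_{\infty}$ and the boundary convention. I would argue that $E$ is the direct sum of the objects $Y$ admitting irreducible maps $(m,n)\to Y$ and $Y\to(m+1,n+1)$; using the description of $\operatorname{Hom}_{\mathcal{D}}$ recalled above (equivalently the forward-morphism structure of $\mathbb{Z}\mathbb{A}_{\infty}$), these mesh-neighbours are obtained by applying the two functors ${}_s$ and ${}_e$, giving ${}_s \!(m,n)=(m+1,n)$ and $(m,n)_e=(m,n+1)$, and the two composites $(m+1,n)\to(m+1,n+1)$ and $(m,n+1)\to(m+1,n+1)$ close up the mesh. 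The one case needing separate attention is the rim $n=m+2$: there ${}_s \!(m,n)=(m+1,m+2)$ is a boundary arc, hence zero in $\mathcal{C}$, so the displayed middle term degenerates to the single object $(m,n)_e$, in agreement with the fact that the boundary vertices of $\mathbb{Z}\mathbb{A}_{\infty}$ carry a mesh with only one middle term.

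Finally, for the ``moreover'' clause I would note that every indecomposable of $\mathcal{C}$ is a diagonal $(p,q)$ with $p\le q-2$; setting $m=p-1$ and $n=q-1$ exhibits it as ${}_s \!(m,n)_e$ for a genuine diagonal $(m,n)$, since $p\le q-2$ forces $m\le n-2$. As each indecomposable is the right-hand term of exactly one Auslander--Reiten triangle and each Auslander--Reiten triangle ends at some indecomposable, every Auslander--Reiten triangle of $\mathcal{C}$ is of the displayed form, which completes the plan.
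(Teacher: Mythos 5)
Your proof is correct and follows essentially the same route as the paper: both transport the known Auslander--Reiten triangles of $\mathcal{D}$ through the equivalence $F$ and read them off in the $(m,n)$ coordinates, with the ``moreover'' clause coming from the uniqueness of the AR triangle ending at a given indecomposable. The only differences are cosmetic --- you rebuild the mesh from $\tau=\Sigma$ and the irreducible-map neighbours rather than quoting the explicit triangle on the $\Sigma^{-n}X_{r}$ from J{\o}rgensen --- plus your explicit treatment of the degenerate case $n=m+2$, where ${}_s\!(m,n)$ is a boundary arc and the middle term loses a summand, a detail the paper's proof leaves implicit.
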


\begin{proof}
It is shown in [\ref{pj}] that the following triangle
$$\Sigma^{-n}X_{u}\longrightarrow \Sigma^{-n}X_{v}\oplus \Sigma^{-n}X_{n-m-1}\longrightarrow \Sigma^{-n-1}
X_{u}\longrightarrow \Sigma^{-n+1}X_{u}$$
is an Auslander-Reiten triangle in $\mathcal{D}$, where $u=n-m-2, v=n-m-3$. By using the equivalence $F$ of
Theorem3.1, we have
$$(m,n)\longrightarrow (m+1,n)\oplus (m,n+1)\longrightarrow (m,n+1)\longrightarrow (m-1,n-1).$$ That is
$$(m,n)\longrightarrow {}_s \!(m,n)\oplus (m,n)_e\longrightarrow {}_s \!(m,n)_e\longrightarrow (m,n)[1].$$
Assuming now that
$$(m,n)\longrightarrow \bigoplus\limits_{i=1}^{n} (m_i,n_i)\longrightarrow (p,q)\longrightarrow (m,n)[1]$$
is an Auslander-Reiten triangle of $\mathcal{C}.$ Since $F$ is an equivalence of categories,
$$F(m,n)\longrightarrow \bigoplus\limits_{i=1}^{n} F(m_i,n_i)\longrightarrow (p,q)\longrightarrow F((m,n)[1])$$
is an Auslander-Reiten triangle of $\mathcal{D};$
that is
$$\Sigma^{-n}X_{n-m-2}\longrightarrow \bigoplus\limits_{i=1}^{l} \Sigma^{-n_i}X_{n_i-m_i-2}\longrightarrow
\Sigma^{-q}X_{q-p-2}\longrightarrow \Sigma^{-n+1}X_{n-m-2}$$ is an Auslander-Reiten triangle of $\mathcal{D}$.
 The form of the Auslander-Reiten triangle
of $\mathcal{D}$ is well known; by identification, we have $\Sigma^{-q}X_{q-p-2}=\Sigma^{-n-1}X_{n-m-2}$. There
exist
$r,s$ with $1\leq r,s\leq l$ such that $F(m_r,n_r)=\Sigma^{-n}X_{n-m-3}, F(m_s,n_s)=\Sigma^{-n}X_{n-m-1}$ and
$F(m_i,n_i)=0,$
for all $i$ different from $r$ and $s.$ This completes the proof of our assertion.

\end{proof}

Let $\mathcal{E}$ be a subcategory of $\mathcal{C}$. Perpendicular subcategories of $\mathcal{E}$ are defined by:
$\mathcal{E}^{\perp}$ is the set of all objects $x\in \mathcal{C}$ such that Hom$_{\mathcal{C}}(b,x)=0$ for $b\in \mathcal{B}\}$ and
${}^{\bot} \!\mathcal{E}$ is the set of all objects $x\in \mathcal{C}$ such that Hom$_{\mathcal{C}}(x,b)=0$ for $b\in \mathcal{B}\}$
A subcategory $\mathcal{H}$ of $\mathcal{C}$ is a weak cluster tilting if it satisfies $(\Sigma^{-1}
\mathcal{H})^{\perp}={}^{\bot} \!(\Sigma\mathcal{H})$.

        For a weak cluster tilting subcategory $\mathcal{H}$ of $\mathcal{C}$ we can consider the set $H$ of
 indecomposable objects of $\mathcal{H}$, whence $ \mathcal{H}$$=$add$H.$ The following corollary is a geometric interpretation of the Theorem 4.3 in [\ref{holjor}].

\begin{cor}
Let $T$ be a set of arcs of $\mathbb{S}$. Then $T$ is a triangulation of $\mathbb{S}$ if and only if $\mathcal{T}$$=$add$T$ is a weak cluster tilting subcategory of $\mathcal{C}.$
\end{cor}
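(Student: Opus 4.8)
The plan is to deduce the statement from the equivalence $F\colon\mathcal{C}\longrightarrow\mathcal{D}$ constructed above, together with the characterization of triangulations of $\mathbb{S}$ in [\ref{holjor}, Theorem 4.3], by transporting the weak cluster tilting condition across $F$. First I would recall that on indecomposables $F$ is the bijection $F_{0}(m,n)=\Sigma^{-n}X_{n-m-2}$, which is exactly the standard-coordinate identification of arcs of $\mathbb{S}$ with the indecomposable objects of $\mathcal{D}$ used by Holm and J{\o}rgensen. Consequently a set of arcs $T$ is carried by $F$ to the set of indecomposables $\{\Sigma^{-n}X_{n-m-2}\mid (m,n)\in T\}$, and $\mathcal{T}=\mathrm{add}\,T$ satisfies $F(\mathcal{T})=\mathrm{add}\,F(T)$. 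It therefore suffices to prove that $\mathrm{add}\,T$ is weak cluster tilting in $\mathcal{C}$ if and only if $\mathrm{add}\,F(T)$ is weak cluster tilting in $\mathcal{D}$, and then to apply [\ref{holjor}, Theorem 4.3].

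Next I would verify that $F$ transports the defining orthogonality equality. Since the triangulated structure on $\mathcal{C}$ is the one transported from $\mathcal{D}$ along $F$, so that the suspension $(m,n)[1]=(m-1,n-1)$ corresponds to $\Sigma$, the functor $F$ intertwines the suspensions, $F\circ(-)[1]\cong\Sigma\circ F$, and likewise $F\circ(-)[-1]\cong\Sigma^{-1}\circ F$. Because $F$ is full and faithful, $\mathrm{Hom}_{\mathcal{C}}(a,b)=0$ exactly when $\mathrm{Hom}_{\mathcal{D}}(Fa,Fb)=0$; combined with essential surjectivity and the bijectivity of $F_{0}$ on indecomposables, this gives $F\bigl(\mathcal{E}^{\perp}\bigr)=\bigl(F\mathcal{E}\bigr)^{\perp}$ and $F\bigl({}^{\bot}\mathcal{E}\bigr)={}^{\bot}\bigl(F\mathcal{E}\bigr)$ for every subcategory $\mathcal{E}$ of $\mathcal{C}$. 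Applying these identities to $\mathcal{E}=\Sigma^{-1}\mathcal{T}$ and $\mathcal{E}=\Sigma\mathcal{T}$ shows that the equality $(\Sigma^{-1}\mathcal{T})^{\perp}={}^{\bot}(\Sigma\mathcal{T})$ holds in $\mathcal{C}$ precisely when $(\Sigma^{-1}F\mathcal{T})^{\perp}={}^{\bot}(\Sigma F\mathcal{T})$ holds in $\mathcal{D}$; that is, $\mathcal{T}$ is weak cluster tilting in $\mathcal{C}$ if and only if $F(\mathcal{T})$ is weak cluster tilting in $\mathcal{D}$.

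Finally I would combine the two reductions: by [\ref{holjor}, Theorem 4.3] the set $T$ is a triangulation of $\mathbb{S}$ if and only if $\mathrm{add}\,F(T)$ is a weak cluster tilting subcategory of $\mathcal{D}$, and by the previous paragraph this is equivalent to $\mathcal{T}=\mathrm{add}\,T$ being a weak cluster tilting subcategory of $\mathcal{C}$, which is the claim. I expect the only genuinely delicate point to be the compatibility underlying the second paragraph, namely that $F$ intertwines the shift functors up to natural isomorphism and that the arc--object identification used in [\ref{holjor}] coincides with $F_{0}$; these are exactly the facts that ensure that the a priori distinct notions of weak cluster tilting in $\mathcal{C}$ and in $\mathcal{D}$ correspond under $F$. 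Once these compatibilities are recorded, everything else is a formal transport of the orthogonality condition along the equivalence.
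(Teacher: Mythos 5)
Your proposal is correct and follows essentially the same route as the paper: transport the weak cluster tilting condition across the equivalence $F$ of Theorem 4.1 and invoke [\ref{holjor}, Theorem 4.3]. In fact your write-up is more careful than the paper's, which states the transport step without verifying that $F$ intertwines the suspensions and preserves the perpendicular subcategories — the compatibilities you rightly flag as the delicate point.
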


\begin{proof}
Let $T$ be a triangulation of $\mathbb{S}$. Let $U$ be the image of $T$ by the equivalence of Theorem 3.1 and [\ref{holjor}, Theorem 4.3],
then $\mathcal{U}$$=$add$U$ is a weak cluster tilting subcategory of $\mathcal{D}$. Hence $\mathcal{T}$$=$add$T$ is a weak cluster tilting
subcategory of $\mathcal{C}$. Conversely, according to the equivalence of Theorem 3.1, if $\mathcal{U}$ is a cluster tilting subcategory of $\mathcal{C}$, then the set of distinct arcs
$U$ such that $\mathcal{U}$$=$add$U$ is a triangulation of $\mathbb{S}.$

\end{proof}

      \addcontentsline{toc}{chapter}{Bibliographie}

$$   $$

D\'{e}partement de Mathématiques, Universit\'{e} de Sherbrooke
2500,boul. de l'Universit\'{e},
Sherbrooke, Qu\'{e}bec, J1K2R1
Canada
.\\
E-mail address: ndoune.ndoune@usherbrooke.ca
\end{document}